\documentclass[12pt, a4paper]{amsart}

\usepackage[margin=2.5cm, headheight=13pt, headsep=13pt]{geometry}
\usepackage{newtxtext}
\usepackage[subscriptcorrection]{newtxmath}
\usepackage[dvipsnames]{xcolor}
\usepackage{hyperref}
\usepackage[sort]{cleveref}
\hypersetup{
  colorlinks,
  citecolor=blue,
  filecolor=magenta,
  linkcolor=magenta,
  urlcolor=magenta
}

\usepackage[square,numbers]{natbib}
\usepackage{graphicx}
\usepackage{multicol,multirow}
\usepackage{amsmath,amsfonts}
\usepackage{amsthm}
\usepackage{rotating}
\usepackage{appendix}
\usepackage{ifpdf}
\usepackage[T1]{fontenc}

\usepackage{tikz}
\usepackage{enumitem}

\usepackage[subrefformat=parens, labelfont=up]{subcaption}

\usetikzlibrary{decorations.pathreplacing}

\newtheorem{theorem}{Theorem}[section]
\newtheorem{lemma}[theorem]{Lemma}
\newtheorem{definition}[theorem]{Definition}
\newtheorem{proposition}[theorem]{Proposition}
\theoremstyle{definition}
\newtheorem{notation}[theorem]{Notation}
\newtheorem{remark}[theorem]{Remark}
\newtheorem{example}[theorem]{Example}

\newtheorem{package}[theorem]{Package}
\newtheorem{corollary}[theorem]{Corollary}
\numberwithin{equation}{section}

\newcommand{\GG}{\mathcal{G}}
\newcommand{\HH}{\mathcal{H}}
\newcommand{\R}{\mathbb{R}}

\newcommand{\Q}{\mathbb{Q}}

\newcommand{\cast}{c^\ast}

\newcommand{\precdot}{\prec\hspace{-4pt}\cdot}

\title{Posets of trek polynomials for directed trees}
\author{Marina Garrote-L\'{o}pez, Nataliia Kushnerchuk and Liam Solus}

\address[M.~Garrote-L\'{o}pez]{ Universitat Pompeu Fabra, Barcelona, Spain}
\email{marina.garrote@upf.edu}

\address[N.~Kushnerchuk]{Aalto University, Espoo, Finland}
\email{nataliia.kushnerchuk@aalto.fi}

\address[L.~Solus]{Department of Mathematics, KTH Royal Institute of Technology, Stockholm, Sweden}
\email{solus@kth.se}
\date{\today}

\keywords{%
  partially ordered set, 
  toric, 
  implicitization,
  linear span,
  structural identifiability, 
  graphical model, 
  trek polynomial, 
  colored graph, 
  directed acyclic graph}

\begin{document}

\begin{abstract}
When a variety $\mathcal{V}_\varphi$ equals the image of a polynomial map $\varphi$ whose coordinate functions are combinatorial generating polynomials (i.e.~polynomials enumerating combinatorial objects), the geometry of $\mathcal{V}_\varphi$ reflects identities satisfied by the generating polynomials. 
The resulting interplay between combinatorics and algebraic geometry can be used to answer questions about $\mathcal{V}_\varphi$. 
A recent technique proposes to do so using a partially ordered set (poset) $P_\varphi$ defined via the coefficient vectors of the polynomials defining $\varphi$. 
This paper characterizes the poset $P_\varphi$ when the generating polynomials defining $\varphi$ enumerate subgraphs of a directed tree known as treks. 
The characterization is used to compute the linear span of $\mathcal{V}_\varphi$, prove it is toric and deduce a basis for its vanishing ideal. 
It is also shown that this poset of trek polynomials for a directed tree is a so-called $\pi$-system if and only if the tree satisfies a property characterized via Stanley's P-partitions. 
%
% It is also shown that the poset of trek polynomials for a directed tree is also reveals that the ideal of the variety has a particularly nice form when the directed tree satisfies a property characterized via Stanley's theory of P-partitions. 
As an additional consequence, it is shown that the varieties for two distinct directed trees intersect in a strictly lower-dimensional variety. 
This solves an instance of the structural identifiability problem in the graphical models program from statistics. 
\end{abstract}

\maketitle

\section{Introduction}
This paper studies the algebraic geometry of a family of affine algebraic varieties that are specified as the Zariski closure of the image of an open set $\Theta\subseteq \mathbb{R}^n$ under a polynomial map
\begin{equation}
    \label{eqn: polynomial map}
    \begin{split}
    \varphi: \mathbb{R}^n \to \mathbb{R}^m; \qquad &\theta = (\theta_1,\ldots, \theta_n) \mapsto (f_1(\theta),\ldots, f_m(\theta)) = (x_1,\ldots, x_m) =x,\\
    &\textrm{for $f_1,\ldots, f_m\in \mathbb{Z}_{\geq 0}[\theta_1,\ldots, \theta_n]$},
    \end{split}
\end{equation}
where $\mathbb{Z}_{\geq 0}[\theta_1,\ldots, \theta_n]$ denotes polynomials in the variables $\theta_1,\ldots, \theta_n$ with nonnegative integral coefficients.
We denote a variety defined in this way as $\mathcal{V}_\varphi = \overline{\varphi(\Theta)} = \overline{\varphi(\mathbb{R}^n)}$.

Such varieties are ubiquitous in applied algebra, where the map $\varphi$ may parameterize a geometric realization of a statistical model \cite{boege2024colored,drton2018algebraic,sullivant2023algebraic}, subfamilies of polynomial neural networks \cite{kileel2019expressive, kubjas2024geometry}, multiview varieties for certain camera configurations \cite{maxim2020euclidean}, or subvarieties of general determinantal varieties \cite{garrote2026unirational} related to the study of low rank matrix factorizations \cite{gillis2020nonnegative}.  
This paper focuses on a family of varieties satisfying~\eqref{eqn: polynomial map} associated to directed trees, where the coordinate functions $f_1,\ldots, f_m$ are enumerating certain subgraphs of the tree known as treks. 

The restriction in~\eqref{eqn: polynomial map} that the coordinate functions $f_1,\ldots, f_m$ have only nonnegative integral coefficients is particularly natural when $f_1,\ldots, f_m$ are \emph{combinatorial generating polynomials}, meaning that they enumerate some combinatorial objects associated to the monomials $\theta^\alpha = \theta_1^{\alpha_1}\cdots\theta_n^{\alpha_n}$, $\alpha = (\alpha_1,\ldots, \alpha_n)\in \mathbb{Z}_{\geq 0}^n$. 
When a monomial $\theta^\alpha$ has a nonzero coefficient in multiple $f_i$, this induces relations on the generating polynomials $f_1,\ldots, f_m$ that are reflected in the algebra and geometry of the variety $\mathcal V_\varphi$. 
To deduce the geometric consequences of the combinatorial relations among the $f_1,\ldots, f_m$, \cite{garrote2026unirational} recently defined a partially ordered set $P_\varphi$ associated to the map~\eqref{eqn: polynomial map}, and showed that a characterization of $P_\varphi$ allows one to deduce several algebraic and geometric features of $\mathcal V_\varphi$. 
Specifically, for a family of varieties
\begin{equation}
    \label{eqn: family}
    \mathcal{F} = \{\mathcal{V}_{\varphi_i} = \overline{\varphi_i(\mathbb{R}^{n_i})}\subseteq \mathbb{R}^{m_i} : \varphi_i \textrm{ satisfies~\eqref{eqn: polynomial map} for } f_{i,1},\ldots, f_{i,m_i}\in \mathbb{Z}_{\geq 0}[\theta_{i,1},\ldots, \theta_{i,n_i}]\},
\end{equation}
the methods of \cite{garrote2026unirational} allow for purely combinatorial solutions to the following package of problems.

\begin{package}
    \label{prob: package I}
    Let 
    \[
    \mathcal{F} =\{\mathcal{V}_{\varphi_i} = \overline{\varphi_i(\Theta_i)} : \varphi_i:\mathbb{R}^{n_i}\to \mathbb{R}^{m_i} \textrm{ a rational map defined on $\Theta_i\subseteq\mathbb{R}^{n_i}$, } i\in\mathbb{Z}_{\geq 0}\}
    \]
    be a family of unirational varieties.  
    For all $i\in \mathbb{Z}_{\geq 0}$,
    \begin{enumerate}
        \item\label{prob: linear span} (linear span)  Find a basis for the linear span of $\mathcal{V}_{\varphi_i}$. 
        \item\label{prob: toric} (toric reparameterization) Identify a monomial map $\varphi_i':\mathbb{R}^{n_i'}\to\mathbb{R}^{m_i}$ such that $\mathcal{V}_\varphi$ is isomorphic to $\mathcal{V}_{\varphi'}$ via an invertible change of coordinates on $\mathbb{R}^{m_i}$. 
        \item\label{prob: implicitization} (implicitization) Find a basis for the vanishing ideal
        \[
        I_{\varphi_i} = \langle f\in\mathbb R[x_1,\ldots, x_{m_i}]: f(x) = 0 \textrm{ for all } x\in \mathcal{V}_{\varphi_i}\rangle. 
        \]
        \item\label{prob: distinguishability} (distinguishability) Characterize when $\mathcal V_{\varphi_i}\cap\mathcal{V}_{\varphi_j}\subseteq\mathbb{R}^{m_i}$ has dimension strictly less than $\textrm{min}(\dim(\mathcal{V}_{\varphi_i}),\dim(\mathcal{V}_{\varphi_j}))$.
    \end{enumerate}
\end{package}

The interrelated problems in Package~\ref{prob: package I} are the focus of several different areas of applied algebra, since closed-form solutions to all four problems provide a foundational understanding of the geometry of each variety in $\mathcal{F}$, a characterization of their relative geometry, and basic tools for applications. 

For instance, the linear forms obtained in Problem~\ref{prob: package I}\eqref{prob: linear span} provide null hypotheses for simple hypothesis tests when the varieties in $\mathcal{F}$ arise from a family of statistical models \cite{sturma2024testing}. 
Alternatively, when $\mathcal{F}$ is a collection of neurovarieties \cite{kubjas2024geometry}, the linear forms are certificates for the neurovariety to be non-space-filling, which is relevant in algebraic machine learning. 

Similarly, a positive answer to the toric reparametrization problem in Problem~\ref{prob: package I}\eqref{prob: toric} means that the geometry of the variety $\mathcal{V}_\varphi\in \mathcal{F}$ can be read from an associated convex polytope.  
This provides a combinatorial (polyhedral) framework for studying fundamental features of $\mathcal{V}_\varphi$, such as its dimension, degree, and its vanishing ideal. 
This perspective has, for example, led to an industry in algebraic statistics aimed at deciding when algebraic statistical models admit a positive solution to Problem~\ref{prob: package I}\eqref{prob: toric}, see \cite{coons2023symmetrically, duarte2020equations, duarte2025algebraic, gorgen2022staged, kahle2025efficiently, maraj2026symmetry, misra2021gaussian, misra2022directed,nicklasson2023toric} -- to name a few. 

The implicitization problem (Problem~\ref{prob: package I}\eqref{prob: implicitization}) is perhaps the most basic problem to be solved for unirational varieties. 
However, it is generally difficult to solve for an entire family $\mathcal{F}$ via general techniques \cite{cox1997ideals}, and computational methods often produce solutions lacking combinatorial interpretability. 
Solutions to Problems~\ref{prob: package I}~\eqref{prob: linear span} and~\eqref{prob: toric} typically help resolve these issues. 

Finally, a solution to Problem~\ref{prob: package I}\eqref{prob: distinguishability} clarifies the relative geometry of the varieties in $\mathcal{F}$.
This is particularly relevant to applications in statistics, where a complete solution yields characterizations of model equivalence for graphical models \cite{boege2024colored, duarte2025algebraic, duarte2026representation, wu2023partial}. 

The recent paper \cite{garrote2026unirational} gives a purely combinatorial technique that can be used to obtain the complete Package~\ref{prob: package I} for any family of varieties $\mathcal{F}$ satisfying~\eqref{eqn: family}.  
For $\mathcal{V}_\varphi\in\mathcal{F}$, the technique defines a poset $P_\varphi$ using the coefficient vectors of the coordinate functions of $\varphi$ (see Section~\ref{sec: preliminaries}). 
By characterizing the posets $\{P_\varphi : \mathcal{V}_\varphi\in\mathcal{F}\}$ and their Möbius functions, the problems in Package~\ref{prob: package I} may be completed. %solutions to all four problems may be obtained. 
The contents of this paper provide a first family $\mathcal{F}$ for which the complete collection of posets $\{P_\varphi : \mathcal{V}_\varphi\in\mathcal{F}\}$ is characterized, along with their Möbius functions. 

The varieties studied in this paper are the Zariski closures of Gaussian graphical models associated to directed trees (defined in Section~\ref{sec: varieties of colored DAGs}). 
Their coordinate functions $f_1,\ldots, f_m$ are combinatorial generating polynomials enumerating subgraphs of the directed tree, known as \emph{treks}.  
Using the combinatorial properties of these polynomials, the poset $P_\varphi$ for each directed tree is characterized in Subsection~\ref{subsec: poset characterization}.  
A closed-form formula for the Möbius function is given in Subsection~\ref{subsec: möbius function}, and, in Subsection~\ref{subsec: pi-graphs and P-partitions}, a characterization of when the posets are so-called \emph{$\pi$-systems} is given using Stanley's $P$-partitions.  
These combinatorial observations are then applied to answer Problem~\ref{prob: package I}\eqref{prob: linear span} in Section~\ref{subsec: min lin space of constant directed tree}, Problem~\ref{prob: package I}\eqref{prob: toric} in Section~\ref{subsec: toric reparam constant directed tree}, Problem~\ref{prob: package I}\eqref{prob: implicitization} in Section~\ref{subsec: implicitization constant directed tree} and Problem~\ref{prob: package I}\eqref{prob: distinguishability} in Section~\ref{subsec: lin model dist}. 
In particular, the results in this paper demonstrate how the poset construction of \cite{garrote2026unirational} can be used to deduce purely combinatorial solutions to Package~\ref{prob: package I} for a family of varieties $\mathcal{F}$ satisfying~\eqref{eqn: family}.
Since the specific family studied in this paper arises in statistics, we apply these results to prove that the family of associated statistical models satisfies the desirable statistical property known as structural identifiability (Section~\ref{subsec: lin model dist}).

\section{Preliminaries}
\label{sec: preliminaries}
This section recalls the poset construction $P_\varphi$ for a variety $\mathcal{V}_\varphi$ defined by a polynomial map $\varphi$ satisfying~\eqref{eqn: polynomial map}.  
Let 
\[
\mathcal{T} = \left\{\theta^\alpha : f_i = \sum_{\beta\in\mathbb{Z}_{\geq 0}^n}c_\beta\theta^\beta \textrm{ with } c_\alpha\neq 0 \textrm{ for some } i\in[m]\right\}
\]
denote the set of monomials supporting the coordinate functions $f_1,\ldots, f_m\in\mathbb{Z}_{\geq 0}[\theta_1,\ldots, \theta_n]$. 
For $i\in[m]$, we let $c_i \in\mathbb{Z}_{\geq 0}^{\mathcal{T}}$ denote the coefficient vector of $f_i$ as a vector in $\mathbb{R}^{\mathcal{T}}$, and $M_\varphi$ denote the  $m\times |\mathcal{T}|$ matrix with rows given by $c_1,\ldots, c_m$. 
Define the monomial map 
\[
\phi_{\mathcal{T}}: \mathbb{R}^n\to \mathbb{R}^{\mathcal{T}}; \qquad \theta \mapsto (\theta^\alpha: \theta^\alpha\in\mathcal{T}). 
\]
Throughout, we treat $\phi_{\mathcal{T}}(\theta)$ as a column vector.  
It follows that $\varphi(\theta) = M_\varphi\phi_{\mathcal{T}}(\theta)$ for all $\theta\in\mathbb{R}^n$. 

Given vectors $v_1,\ldots, v_k\in\mathbb{Z}_{\geq 0}^{\mathcal{T}}$ where $v_i$ has coordinate $v_{i,\tau}$ indexed by $\tau\in \mathcal{T}$, we define the \emph{meet} of $v_1,\ldots, v_k$ as
\[
\bigwedge_{i\in[k]}v_i = (\min(v_{1,\tau},\ldots,v_{k,\tau}) : \tau \in\mathcal{T})\in\mathbb{Z}_{\geq 0}^{\mathcal{T}}, 
\]
and the \emph{join} of $v_1,\ldots, v_k$ as
\[
\bigvee_{i\in[k]}v_i = (\max(v_{1,\tau},\ldots,v_{k,\tau}) : \tau \in\mathcal{T})\in\mathbb{Z}_{\geq 0}^{\mathcal{T}}. 
\]
The poset $P_\varphi$ is then defined as the poset with ground set $\{\bigwedge_{i\in S}c_i : \emptyset\neq S\subseteq [m]\}$ with partial order $\preceq$ being the coordinate-wise dominance order on $\mathbb{Z}_{\geq 0}^{\mathcal T}$.  

The poset $P_\varphi$ is a meet-semilattice \cite[Proposition 4.2]{garrote2026unirational}, but it need not be a lattice.  
In particular, $\bigvee_{i\in S}c_i$ may not belong to $P_\varphi$ for every $S\subseteq [m]$.  
However, when $\bigvee_{i\in S}c_i\in P_{\varphi}$, it is the join in $P_\varphi$ of $\{c_i : i\in S\}$. 

Given $s\in P_\varphi$ the \emph{residue vector} of $s$ is defined as 
\[
r(s) = s - \bigvee_{s'\prec s}s'. 
\]
We associate two polynomials in $\mathbb{Z}_{\geq 0}[\theta_1,\ldots, \theta_n]$ to each $s\in P_\varphi$.  
These are the polynomials $g_s(\theta) = s\cdot\phi_{\mathcal{T}}(\theta)$, whose coefficient vector is exactly $s$, and the \emph{residue polynomial} $f_s(\theta) = r(s)\cdot\phi_{\mathcal{T}}(\theta)$, whose coefficient vector is the residue vector for $s$. 
In \cite[Theorem 4.4]{garrote2026unirational}, the following Möbius inversion formula is deduced: 
\begin{theorem}
\cite[Theorem 4.4]{garrote2026unirational}
    \label{thm: mobius}
    For all $s\in P_\varphi$
    \[
    r(s) = \sum_{s'\preceq s}\mu(s',s)s', \qquad \textrm{ and } \qquad f_s(\theta) = \sum_{s'\preceq s}\mu(s',s)g_{s'}(\theta).
    \]
    where $\mu(s',s)$ denotes the \emph{Möbius function} of the poset $P_{\varphi}$.
\end{theorem}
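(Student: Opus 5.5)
The plan is to prove the two displayed identities by Möbius inversion on the finite poset $P_\varphi$. The main step is the \emph{summation formula}
\[
s = \sum_{s'\preceq s} r(s') \qquad \textrm{for all } s\in P_\varphi .
\]
Once this holds, the Möbius inversion formula for the finite poset $P_\varphi$ turns it, coordinate by coordinate, into $r(s) = \sum_{s'\preceq s}\mu(s',s)s'$. This applies to the vector-valued functions $s\mapsto s$ and $s\mapsto r(s)$ from $P_\varphi$ to $\mathbb{Z}^{\mathcal T}$. The polynomial identity then follows by taking the dot product of both sides with $\phi_{\mathcal T}(\theta)$. This map is linear in the coefficient vector, and by definition it sends $s'$ to $g_{s'}(\theta)$ and $r(s)$ to $f_s(\theta)$.

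The one structural fact I need is that $P_\varphi$ is closed under coordinatewise minima. If $s_1=\bigwedge_{i\in S_1}c_i$ and $s_2=\bigwedge_{i\in S_2}c_i$, then $s_1\wedge s_2=\bigwedge_{i\in S_1\cup S_2}c_i\in P_\varphi$; this is the meet-semilattice property of \cite[Proposition 4.2]{garrote2026unirational}. I also use the convention that the join over the empty set is the zero vector, so that $r(s)=s$ for minimal $s$.

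To prove the summation formula, fix $s\in P_\varphi$ and a coordinate $\tau\in\mathcal T$, and count levels. For each integer $1\le k\le s_\tau$, let
\[
A_k=\{s'\in P_\varphi : s'\preceq s,\ s'_\tau\ge k\}.
\]
This set is nonempty because it contains $s$. Its coordinatewise minimum $m_k=\bigwedge_{s'\in A_k}s'$ lies in $P_\varphi$ by closure under meets. It satisfies $m_k\preceq s$ and $(m_k)_\tau\ge k$, so $m_k\in A_k$ and $m_k$ is the unique minimum of $A_k$.

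Next I claim that, for each $s'\preceq s$,
\[
r(s')_\tau = \#\{k\in[s_\tau] : m_k = s'\}.
\]
Indeed, $m_k=s'$ holds if and only if $k\le s'_\tau$ and no $s''\prec s'$ has $s''_\tau\ge k$. The second condition says exactly that $k>\max_{s''\prec s'}s''_\tau=(\bigvee_{s''\prec s'}s'')_\tau$. So the admissible $k$ form the integer interval $\bigl((\bigvee_{s''\prec s'}s'')_\tau,\ s'_\tau\bigr]$, whose length is $r(s')_\tau$. In particular, residue vectors are nonnegative.

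Summing this count over all $s'\preceq s$ counts every $k\in[s_\tau]$ exactly once, since each $k$ has exactly one minimum $m_k\preceq s$. This gives $\sum_{s'\preceq s}r(s')_\tau=s_\tau$, which is the summation formula in coordinate $\tau$.

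The main obstacle is this level-counting step, specifically the existence of a unique minimum of $A_k$. This is precisely where closure of $P_\varphi$ under meets is needed. Without it, several incomparable minimal elements could share the same level $k$, and the residues would overcount. The remaining steps, namely Möbius inversion and applying the linear map $v\mapsto v\cdot\phi_{\mathcal T}(\theta)$, are routine.
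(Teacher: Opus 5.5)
This paper does not prove the statement itself; it quotes it from \cite[Theorem 4.4]{garrote2026unirational}, so there is no in-paper argument to compare yours with. Your proof is correct and complete.

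The key step holds up. Since $m_k$ is the minimum of $A_k$, every $s'\in A_k$ satisfies $m_k\preceq s'$. Hence $m_k=s'$ exactly when $s'\in A_k$ and no $s''\prec s'$ lies in $A_k$. Because $s''\prec s'$ forces $s''_\tau\le s'_\tau$, the fibre of $k\mapsto m_k$ over $s'$ is an integer interval of length $r(s')_\tau$. Summing these fibre sizes over $s'\preceq s$ gives $s_\tau$. Möbius inversion on the finite poset $P_\varphi$, applied to $\mathbb{Z}^{\mathcal T}$-valued functions, followed by pairing with $\phi_{\mathcal T}(\theta)$, then finishes the argument.

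A common alternative route works only with the lower covers $s_1,\ldots,s_q$ of $s$. Since $\bigvee_{s'\prec s}s'=\bigvee_{i}s_i$, max--min inclusion--exclusion gives $r(s)=\sum_{J\subseteq[q]}(-1)^{|J|}\bigwedge_{j\in J}s_j$, with the empty meet equal to $s$. Every interval $[t,s]$ in the meet-semilattice $P_\varphi$ is a finite lattice, so Rota's crosscut theorem identifies the total coefficient of $t$ with $\mu(t,s)$.

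That route expresses $\mu$ directly through meets of lower covers, which is the shape of the Möbius function the paper later computes for directed trees. Your route avoids the crosscut theorem and uses only finiteness and closure of $P_\varphi$ under meets. It also gives $r(s')_\tau$ a combinatorial meaning: the number of levels in coordinate $\tau$ first attained at $s'$. In particular, residue vectors are nonnegative integer vectors.
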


Let 
\[
\hat\varphi: \mathbb{R}^n\to \mathbb{R}^{P_\varphi}; \quad \theta \mapsto (g_{\wedge_{i\in S}c_i}(\theta) : \emptyset\neq S\subseteq [m]) = (z_S : \emptyset\neq S\subseteq [m]),
\]
denote the extension of the map $\varphi$ into the (possibly) higher-dimensional space $\mathbb{R}^{P_\varphi}$, which has one coordinate $z_S$ for every nonempty subset of $\{c_1,\ldots, c_m\}$.  
Note, in particular, that the coordinates indexed by the singleton subsets $S = \{c_i\}$ are specified by the coordinate functions $f_i$ of $\varphi$, while all other coordinates of $\hat\varphi(\theta)$ are specified by polynomials in $\mathbb{Z}_{\geq 0}[\theta_1,\ldots, \theta_n]$ whose coefficient vectors are the remaining elements of $P_\varphi$. 
When the residue vector $r(s)$ equals $0$, we have $f_s(\theta) = 0$ and $g_s(\theta) = z_S$ for any $S$ satisfying $s = \bigwedge_{i\in S}c_i$.  
The Möbius inversion formula in Theorem~\ref{thm: mobius} implies that the following polynomial, known as a \emph{linear $P_{\hat\varphi}$-invariant} belongs to the vanishing ideal $I_{\hat\varphi}\subseteq \mathbb{R}[P_\varphi] :=\mathbb{R}[z_S : \emptyset\neq S\subseteq [m]]$:
\[
\sum_{s'\preceq s}\mu(s',s)z_{S'}.
\]

% In \cite{garrote2026unirational}, it is shown that the $(P_\varphi\times \mathcal{T})$ matrix $F_{\hat\varphi}$ whose rows (indexed by the nonempty $S\subseteq \{c_1,\ldots, c_m\}$) are the residue vectors $r(\wedge_{i\in S}c_i)$, is row equivalent to the matrix $M_{\hat\varphi}$. 
Let $F_{\hat\varphi}$ be the matrix whose rows are the residue vectors $r\big(\bigwedge_{i\in S}c_i\big)$ for all nonempty subsets $S\subseteq [m]$.  
In \cite{garrote2026unirational}, it is shown that $F_{\hat\varphi}$ is row equivalent to $M_{\hat\varphi}$, where $M_{\hat\varphi}$ is the coefficient matrix of $\hat\varphi$ (e.g.~the rows of $M_{\hat\varphi}$ are the meet vectors $\bigwedge_{i\in S}c_i$).
Since $\ker(M_{\hat\varphi}^t)$ defines the linear span of $\mathcal{V}_\varphi$,
this observation allows for the following results.

\begin{theorem}\cite[Theorem 4.28]{garrote2026unirational}
    \label{thm: linear span combinatorially}
    Suppose that the nonzero residue vectors $r(s)$ for $s\in P_\varphi$ are linearly independent.  Then the linear span of $\mathcal{V}_{\hat\varphi}$ is the zero locus of the following linear forms
    \begin{enumerate}
        \item $z_S - z_{S'}$ such that $\bigwedge_{i\in S}c_i = \bigwedge_{i\in S'}c_i$, and 
        \item the linear $P_{\hat\varphi}$-invariants $\sum_{s'\preceq s}\mu(s',s)z_{S'}$ for $r(s) = 0$.
    \end{enumerate}
    Moreover, the linear span of $\mathcal{V}_\varphi$ is given by eliminating the variables $z_{S}$ from the system defined by~(1) and~(2) for every $S$ a non-singleton set. 
\end{theorem}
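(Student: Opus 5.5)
The plan is to work entirely with the factorization $\hat\varphi(\theta) = M_{\hat\varphi}\phi_{\mathcal T}(\theta)$. The first step is to identify the linear span of $\mathcal V_{\hat\varphi}$ with the column space of $M_{\hat\varphi}$. The monomials in $\mathcal T$ are distinct, so they are linearly independent as functions on $\mathbb R^n$. Hence the vectors $\phi_{\mathcal T}(\theta)$, $\theta\in\mathbb R^n$, span $\mathbb R^{\mathcal T}$. It follows that a linear form $\lambda\cdot z$ vanishes on $\mathcal V_{\hat\varphi}$ exactly when $\lambda^t M_{\hat\varphi}=0$. Thus the linear span is cut out by $\ker(M_{\hat\varphi}^t)$, and I must show this kernel is spanned by the forms in (1) and (2).

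Next I pass to the residue matrix. By Theorem~\ref{thm: mobius}, the row of $F_{\hat\varphi}$ indexed by $S$ equals $\sum_{s'\preceq s}\mu(s',s)s'$, where $s=\bigwedge_{i\in S}c_i$, and this is a combination of rows of $M_{\hat\varphi}$. Conversely, each $s$ is a combination of residues, $s=\sum_{s'\preceq s} r(s')$; I would check this by Möbius inversion over the finite poset $P_\varphi$. This is the row equivalence cited from \cite{garrote2026unirational}.

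Now I exhibit kernel elements. A form $z_S-z_{S'}$ with equal meets kills two identical rows of $M_{\hat\varphi}$, so it lies in the kernel. For $r(s)=0$, the invariant $\sum_{s'\preceq s}\mu(s',s)z_{S'}$ applied to $M_{\hat\varphi}$ gives $r(s)=0$. Here one representative $S'$ is chosen for each $s'$; the choice is harmless modulo the forms in (1). So all the listed forms vanish on the span.

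The main obstacle is completeness, and this is where linear independence of the nonzero residues is used. I would do a dimension count. Let $N$ be the number of nonempty $S\subseteq[m]$. Since $M_{\hat\varphi}$ has the same row space as its distinct rows, its rank is the dimension of the span of $P_\varphi$. Because each $s$ is a sum of residues and each residue is a combination of elements of $P_\varphi$, that span equals the span of the nonzero residues. By hypothesis this has dimension $\#\{s: r(s)\neq0\}$. Hence $\dim\ker M_{\hat\varphi}^t = N-\#\{s:r(s)\ne 0\}$.

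On the other side, the forms in (1) span a space of dimension $N-|P_\varphi|$, one relation per extra representative. The invariants for $r(s)=0$ are independent modulo (1) by unitriangularity, since $\mu(s,s)=1$ and the term $z_S$ appears only in the invariant for $s$ relative to the order. They contribute $|P_\varphi|-\#\{s:r(s)\ne0\}$ further dimensions. The totals agree, so the listed forms span the kernel.

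For the final claim, $\mathcal V_\varphi$ is the image of $\mathcal V_{\hat\varphi}$ under the coordinate projection onto the singleton coordinates. Projecting a linear span gives the span of the projection. A linear form in the $x$ variables vanishes on the projected span iff it lies in the ideal of the linear system intersected with $\mathbb R[z_{\{i\}}]$. This intersection is exactly the elimination of the non-singleton variables.
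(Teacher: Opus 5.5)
Your proof is correct and follows essentially the route the paper indicates: identify the linear span with $\ker(M_{\hat\varphi}^t)$, use the Möbius inversion of Theorem~\ref{thm: mobius} to make $F_{\hat\varphi}$ row equivalent to $M_{\hat\varphi}$, and use linear independence of the nonzero residues to read off the kernel. The only difference is in the last step: the paper's argument transports $\ker(F_{\hat\varphi}^t)$ (spanned by the $e_S-e_{S'}$ with equal meets and the $e_S$ with $r(s)=0$) back through the unitriangular change of basis, while you check that the listed forms lie in the kernel and then match dimensions using the same triangularity.
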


\begin{theorem}\cite[Theorem 4.33]{garrote2026unirational}
    \label{thm: poset toric}
    Suppose that the nonzero residue vectors $r(s)$ for $s\in P_\varphi$ are all scalar of standard basis vectors.  Then $\mathcal{V}_{\hat\varphi}$ is a toric variety, with toric reparametrization given by the matrix $F_{\hat\varphi}$.  
    In particular, $\mathcal{V}_\varphi$ is toric. 
\end{theorem}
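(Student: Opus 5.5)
The plan is to turn the Möbius inversion of Theorem~\ref{thm: mobius} into a matrix factorization $M_{\hat\varphi} = Z F_{\hat\varphi}$ with $Z$ invertible, and then read off a monomial parametrization from $F_{\hat\varphi}$.

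\textbf{Step 1: the factorization.} Theorem~\ref{thm: mobius} expresses $r(s)=\sum_{s'\preceq s}\mu(s',s)s'$. Inverting with the zeta function of $P_\varphi$ gives
\[
s=\sum_{s'\preceq s} r(s') \quad \textrm{for all } s\in P_\varphi .
\]
Index rows by the nonempty $S\subseteq[m]$. Rows $S,S'$ with equal meets give equal rows of $M_{\hat\varphi}$, so I first work on the set of distinct elements of $P_\varphi$ and duplicate rows at the end. On distinct elements, the zeta matrix $Z$ (with $Z_{s,s'}=1$ if $s'\preceq s$) is unitriangular with respect to a linear extension, hence invertible over $\mathbb{Z}$, and $M = ZF$. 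Consequently
\[
\hat\varphi(\theta)=M_{\hat\varphi}\phi_{\mathcal T}(\theta)=Z\,F_{\hat\varphi}\phi_{\mathcal T}(\theta).
\]

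\textbf{Step 2: $\mathcal V_{\hat\varphi}$ is toric.} By hypothesis each row of $F_{\hat\varphi}$ is either $0$ or $c_s e_{\tau_s}$ with $c_s\in\mathbb{Z}_{>0}$ and $\tau_s\in\mathcal T$. Then the coordinates of $\theta\mapsto F_{\hat\varphi}\phi_{\mathcal T}(\theta)$ are either $0$ or $c_s\theta^{\alpha_s}$, where $\theta^{\alpha_s}=\tau_s$. This is a monomial map up to the invertible diagonal rescaling by the $c_s$. Its image closure is therefore a toric variety inside the coordinate subspace spanned by the rows with $r(s)\neq 0$. Applying the invertible linear map $Z$, together with the identification of duplicated coordinates $z_S=z_{S'}$, shows that $\mathcal V_{\hat\varphi}$ is isomorphic to this toric variety via an invertible linear change of coordinates. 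This is exactly the reparametrization ``given by $F_{\hat\varphi}$.''

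\textbf{Step 3: $\mathcal V_\varphi$ is toric.} This is the step I expect to be the main obstacle. $\mathcal V_\varphi$ is the image of $\mathcal V_{\hat\varphi}$ under projection onto the singleton coordinates $z_{\{i\}}$, and a linear projection of a toric variety need not be toric in general. I would try to show that this projection is injective, i.e.\ an isomorphism, on the linear span of $\mathcal V_{\hat\varphi}$. The argument would go as follows.
\begin{itemize}
\item Each $s=\bigwedge_{i\in S}c_i$ with $r(s)=c_se_{\tau_s}\neq 0$ carries the monomial $\tau_s$, which lies in the support of every $c_i$ with $i\in S$ but in no $s'\prec s$.
\item By the unitriangular structure, the coordinate $\theta^{\alpha_s}$ can be recovered recursively, from the top of $P_\varphi$ downward, as a linear combination of values of the $f_i$ and of previously recovered monomials. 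Here I would use Theorem~\ref{thm: linear span combinatorially} to express each non-singleton $z_S$ linearly in the $z_{\{i\}}$ on the span.
\item If every non-singleton coordinate is a linear function of the singletons on the span of $\mathcal V_{\hat\varphi}$, then the projection is a linear isomorphism onto its image. Hence $\mathcal V_\varphi$ inherits the toric structure from Step~2.
\end{itemize}
If this injectivity fails in general, the fallback is to interpret ``$\mathcal V_\varphi$ is toric'' through its isomorphism with $\mathcal V_{\hat\varphi}$ on the graph of these linear relations. Verifying that such relations always exist under the standard-basis hypothesis is the delicate point.
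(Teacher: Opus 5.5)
Your Steps~1 and~2 are sound and are the intended argument. The paper only cites this result, but Section~\ref{sec: preliminaries} records exactly your factorization: $F_{\hat\varphi}$ is row equivalent to $M_{\hat\varphi}$, and $F_{\hat\varphi}\phi_{\mathcal T}$ is a monomial map up to positive rescaling.

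\textbf{The gap is Step~3, and it cannot be closed from the stated hypothesis.} The linear span of $\mathcal V_{\hat\varphi}$ is the column space of $M_{\hat\varphi}$, because distinct monomials are linearly independent. So the projection to the singleton coordinates is injective on this span if and only if $\mathrm{rank}\,M_\varphi=\mathrm{rank}\,M_{\hat\varphi}$. Equivalently, every meet $\bigwedge_{i\in S}c_i$ must lie in the span of $c_1,\ldots,c_m$. The residue hypothesis does not force this. Theorem~\ref{thm: linear span combinatorially} cannot supply it either, since its forms come only from coinciding meets or vanishing residues.

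Concretely, take $n=1$, $f_1=\theta^3+\theta^2$ and $f_2=\theta^2+\theta$. In the basis $(\theta^3,\theta^2,\theta)$ we get $c_1=(1,1,0)$, $c_2=(0,1,1)$ and $c_1\wedge c_2=(0,1,0)$. The three residues are the three standard basis vectors, so the hypothesis holds and $\mathcal V_{\hat\varphi}$ is a linear copy of the twisted cubic. However:
\begin{itemize}
\item no linear forms vanish on $\mathcal V_{\hat\varphi}$, and $\mathrm{rank}\,M_\varphi=2<3$;
\item $\mathcal V_\varphi=V(x_2^3-x_1x_2-x_1^2)$ is a nodal cubic: $\theta=0$ and $\theta=-1$ both map to the origin, with distinct tangent lines $x_1=0$ and $x_1+x_2=0$;
\item every monomial plane curve through the origin is $V(x^p-y^q)$ with $\gcd(p,q)=1$, which has a single tangent line there.
\end{itemize}
So no invertible linear change of coordinates makes $\mathcal V_\varphi$ toric. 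Your recursive recovery of the monomials from the $f_i$ fails here, and the fallback isomorphism $\mathcal V_{\hat\varphi}\cong\mathcal V_\varphi$ does not exist, since the projection glues two points.

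The ``in particular'' therefore needs an extra hypothesis, such as $\mathrm{rank}\,M_\varphi=\mathrm{rank}\,M_{\hat\varphi}$. This holds automatically when $P_\varphi$ is a $\pi$-system. For directed trees it is Theorem~\ref{thm: basis general directed tree}(2), which is the input actually needed for the claim about $\mathcal V_{\varphi_{\GG,c^\ast}}$ in Proposition~\ref{prop: constant directed toric}.

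Under this hypothesis your injectivity route is short:
\begin{enumerate}
\item Each $c_i=\sum_{s\preceq c_i}r(s)$ with nonnegative residues, and the supports of the $c_i$ cover $\mathcal T$. So the nonzero residues include a multiple of $e_\tau$ for every $\tau\in\mathcal T$, giving $\mathrm{rank}\,M_{\hat\varphi}=|\mathcal T|$.
\item Hence $M_\varphi$ has full column rank. Some invertible $A$ on $\mathbb R^m$ row-reduces it to the rows $e_\tau$, $\tau\in\mathcal T$, plus zero rows.
\item Then $A\varphi(\theta)=(\phi_{\mathcal T}(\theta),0)$ is a monomial map.
\end{enumerate}
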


When the conditions of both Theorems~\ref{thm: linear span combinatorially} and~\ref{thm: poset toric} are fulfilled, \cite[Section 4.8]{garrote2026unirational} demonstrates how a basis for the toric ideal defined by $\phi_{\mathcal{T}}$ can be used to recover a basis for the vanishing ideal $I_\varphi$. 
These results collectively reduce Problems~\ref{prob: package I}\eqref{prob: linear span},~\eqref{prob: toric} and~\eqref{prob: implicitization} to the combinatorial problem of describing $P_\varphi$, its residue vectors $r(s)$, and its Möbius function. 
Notably, these three tasks are particularly achievable when the coordinate functions $f_1,\ldots, f_m$ of $\varphi$ are combinatorial generating functions, since the poset $P_\varphi$ and its residue vectors can be studied purely in terms of the objects enumerated by $f_1,\ldots, f_m$.  
As we will see in this paper, completing these tasks for the varieties $\mathcal{V}_\varphi$ in a given family $\mathcal{F}$ often suffices to provide enough information to also solve the fourth problem (Problem~\ref{prob: package I}\eqref{prob: distinguishability}), yielding the complete Package~\ref{prob: package I} for $\mathcal{F}$.

Finally, since $\hat\varphi:\mathbb{R}^n\to \mathbb{R}^{P_\varphi}$ is a (possibly) higher-dimensional coordinate-wise extension of $\varphi: \mathbb{R}^n\to \mathbb{R}^m$, it follows that their vanishing ideals satisfy $I_{\varphi} = I_{\hat\varphi}\cap \mathbb{R}[x_1,\ldots, x_n]$, where $x_1,\ldots, x_n$ denote the coordinates of $\varphi(\theta)$ in~\eqref{eqn: polynomial map} \cite[Lemma 4.26]{garrote2026unirational}. 
This relationship explains the linear elimination step in Theorem~\ref{thm: linear span combinatorially}.  
However, when the ground set of $P_\varphi$ is a subset of $\{0,c_1,\ldots, c_m\}$, it follows that $\hat\varphi = \varphi$, and hence $I_{\hat\varphi} = I_\varphi$, meaning that the elimination step is not required.  
In this case, the poset is a perfect representation of the combinatorial relations on the coordinate functions of $\varphi$.  
Hence, \cite{garrote2026unirational} refers to this desirable property by calling $P_\varphi$ a \emph{$\pi$-system} if its ground set is contained in $\{0,c_1,\ldots, c_m\}$.   
Given a family of varieties $\mathcal{F}$ as in~\eqref{eqn: family}, \cite[Problem 4.23]{garrote2026unirational} asks for a characterization of $\mathcal{V}_\varphi\in\mathcal{F}$ for which $P_{\varphi}$ is a $\pi$-system.  
In Section~\ref{subsec: pi-graphs and P-partitions}, we answer this question for the family $\mathcal{F}$ studied in this paper using Stanley's theory of $P$-partitions \cite[Chapter 3.15]{stanley2011enumerative}.

\section{The variety of a colored directed acyclic graph}
\label{sec: varieties of colored DAGs}
This section introduces the family of varieties to be studied in the subsequent sections. 
Let $\GG = ([m], E)$ be a simple directed acyclic graph (DAG) with node set $[m] := \{1,\ldots, m\}$ and edge set $E$.  
A \emph{coloring} of $\GG$ is a pair of surjective maps $c_{n,e} = (c_n: [m] \to [n], c_e: E\to [e])$ for a pair of positive integers $n, e$. 
When the choices of $n, e$ are clear, or lacking need for specification, we suppress the subscripts and simply write $c$ for $c_{n,e}, c_n$ or $c_e$. 
The pair $(\GG,c)$ is called a \emph{colored DAG}. 

A \emph{directed path } in a DAG $\GG = ([m], E)$ is a sequence of edges $P = (e_1,\ldots, e_s)$ such that $e_i = v_{i-1} \to v_{i} \in E$ for all $i\in[s]$ and some subset $\{v_0,\ldots, v_s\}\subseteq[m]$. 
A \emph{trek} between nodes $v_{s}$ and $v_{t}'$ in $\GG$ is a pair of directed paths $T = \{P,Q\}$, where $P = (e_1,\ldots, e_s)$ and $Q = (e_1',\ldots, e_t')$ such that $v_0 = v_0'$. 
The common starting node of $P$ and $Q$ is called the \emph{top} of $T$, and it is denoted $\textrm{top}(T) = v_0 = v_0'$.  
When the only node common to both $P$ and $Q$ is $\textrm{top}(T)$, the trek $T$ is called \emph{simple}.
For $i,j\in[m]$, we let
\[
\begin{split}
    \mathcal{T}(i,j) &= \{T = \{P, Q\} : T\textrm{ a trek between $i$ and $j$ in $\GG$}\}, \, \textrm{and}\\
    \mathcal{S}(i,j) &= \{T = \{P, Q\} : T\textrm{ a simple trek between $i$ and $j$ in $\GG$}\}. 
\end{split}
\]
Note that $\mathcal{T}(i,j) = \mathcal{T}(j,i)$ and $\mathcal{S}(i,j) = \mathcal{S}(j,i)$ for all $i,j\in[m]$. 

For a colored DAG $(\GG,c)$, we may interpret the colors $c(i)$ and $c(i\to j)$ assigned to the vertices and edges of $\GG$ as identifying vertices (respectively, edges). 
In this way, the coloring $c$ enriches the combinatorics of treks in $\GG$, since the same trek (up to the colors of its vertices and edges) may appear several times in the colored DAG $(\GG,c)$.  

To enumerate treks in colored DAGs, we use the multivariate generating polynomial defined as follows: 
For a colored DAG $(\GG,c)$ with $c = c_{n,e}$, assign a parameter $\omega_i$ to each $i\in[n]$ and a parameter $\lambda_j$ to each $j\in [e]$. 
For a trek $T =\{P,Q\}$ in $\GG$, define the trek monomial 
\[
m_T^{(\GG,c)} = \omega_{c(\textrm{top}(T))}\prod_{e\in P}\lambda_{c(e)}\prod_{e\in Q}\lambda_{c(e)}. 
\]
The \emph{trek polynomial} (for nodes $i$ and $j$ in $(\GG,c)$) is
\[
p_{i,j}^{(\GG,c)} = \sum_{T\in\mathcal{T}(i,j)}m_T^{(\GG,c)} \in \mathbb{Z}_{\geq 0}[\omega_1,\ldots, \omega_n,\lambda_1,\ldots, \lambda_e]. 
\]
% When the colored DAG $(\GG,c)$ is clear from context, we may simply write $p_{i,j}$ instead of $p_{i,j}^{(\GG,c)}$. 
Note that every DAG $\GG$ admits a pair of canonical colorings: 
\begin{itemize}
    \item the \emph{uncoloring} of $\GG = ([m],E)$ is $c^\circ = c_{m, |E|}$, and
    \item the \emph{constant coloring} of $\GG = ([m],E)$ is $c^\ast = c_{1,1}$. 
\end{itemize}

Every coloring $c$ of $\GG$ refines the uncoloring and coarsens the constant coloring. 
In particular, the trek polynomials $p_{i,j}^{(\GG,c^\circ)}$ have coefficient vectors that are $0/1$-vectors in $\mathbb{Z}_{\geq0}^{\mathcal{T}}$, and they individually enumerate every trek between $i$ and $j$ in $\GG$. 
At the other end of the spectrum, $p_{i,j}^{(\GG,c^\ast)}/\omega$ is a univariate polynomial living in $\mathbb{Z}_{\geq 0}[\lambda]$ where the coefficient of $\lambda^k$ is the number of treks between $i$ and $j$ in $\GG$ of \emph{length} $k = |P| + |Q|$. 

For the constant coloring, we have the following useful observation from \cite[Lemma 2.5]{garrote2026unirational}. 
\begin{lemma}\cite[Lemma 2.5]{garrote2026unirational}
    \label{lem: degree closure}
    Let $\GG = ([m],E)$ be a DAG and let $N_{\GG}$ denote the length of the longest trek in $\GG$.  
    The polynomial $\sum_{1\leq i\leq j\leq m}p_{i,j}^{(\GG,c^\ast)}$ has support $\mathcal{T} = \{\omega, \omega\lambda, \ldots, \omega\lambda^{N_{\GG}}\}$. 
\end{lemma}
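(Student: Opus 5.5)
Under the constant coloring $c^\ast$, every trek $T=\{P,Q\}$ contributes the monomial $m_T^{(\GG,c^\ast)} = \omega\lambda^{|P|+|Q|}$. The support of $\sum_{1\leq i\leq j\leq m}p_{i,j}^{(\GG,c^\ast)}$ is therefore exactly the set $\{\omega\lambda^k : k \text{ is the length of some trek in } \GG\}$. No cancellation can occur, because all coefficients are nonnegative integers. The statement thus reduces to a purely combinatorial claim: the set $L$ of trek lengths in $\GG$ is precisely $\{0,1,\ldots,N_{\GG}\}$. One inclusion is immediate. Every length is a nonnegative integer at most $N_{\GG}$ by definition of $N_{\GG}$, and $N_{\GG}$ is finite because $\GG$ is acyclic, so directed paths have at most $m-1$ edges.

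For the reverse inclusion I will show that $L$ is downward closed. Fix a trek $T=\{P,Q\}$ of length $k\geq 1$; I will produce a trek of length $k-1$. Since $k = |P|+|Q|\geq 1$, at least one path is nonempty, say $P=(e_1,\ldots,e_s)$ with $s\geq 1$. Deleting its last edge $e_s$ gives $P' = (e_1,\ldots,e_{s-1})$, which is still a directed path starting at $\mathrm{top}(T)$; when $s=1$ it is the empty path at the top node. Hence $T'=\{P',Q\}$ is a trek with the same top, between $v_{s-1}$ and the endpoint of $Q$, of length $k-1$. Starting from a trek of length $N_{\GG}$ and iterating this step gives treks of every length $N_{\GG}, N_{\GG}-1, \ldots, 0$. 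The length-$0$ trek is the trivial trek $\{(),()\}$ at a single node, which lies in $\mathcal{T}(i,i)$.

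The only point needing care is this convention issue. I must confirm that the definition of a trek allows empty paths, so that $\mathcal{T}(i,i)$ contains the trivial trek and, for example, $p_{i,i}$ has constant term $\omega$ in $\lambda$. I also need treks to be counted in $\mathcal{T}(i,j)$ regardless of whether they are simple, since truncating a path can produce a non-simple trek or change its endpoints. Both conventions are consistent with the definition given above: paths are arbitrary edge sequences, possibly empty, and simplicity is not required. So the argument goes through, and the support is exactly $\{\omega,\omega\lambda,\ldots,\omega\lambda^{N_{\GG}}\}$.
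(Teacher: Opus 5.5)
Your proof is correct. With $c^\ast$ each trek contributes $\omega\lambda^{|P|+|Q|}$, and nonnegative coefficients rule out cancellation. Deleting the terminal edge of a nonempty leg gives a trek of length one less, so the set of trek lengths is exactly $\{0,1,\ldots,N_{\GG}\}$. This paper does not prove the lemma itself; it imports it from \cite{garrote2026unirational}, so there is no in-paper argument to compare against. Your truncation argument is the natural one. You were right to flag the conventions it relies on: empty legs are allowed, as in the trivial trek giving the constant term $\omega$ of $p_{i,i}$, and treks need not be simple.
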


\begin{example}
    Let $\GG$ be the DAG on 5 nodes from the Figure~\ref{fig:: definition of treks}.
    \begin{figure}[t]
        \centering
        \begin{tikzpicture}
            \node[circle, draw, minimum size=1pt, inner sep=2pt] (1) at (0,0) {1};
            \node[circle, draw, minimum size=2pt, inner sep=2pt] (2) at (1.5, 0) {2};
            \node[circle, draw, minimum size=2pt, inner sep=2pt] (3) at (3, 0) {3};
            \node[circle, draw, minimum size=2pt, inner sep=2pt] (4) at (0, -1) {4};
            \node[circle, draw, minimum size=2pt, inner sep=2pt] (5) at (1.5, -1) {5};

            \draw[->,  >=stealth] (2) edge (1);
            \draw[->,  >=stealth] (3) edge (2);
            \draw[->,  >=stealth] (4) edge (1);
            \draw[->,  >=stealth] (4) edge (5);
            \draw[->,  >=stealth] (3) edge (5);
        \end{tikzpicture}
        \caption{A directed graph on 5 nodes}
        \label{fig:: definition of treks}
    \end{figure}
    Note that there are two treks between vertices 1 and 5. Denote them by $T_1$ and $T_2$, where $\textrm{top}(T_1)=3$ and $\textrm{top}(T_2)=4$. Both $T_1$ and $T_2$ are simple treks, as the paths only share the top of the trek.
    For the uncoloring $c^\circ$, we naturally index the edge parameters by $\lambda_{ij}$ for the edge $i \to j$ rather than using an arbitrary integer index from $[\vert{}E\vert{}]$.
    Thus, 
    \[
    p_{1, 5}^{(\GG,c^\circ)} = w_3\lambda_{32}\lambda_{21}\lambda_{35} + w_4\lambda_{41}\lambda_{45}.
    \]
    We can also consider treks between a node $i$ and itself; i.e., the treks in $\mathcal{T}(i,i)$. For instance, when $i=1$ we obtain
    \[
    p_{1, 1}^{(\GG,c^\circ)} = w_1 + w_2\lambda_{21}^2 + w_{3}\lambda_{32}^2\lambda_{21}^2 + w_{4}\lambda_{41}^2.
    \]
    For all treks $T =\{P,Q\}$ corresponding to a monomial in $p_{1,1}^{(\GG,c^\circ)}$, we have that $P = Q$, which results in each $\lambda_{ij}$ appearing with an exponent of $2$ in the trek monomial. 
    If we instead take $\GG$ with the constant coloring $c^\ast$ we obtain 
    % The coefficient of $\lambda^k$ in the following specialization of $p_{1,1}(\omega_1,\omega_2,\omega_3,\omega_4,\omega_5,\lambda_{21},\lambda_{41},\lambda_{32},\lambda_{45},\lambda_{35})$ is the number of treks of length $k$ between $1$ and itself in $\GG$:
    \begin{equation}
    \label{eqn: specialization ex}
    p_{1,1}^{(\GG,c^\ast)} = \omega + 2\omega\lambda^2 + \omega\lambda^4,
    \end{equation}
    which shows that there is one trek of length $0$, two treks of length $2$ and one trek of length $4$ between 1 and itself in $\GG$. 
    The polynomial $p_{1,1}^{(\GG,c^\ast)}/\omega$ has internal zeros (e.g.~zero coefficients) at degrees $1$ and $3$. 
    However, the following polynomial has no internal zeros, due to the observation in Lemma~\ref{lem: degree closure}:
    \[
    \frac{1}{\omega}\sum_{1\leq i\leq j\leq m}p_{i,j}^{(\GG,c^\ast)} =  5+ 5\lambda + 8\lambda^2 + 2\lambda^3 + \lambda^4%\textrm{\textcolor{red}{L: Add this polynomial.}}
    \]
\end{example}

For a colored DAG $(\GG,c)$, a unirational variety satisfying~\eqref{eqn: polynomial map} is obtained by taking the coordinate functions to be trek polynomials $p_{i,j}^{(\GG,c)}$.
\begin{definition}
    \label{def: colored DAG model}
    Let $(\GG,c)$ be a colored DAG with $\GG = ([m],E)$ and $c = c_{n,e}$, and let $\Theta = \mathbb{R}^n_{\geq 0}\times \mathbb{R}^{e}\subseteq \mathbb{R}^{n+e}$.  
    The \emph{colored (Gaussian) DAG model} for $(\GG,c)$ is $\mathcal{M}(\GG,c) = \varphi_{\GG,c}(\Theta)$ where 
    \[
    \varphi_{\GG,c}: \mathbb{R}^{n+e}\to \mathbb{R}^{\binom{m+1}{2}} \quad \theta \mapsto \left(p_{i,j}^{(\GG,c)}(\theta)\right)_{1\leq i\leq j\leq m} = (\sigma_{i,j})_{1\leq i\leq j\leq m} = \Sigma. 
    \]
    The variety $\mathcal{V}_{\varphi_{\GG,c}} = \overline{\varphi_{\GG,c}(\Theta)} = \varphi_{\GG,c}(\mathbb{R}^{n + e})$ is the \emph{colored DAG variety} for $(\GG,c)$. 
\end{definition}

Statistically, $\mathcal{M}(\GG,c)$ represents the space of covariance matrices for a linear Gaussian causal model over random variables $X_1, \ldots, X_m$, whose causal interactions are governed by the DAG $\GG$ \cite{pearl2009causality}. The parameter $\lambda_{ij}$ measures the direct effect of $X_i$ on $X_j$, and $\omega_i$ captures the error variance of $X_i$ \cite{drton2018algebraic}, and the coloring $c$ acts as an equality constraint on this system, tying parameters together whenever their corresponding edges or vertices share a color \cite{boege2024colored}.

The family of colored DAG varieties
    \[
    \mathcal{F}_{\textrm{c-DAG}} = \{\mathcal{V}_{\varphi_{\GG,c}} : (\GG,c) \textrm{ is a colored DAG}\}
    \]
satisfies~\eqref{eqn: family}. Hence, the theory of \cite{garrote2026unirational} described in Section~\ref{sec: preliminaries} can be applied to assess the problems in Package~\ref{prob: package I}.  
In particular, we are generally interested in describing the combinatorial properties of the collection of posets
\[
\mathcal{P}_{\textrm{c-DAG}} = \{ P_{\varphi_{\GG,c}} : (\GG,c) \textrm{ is a colored DAG}\}. 
\]
An arbitrary poset $P_{\varphi_{\GG,c}}\in\mathcal{P}_{\textrm{c-DAG}}$ can be complex, so it is reasonable to begin investigations in the extremal settings, where $c$ is either the uncoloring or the constant coloring. 

\begin{example}[Posets for uncolored DAGs]
    \label{ex: uncoloring poset}
    For the uncoloring, the poset $P_{\varphi_{\GG,c^\circ}}$ is easy.  
    Letting $c_{i,j}^{(\GG,c^\circ)}$ denote the coefficient vector of $p_{i,j}^{(\GG,c^\circ)}$, we have that $c_{i,j}^{(\GG,c^\circ)}\wedge c_{k,\ell}^{(\GG,c^\circ)} = 0$ for all $i,j,k,\ell\in[m]$.  
    In particular, $P_{\varphi_{\GG,c^\circ}}$ is a $\pi$-system with ground set $\{0\}\cup \{c_{i,j}^{(\GG,c^\circ)} : i,j\in[m]\}$ where the only relations are $0\preceq c_{i,j}^{(\GG,c^\circ)}$ for all $i,j\in[m]$.
    Hence, the Hasse diagram for $P_{\varphi_{\GG,c^\circ}}$ is 
    \begin{center}
     \begin{tikzpicture}[xscale=1.2, yscale=1]
                % Nodes even
                \node (0-0) at (0.5, 0) {$c_{1,1}^{(\GG,c^\circ)}$};
                \node (1-0) at (2, 0) {$c_{1,2}^{(\GG,c^\circ)}$};
                \node (2-0) at (3, 0) {$\cdots$};
                \node (3-0) at (4, 0) {$c_{m-1,m}^{(\GG,c^\circ)}$};
                \node (4-0) at (5.5, 0) {$c_{m,m}^{(\GG,c^\circ)}$};

                % Nodes empty set
                \node (empty) at (3, -1.5) {$0$};
            
                % Edges (Cover Relations)                
                \draw (0-0) edge (empty);
                \draw (1-0) edge (empty);
                % \draw (2-0) edge (empty);
                \draw (3-0) edge (empty);
                \draw (4-0) edge (empty);
            \end{tikzpicture}   .
    \end{center}
    Note that $c_{i,j}^{(\GG,c^\circ)} = 0$ if and only if $\mathcal{T}(i,j) = \emptyset$. 
    So the only linear relations satisfied by $\mathcal{V}_{\varphi_{\GG,c^\circ}}$ are $\sigma_{i,j} = 0$ when $\mathcal{T}(i,j) = \emptyset$, which are the linear $P_{\varphi_{\GG,c^\circ}}$-invariants in Theorem~\ref{thm: linear span combinatorially} (see \cite[Example 4.17]{garrote2026unirational}). 
\end{example}

When fewer colors are used, the poset $P_{\varphi_{\GG,c}}$ becomes more interesting, as well as more difficult to characterize.  
The focus of this paper are posets at the other extreme of the coloring spectrum; that is, $P_{\varphi_{\GG,c^\ast}}$ for the constant coloring $c^\ast$.  
In particular, Section~\ref{subsec: poset characterization} will characterize these posets in the following subfamily of $\mathcal{P}_{\textrm{c-DAG}}$:
\[
\mathcal{P}_{\textrm{$c^\ast$-TREE}} = \{P_{\varphi_{\GG,c^\ast}} : \GG \textrm{ is a directed tree}\}. 
\]

\subsection{Notes on the linear span of \texorpdfstring{$\mathcal{V}_{\varphi_{\GG,c}}$}{a colored DAG variety}}
Before continuing, we make some general observations regarding the linear span of a colored DAG variety $\mathcal{V}_{\varphi_{\GG,c}}$.
In general, $\ker(M_{\varphi_{\GG,c}}^t)$ is nontrivial.
In the remainder of this section we provide a sufficient condition for $\ker(M_{\varphi_{\GG,c}}^t)$ to be nontrivial, which includes all colored DAGs with the constant coloring. 

\begin{lemma}
    \label{lem: dimension}
    Let $(\GG,c)$ be a colored DAG with $\GG = ([m],E)$ and $c = c_{n,e}$. 
    Let $\ell_{\GG}$ 
    % \marina{(Changed $\ell_{\GG,\cast}$ to $\ell_{\GG}$ since it does not depend on the color -- also that was used below)} 
    denote the length of the longest directed path in $\GG$. 
    Then 
    \begin{equation}
    \label{eqn: monomial bound}
    |\mathcal{T}| \leq n \binom{e + 2\ell_{\GG}}{2\ell_{\GG}}.
    \end{equation}
    Moreover, if $|\mathcal{T}|< \binom{m + 1}{2}$ then
    \begin{equation}
        \label{eqn: dimension lower bound}
        \dim\left(\ker(M_{\varphi_{\GG,c}}^t)\right) \geq \binom{m + 1}{2} - |\mathcal{T}| > 0.
    \end{equation}
\end{lemma}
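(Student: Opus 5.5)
The plan is to bound the number of possible trek monomials directly, and then to get the kernel bound from rank--nullity applied to $M_{\varphi_{\GG,c}}^t$.

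\textbf{Shape of a trek monomial.} For a colored DAG $(\GG,c)$ with $c=c_{n,e}$, every element of $\mathcal{T}$ is a trek monomial
\[
m_T^{(\GG,c)} = \omega_{c(\textrm{top}(T))}\prod_{e\in P}\lambda_{c(e)}\prod_{e\in Q}\lambda_{c(e)}
\]
for some trek $T=\{P,Q\}$. The factor $\omega_{c(\textrm{top}(T))}$ is one of $n$ variables $\omega_1,\ldots,\omega_n$. The remaining factor is a monomial in $\lambda_1,\ldots,\lambda_e$ of total degree $|P|+|Q|$. Each of $P$ and $Q$ is a directed path in $\GG$, so each has at most $\ell_{\GG}$ edges. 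Hence the $\lambda$-part has degree at most $2\ell_{\GG}$.

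\textbf{Counting.} The number of monomials in $e$ variables of degree at most $d$ is $\binom{e+d}{d}$, by the standard stars-and-bars count (add a slack variable). Taking $d=2\ell_{\GG}$, the map $m_T\mapsto(\omega\text{-index},\ \lambda\text{-monomial})$ sends $\mathcal{T}$ injectively into a set of size $n\binom{e+2\ell_{\GG}}{2\ell_{\GG}}$. This gives~\eqref{eqn: monomial bound}. The bound is crude and needs no finer trek combinatorics: many of these monomials are not realized, and $\omega$ and $\lambda$ choices are correlated, but that only helps the inequality.

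\textbf{Kernel bound.} The matrix $M_{\varphi_{\GG,c}}$ is $\binom{m+1}{2}\times|\mathcal{T}|$, with rows the coefficient vectors of $p_{i,j}^{(\GG,c)}$ for $1\le i\le j\le m$. So $M_{\varphi_{\GG,c}}^t$ is a $|\mathcal{T}|\times\binom{m+1}{2}$ matrix acting on $\mathbb{R}^{\binom{m+1}{2}}$. Its rank is at most $|\mathcal{T}|$. Rank--nullity then gives
\[
\dim\ker(M_{\varphi_{\GG,c}}^t)=\binom{m+1}{2}-\operatorname{rank}\ge\binom{m+1}{2}-|\mathcal{T}|,
\]
and this is positive by the hypothesis $|\mathcal{T}|<\binom{m+1}{2}$. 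That is~\eqref{eqn: dimension lower bound}.

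\textbf{Expected obstacle.} I do not expect a real obstacle. The only points needing care are that paths in a DAG have length at most $\ell_{\GG}$ by definition, and that the stars-and-bars count includes all degrees up to $2\ell_{\GG}$. For the constant coloring ($n=e=1$) the first bound specializes to $|\mathcal{T}|\le 2\ell_{\GG}+1$, consistent with Lemma~\ref{lem: degree closure} since $N_{\GG}\le 2\ell_{\GG}$. This explains why such colored DAGs generally have a nontrivial kernel once $\binom{m+1}{2}>2\ell_{\GG}+1$.
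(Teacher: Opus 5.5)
Your proposal is correct and follows the paper's proof essentially step for step. You bound the $\lambda$-degree of each trek monomial by $2\ell_{\GG}$, count monomials of bounded degree times the $n$ choices of $\omega$, and then apply rank--nullity to $M_{\varphi_{\GG,c}}^t$ using $\mathrm{rank}\leq|\mathcal{T}|$.
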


\begin{proof}
    The monomials in $\mathcal{T}$ are of the form
$\omega_v \lambda_1^{i_1}\cdots\lambda_{e}^{i_{e}},$
where 
 $i_1+\cdots +i_{e} \leq 2\ell_{\GG},$ 
that is, the total degree in the $\lambda$ parameters is at most $2\ell_{\GG}$. The number of monomials in $e$ variables of degree at most $2\ell_{\GG}$ is
$\binom{e + 2\ell_{\GG}}{2\ell_{\GG}} = \binom{e + 2\ell_{\GG}}{e}$,
and therefore the total number of monomials in $\mathcal{T}$ is at most $n\binom{e + 2\ell_{\GG}}{2\ell_{\GG}}$, proving~\eqref{eqn: monomial bound}.
% $$M \leq k_V \binom{k_E + 2\ell_G}{2\ell_G}.$$
Finally, if $|\mathcal{T}|< \binom{m + 1}{2}$ we have that $\mathrm{rank}(M_{\varphi_{\GG,c}}^t) \leq |\mathcal{T}|,$
and it follows that
\[
\dim\left(\ker(M_{\varphi_{\GG,c}}^t)\right) = \binom{m + 1}{2} - \mathrm{rank}(M_{\varphi_{\GG,c}}^t) \geq \binom{m + 1}{2} - |\mathcal{T}| > 0.
\]
\end{proof}

The following proposition implies that the linear span of the colored DAG variety $\mathcal{V}_{\GG,c^\ast}$ for any DAG $\GG$ with the constant coloring $c^\ast$ is a proper linear subspace of $\mathbb{R}^{\binom{m+1}{2}}$. 

\begin{proposition}\label{prop: ineq existance linear constraint}
    Let $(\GG,c)$ be a colored DAG with $\GG = ([m],E)$ and $c = c_{n,1}$. 
    Then there exists at least one linear constraint satisfied by all $\Sigma \in \mathcal{V}_{\varphi_{\GG,c}}$ whenever
    \begin{equation}
        n < \frac{1}{2m-1}\binom{m+1}{2}.
        \label{eq: exists lin constraint}
    \end{equation}
\end{proposition}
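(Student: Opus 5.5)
The plan is to reduce to Lemma~\ref{lem: dimension} by bounding $|\mathcal{T}|$ when there is a single edge color ($e=1$). With $e = 1$, every trek monomial has the form $\omega_v\lambda^k$, where $v\in[n]$ is the color of the top and $k = |P|+|Q|$ is the length of the trek. Each of $P$ and $Q$ is a directed path in $\GG$, so it has length at most $\ell_{\GG}\leq m-1$, since a directed path in a DAG on $m$ nodes visits distinct nodes. Hence $0\leq k\leq 2\ell_{\GG}\leq 2m-2$. This is exactly the bound~\eqref{eqn: monomial bound} specialized to $e=1$, giving
\[
|\mathcal{T}| \leq n\binom{1+2\ell_{\GG}}{2\ell_{\GG}} = n(2\ell_{\GG}+1)\leq n(2m-1).
\]

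Next I would combine this with the hypothesis~\eqref{eq: exists lin constraint}. Multiplying through by $2m-1>0$, the hypothesis gives $n(2m-1)<\binom{m+1}{2}$, and therefore $|\mathcal{T}|<\binom{m+1}{2}$. The second part of Lemma~\ref{lem: dimension} then shows that $\ker(M^t_{\varphi_{\GG,c}})$ is nontrivial.

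Finally, I would translate a nonzero kernel vector into a linear constraint. Take a nonzero $a\in\ker(M^t_{\varphi_{\GG,c}})$, that is, $a^tM_{\varphi_{\GG,c}} = 0$. For every parameter $\theta$ we have $a\cdot\varphi_{\GG,c}(\theta) = a^tM_{\varphi_{\GG,c}}\phi_{\mathcal{T}}(\theta) = 0$. So the nonzero linear form $\sum_{i\le j}a_{ij}\sigma_{i,j}$ vanishes on the image of $\varphi_{\GG,c}$, and because it is a polynomial it also vanishes on the Zariski closure $\mathcal{V}_{\varphi_{\GG,c}}$.

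There is no serious obstacle here; the only point needing care is confirming that $\ell_{\GG}\le m-1$, so that the monomial count depends only on $m$ and $n$.
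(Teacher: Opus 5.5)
Your proposal is correct and follows the paper's proof: you specialize the bound \eqref{eqn: monomial bound} to $e=1$, use $\ell_{\GG}\le m-1$ to get $|\mathcal{T}|\le n(2m-1)<\binom{m+1}{2}$, and then invoke Lemma~\ref{lem: dimension}. Your last step, turning a nonzero kernel vector into a linear form that vanishes on $\mathcal{V}_{\varphi_{\GG,c}}$, spells out what the paper leaves implicit.
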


\begin{proof}
    By~\eqref{eqn: monomial bound},
    $$|\mathcal{T}| \leq n \binom{e + 2\ell_{\GG}}{2\ell_{\GG}} = n \binom{1 + 2\ell_{\GG}}{2\ell_{\GG}} = n(2\ell_{\GG}+1).$$
    Since $\GG$ is a DAG, the longest path has at most $m-1$ edges, so $\ell_{\GG} \leq m-1$. Therefore
    $|\mathcal{T}| \leq n(2m-1).$
    By Lemma~\ref{lem: dimension}, the kernel is nonempty whenever $|\mathcal{T}|< \binom{m + 1}{2}$, that is,
    $$n(2m-1) < \binom{m+1}{2},$$
    which proves the result.
\end{proof}

\begin{remark}
    Note that the right-hand side of \ref{eq: exists lin constraint} is strictly smaller than $m$ whenever $m>1$. That implies that the uncoloring of the vertices does not satisfy this sufficient condition.
\end{remark}

\section{The poset \texorpdfstring{$P_{\varphi_{\GG,c^\ast}}$}{} for constantly colored directed trees}
\label{subsec: decompositions of trek polynomials}
% \textcolor{red}{
% The content of this section provides a complete example of the techniques developed above. 
% First we give a description of the poset $P_{\varphi_{\GG,c^\ast}}$ whenever $\GG$ is a directed tree. 
% % In the case that $\GG$ is a directed tree, this 
% This poset description is utilized to give a closed-form expression for the linear span of the model $\mathcal{M}(\GG,c^\ast)$, a complete solution to the model equivalence problem, the linear model equivalence problem, a simple toric reparameterization, and a closed-form solution to the implicitization problem. 
% This shows how a description of the poset $P_{\GG,c}$ for a family of colored graphs $(\GG,c)$ provides a valuable combinatorial tool for addressing both algebraic and statistical problems for colored DAG models. 
% Finally, we characterize the constantly colored directed trees that are $\pi$-graphs, revealing connections to Richard Stanley's theory of $P$-partitions. 
% }
%
%In the remainder of this paper, a \emph{polytree} is a directed acyclic graph whose underlying undirected graph is a tree. 
For the remainder of this paper, we restrict our attention to \emph{polytrees}, i.e.~directed acyclic graphs whose underlying undirected graph is a tree.
A \emph{directed tree} is a polytree with a unique source node. 
This section describes the poset $P_{\varphi_{\GG,c^\ast}}$ when $\GG$ is a directed tree. 
Theorem~\ref{thm: poset characterization} and Corollary~\ref{cor: simplified poset characterization} give structural characterizations of $P_{\varphi_{\GG,c^\ast}}$, Lemma~\ref{lem: complete möbius function} gives a closed-form formula for its Möbius function and Theorem~\ref{thm: pi-graphs} characterizes when $P_{\varphi_{\GG,c^\ast}}$ is a $\pi$-system. 

\subsection{A structural characterization of \texorpdfstring{$P_{\varphi_{\GG,c^\ast}}$}{the poset}}
\label{subsec: poset characterization}
Consider the polynomial map $\varphi_{\GG,c^\ast}$ associated to a colored DAG $(\GG,c^\ast)$ in Definition~\ref{def: colored DAG model}. 
Following the notation defined in Section~\ref{sec: preliminaries}, we let $\mathcal{T}$ denote the set of all trek monomials supporting the trek polynomials $p_{i,j}^{(\GG,c^\ast)}$.
In the remainder of this paper, we let $t(i,j)\in \mathbb{Z}_{\geq 0}^{\mathcal{T}}$ denote the coefficient vector of $p_{i,j}^{(\GG,c^\ast)}$. 

The elements of $P_{\varphi_{\GG,c^\ast}}$ are the coordinate-wise minimums of sets of the coefficient vectors $t(i,j)$ of trek polynomials $p_{i,j}^{(\GG,c^\ast)}$. 
Hence, to fully describe the poset $P_{\varphi_{\GG,c^\ast}}$ we need to describe the vectors $a\wedge b$ where $a = t(i,j)$, $b = t(k,\ell)$ for vertices  $i,j,k,\ell$ in $\GG$. 

\begin{notation}
    \label{notation: trek polynomials}
    Given a polynomial $p$ with coefficient vector $a$, we emphasize the coefficients of $p$ by denoting $p$ as $p_a$.
    Describing the elements in $P_{\varphi_{\GG,c^\ast}}$ amounts to characterizing the polynomials $p_a,p_b$ and $p_{a\wedge b}$. 
\end{notation}

% 
% For any node $i$ in a polytree $\GG$, we denote the set of its ancestors by $\mathrm{an}_\GG(i)$, defined as the set of all nodes $u$ for which there exists a directed path from $u$ to $i$. 
% \textcolor{red}{We further define the set of maximal ancestors of a node $i$ as $\mathrm{ma}_\GG(i) = \{u_1, \ldots, u_k\}$, which consists of the source nodes of $\GG$ contained within $\mathrm{an}_\GG(i)$.}
% In the special case where a polytree possesses a unique source node, we refer to it as a \textit{directed tree}.
% 
% Since $c^\ast$ is the constant coloring of $\GG$, the set $\mathcal{T}$ of all trek monomials for $(\GG,c^*)$ lives in the polynomial ring in two variables $\mathbb Q[\omega, \lambda]$. 
By Lemma~\ref{lem: degree closure}, the set $\mathcal{T}$ of all trek monomials for $(\GG,c^*)$ is $\mathcal T = \{\omega, \omega\lambda, \ldots, \omega\lambda^{N_{\GG}}\}$, where $N_{\GG}$ is the length of the longest trek in ${\GG}$. 
Since every monomial in $\mathcal{T}$ is divisible by exactly one $\omega$, we can treat $\mathcal{T}$ as a subset of monomials in the univariate ring $\Q[\lambda]$.

To understand the combinatorial structure of the univariate polynomials parameterizing $\mathcal{V}_{\varphi_{\GG,c^\ast}}$, we analyze the topology of the graph $\GG$.
For any node $i$ in a DAG $\GG$, we denote the set of its \emph{ancestors} by $\mathrm{an}_{\GG}(i)$, which is the set of nodes $u$ for which there exists a directed path from $u$ to $i$. Note that every node is considered an ancestor of itself via a path of length zero, so $i \in \mathrm{an}_{\GG}(i)$. 
In a polytree $\GG = ([m],E)$, any trek $T\in \mathcal T(i,j)$ for $i,j\in [m]$ can be uniquely decomposed into a pair $(A, S)$, where 
$S \in \mathcal{S}(i,j) = \{S\}$ is the unique simple trek between $i$ and $j$ and
$A\in T(\textrm{top}(S), \textrm{top}(S))$. % is a \textcolor{red}{self-trek} between the top of $S$ and itself. 
Therefore, we can write any trek polynomial between two nodes $i$, $j$ as
\[
\begin{split}
    p_{i,j}^{(\GG,c^\ast)} = \lambda^{|S|}\left(\sum_{A\in \mathcal{T}(\textrm{top}(S), \textrm{top}(S))}\lambda^{|A|}\right). 
\end{split}
\]
Hence, characterizing the trek polynomials $p_{i,j}^{(\GG,c^\ast)}$ reduces to understanding two components:
\begin{enumerate}
    \item the length $|S|$ of the unique simple trek $S$, and
    \item the \emph{ancestral polynomial} for any node $k\in [m]$, defined as:
    \begin{equation}\label{eqn: ancestral polynomial}
    A_k = \sum_{T\in \mathcal{T}(k,k)}\lambda^{|T|}.
    \end{equation}
\end{enumerate}

In a polytree $\GG$, every trek $T \in \mathcal{T}(k,k)$ consists of two copies of the same directed path $u\to \cdots \to k$ for some $u \in \textrm{an}_{\GG}(k)$. 
Combining the information above we have the following. 
\begin{proposition}
\label{prop: ancestral polynomial} 
  Let $\GG =([m],E)$ be a polytree with $i,j, k\in [m]$. 
    Then 
    \[
    A_k = \sum_{u\in \textrm{an}_{\GG}(k)} \lambda^{2|d(u,k)|}
    %\]
    \qquad \text{and} \qquad 
    %\[ 
    p_{i,j}^{(\GG,c^\ast)} = \lambda^{|S_{ij}|}A_{\textrm{top}(S_{ij})}, 
    \]
    where $d(u, k)$ denotes the (unique) directed path from $u$ to $k$, and $S_{ij}$ denotes the unique simple trek between $i$ and $j$ in $\GG$.
\end{proposition}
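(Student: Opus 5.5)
The plan is to prove the two identities in order, establishing first the bijective description of $\mathcal{T}(k,k)$ and then the trek decomposition $T \leftrightarrow (A,S)$ for treks between $i$ and $j$. Both rest on one fact about polytrees: between any two nodes there is at most one path in the underlying undirected tree. Consequently, for $u\in\textrm{an}_{\GG}(k)$ the directed path $d(u,k)$ is unique. Throughout, trek monomials under $c^\ast$ are $\omega\lambda^{|P|+|Q|}$, and we drop the common factor $\omega$, as the paper does.

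For the formula for $A_k$, take $T=\{P,Q\}\in\mathcal{T}(k,k)$ with top $u$. Then $P$ and $Q$ are both directed paths from $u$ to $k$, so $u\in\textrm{an}_{\GG}(k)$. By uniqueness of directed paths in a polytree, $P=Q=d(u,k)$. Conversely, every $u\in\textrm{an}_{\GG}(k)$ gives the trek $\{d(u,k),d(u,k)\}$; when $u=k$ this is the trek of length zero. Hence $T\mapsto \textrm{top}(T)$ is a bijection $\mathcal{T}(k,k)\to\textrm{an}_{\GG}(k)$, and $|T|=2|d(u,k)|$. Summing $\lambda^{|T|}$ over $\mathcal{T}(k,k)$ gives the stated expression for $A_k$ by \eqref{eqn: ancestral polynomial}.

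For the formula for $p_{i,j}^{(\GG,c^\ast)}$, the main step is to justify the claimed unique decomposition. This amounts to a bijection between $\mathcal{T}(i,j)$ and $\mathcal{T}(\textrm{top}(S_{ij}),\textrm{top}(S_{ij}))$ under which lengths add $|S_{ij}|$. I expect this to be the only real obstacle.

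\emph{Forward direction.} Let $T=\{P,Q\}$ have top $v$, with $P$ a path $v\to\cdots\to i$ and $Q$ a path $v\to\cdots\to j$. Let $w$ be the last common node of $P$ and $Q$. Both paths are directed and start at $v$, and in a tree the two paths from $v$ agree up to the point where they branch. So the segments of $P$ and $Q$ from $v$ to $w$ coincide, both being $d(v,w)$, and they share no nodes after $w$. The tails from $w$ therefore form a simple trek between $i$ and $j$. Since $\mathcal{S}(i,j)=\{S_{ij}\}$ by uniqueness of undirected paths, we get $w=\textrm{top}(S_{ij})$ and $T$ is determined by $v\in\textrm{an}_{\GG}(w)$. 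In particular $|T|=|S_{ij}|+2|d(v,w)|$.

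\emph{Reverse direction.} Given $v\in\textrm{an}_{\GG}(w)$, prepend $d(v,w)$ to both legs of $S_{ij}$; this gives a trek between $i$ and $j$. The two constructions are mutually inverse.

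Combining these, $T\mapsto v$ is a bijection $\mathcal{T}(i,j)\to\textrm{an}_{\GG}(w)$ with $\lambda^{|T|}=\lambda^{|S_{ij}|}\lambda^{2|d(v,w)|}$. Summing over $\mathcal{T}(i,j)$ and applying the first part yields $p_{i,j}^{(\GG,c^\ast)}=\lambda^{|S_{ij}|}A_{\textrm{top}(S_{ij})}$. If $\mathcal{T}(i,j)=\emptyset$, then no simple trek exists and both sides are interpreted as $0$. This case should be noted, since for a polytree with several sources $S_{ij}$ may fail to exist.
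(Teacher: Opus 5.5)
Your proof is correct and follows essentially the same route as the paper. The paper justifies the proposition only through its preceding remarks: in a polytree every self-trek is a doubled directed path $d(u,k)$, and every $T\in\mathcal{T}(i,j)$ decomposes uniquely as a self-trek at $\mathrm{top}(S_{ij})$ together with $S_{ij}$. Your common-prefix argument and the explicit bijection $T\mapsto\mathrm{top}(T)$ onto $\mathrm{an}_{\GG}(\mathrm{top}(S_{ij}))$ supply the details the paper leaves implicit, and your remark about the case $\mathcal{T}(i,j)=\emptyset$ in polytrees with several sources is a fair caveat that the paper's statement glosses over.
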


% \subsubsection{The poset $P_{\varphi_{\GG,c^\ast}}$ when $\GG$ is a directed tree}
% When the polytree $\GG$ is a directed tree the representation of the poset $P_{\varphi_{\GG,c^\ast}}$ in Example~\ref{ex: polytree poset} simplifies. 
Let $\GG = ([m], E)$ be a directed tree with unique source node $r$. 
Then for $k\in[m]$, the ancestral polynomial $A_k$ in~\eqref{eqn: ancestral polynomial} has the form $1 + \sum_{s=1}^{h} \lambda^{2s}$, where $h$ is the length of the unique directed path in $\GG$ from $r$ to $k$ (e.g.~$h$ is the height of the node $k$ above the source $r$ in $\GG$).  
Hence, the trek polynomials $p_{i,j}^{(\GG,c^\ast)}$ belong to the following class of polynomials. % contained in $\mathcal P$. 

\begin{definition}\label{def: trek polynomials for directed trees}
    Let $\mathcal{D}$ denote the collection of polynomials of the form
    \[
    p_{w,h} = \lambda^w \left(1 + \sum_{s=1}^{h} \lambda^{2s}\right),
    \]
    for some $w \in \mathbb{Z}_{\geq 0}$, called the \emph{width} of $p_{w,h}$, and some $h \in \mathbb{Z}_{\geq 0}$, called the \emph{height} of $p_{w,h}$ when $p_{w,h}$ has degree $2h + w$. For a polynomial $p \in \mathcal{D}$ of the form $p_{w,h}$, we define its width $w(p) := w$ and its height $h(p) := h$.
\end{definition}

Let $\GG = ([m],E)$ be a directed tree, with $i,j,k,\ell\in [m]$, and let $a = t(i,j), b= t(k,\ell)$. 
Since the path from any node $u$ to the root is unique in a directed tree, there is exactly one ancestor at any valid distance $h$. Consequently, the set of treks $\mathcal{T}(u,u)$ contains exactly one unique trek of length $2s$ for each $0 \leq h \leq h_{\GG}(u)$. This implies the non-zero coefficients of the ancestral polynomials are all $1$, so we clearly have $p_a, p_b\in\mathcal D$.  
However, we also have the inclusion $p_{a\wedge b}\in\mathcal D$ whenever $p_{a\wedge b}\neq 0$, as captured by the following lemma.
% Here, we also have the inclusion property $p_{a\wedge b}\in\mathcal D$ whenever $a = t(i,j), b= t(k,\ell)$ for nodes $i,j,k,\ell$ in a constantly colored directed tree $(\GG,c^\ast)$. 

Let $p_1, p_2$ be two polynomials. We denote their Hadamard product (obtained by term-by-term multiplication) by $p_1 \ast p_2$.

\begin{lemma}
    \label{lem: directed tree meet polynomial}
    Let $p_a, p_b\in \mathcal{D}$ with coefficient vectors $a$ and $b$. % and $p := p_{1} \ast p_2$.
    If $p_{a\wedge b} \neq 0$ then  $p_{a\wedge b} \in \mathcal D$ with 
    \begin{enumerate}
        \item $w(p_{a\wedge b}) = \textrm{max}(w(p_a), w(p_b))$, and 
        \item $h(p_{a\wedge b}) = \frac{1}{2}(\textrm{min}(w(p_a)+2h(p_a), w(p_b)+2h(p_b)) - w(p_{a\wedge b}))$. 
    \end{enumerate}
    %Let $\GG = (V,E)$ be a directed tree, and let $a = t(u,v)$, $b = t(i,j)$ for some $u,v,i,j\in V$. 
    %If $p_{a\wedge b} \neq 0$ then  $p_{a\wedge b}\in \mathcal D$ with 
    %\begin{enumerate}
    %    \item $w(p_{a\wedge b}) = \textrm{max}(w(p_a), w(p_b))$, and 
    %    \item $h(p_{a\wedge b}) = \frac{1}{2}(\textrm{min}(w(p_a)+2h(p_a), w(p_b)+2h(p_b)) - w(p_{a\wedge b}))$. 
    %\end{enumerate}
\end{lemma}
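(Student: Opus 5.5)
The plan is to turn the statement into a statement about the supports of $p_a$ and $p_b$. By Definition~\ref{def: trek polynomials for directed trees}, a polynomial $p_{w,h}\in\mathcal D$ has every nonzero coefficient equal to $1$. Its support, viewed through exponents of $\lambda$, is the set
\[
E(w,h)=\{w,\,w+2,\,w+4,\ldots,\,w+2h\},
\]
an arithmetic progression of step $2$ running from $w$ to $w+2h$. So the coefficient vectors $a$ and $b$ are $0/1$-vectors in $\mathbb{Z}_{\geq 0}^{\mathcal T}$. The coordinate-wise minimum of two $0/1$-vectors is the indicator vector of the intersection of their supports. Hence $p_{a\wedge b}=\sum_{k\in E_a\cap E_b}\lambda^k$, where $E_a=E(w(p_a),h(p_a))$ and $E_b=E(w(p_b),h(p_b))$.

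The key step is computing $E_a\cap E_b$. Write $w_a=w(p_a)$, $t_a=w(p_a)+2h(p_a)$, and similarly $w_b$, $t_b$. Then $E_a$ consists of the integers in $[w_a,t_a]$ congruent to $w_a$ mod $2$, and likewise for $E_b$. If $w_a\not\equiv w_b \pmod 2$, the two sets contain integers of opposite parity, so the intersection is empty and $p_{a\wedge b}=0$. This case is excluded by hypothesis.

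So assume $w_a\equiv w_b\pmod 2$. Then $t_a$, $t_b$, $\max(w_a,w_b)$ and $\min(t_a,t_b)$ all share this parity. The intersection is therefore the set of integers of that parity in $[\max(w_a,w_b),\min(t_a,t_b)]$. If $\max(w_a,w_b)>\min(t_a,t_b)$, this interval contains no integers and $p_{a\wedge b}=0$, again excluded. Otherwise the intersection is the full step-$2$ progression from $w:=\max(w_a,w_b)$ to $\min(t_a,t_b)$, all with coefficient $1$. This means
\[
p_{a\wedge b}=\lambda^{w}\Bigl(1+\sum_{s=1}^{h}\lambda^{2s}\Bigr)\in\mathcal D
\qquad\text{with}\qquad
h=\tfrac12\bigl(\min(t_a,t_b)-w\bigr),
\]
which is exactly claims (1) and (2). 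Here $h$ is a nonnegative integer by the parity observation.

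The only delicate point is this parity and interval bookkeeping. One must check that a nonempty intersection of two step-$2$ progressions is again a full progression with no gaps. This holds because both progressions lie on the same parity class and each contains every integer of that class in its range. The endpoints of the intersection are then forced to be $\max(w_a,w_b)$ and $\min(t_a,t_b)$. No structure of the tree beyond $p_a,p_b\in\mathcal D$ is needed.
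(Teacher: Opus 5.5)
Your proof is correct and follows essentially the same route as the paper: both use that all coefficients are $0$ or $1$ to identify $p_{a\wedge b}$ with the indicator of the intersection of the two step-$2$ exponent progressions, and both read off the width and height from the endpoints $\max(w(p_a),w(p_b))$ and $\min(w(p_a)+2h(p_a),\,w(p_b)+2h(p_b))$. You also check explicitly that the intersection of two same-parity progressions has no gaps, a detail the paper leaves implicit.
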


\begin{proof} 
By Definition~\ref{def: trek polynomials for directed trees}, any polynomial $q \in \mathcal{D}$ has the form $q(\lambda) = \sum_{k=0}^{h(q)} \lambda^{w(q) + 2k}$. Thus, the exponents of $q$ form the set $\{ w(q), w(q)+2, \dots, w(q)+2h(q) \}$.
Since the coefficients of these polynomials are all either $0$ or $1$, the Hadamard product $p_a \ast p_b$ corresponds to the polynomial whose exponents lie in the intersection of the exponent sets of $p_a$ and $p_b$, which is precisely $p_{a\wedge b}$. This intersection is non-empty (i.e., $p_{a\wedge b} \neq 0$) if and only if the exponent intervals overlap and the widths $w(p_a)$ and $w(p_b)$ have the same parity.
In that case, the smallest exponent in the intersection (the new width) is the maximum of the two starting values: $w(p_{a\wedge b}) = \max(w(p_a), w(p_b)).$ The largest exponent in the intersection is the minimum of the two ending values: $\min(w(p_a) + 2h(p_a), \; w(p_b) + 2h(p_b)).$ 
Since the new height is half the difference between the highest and lowest exponents, condition (2) follows immediately.
\end{proof}

Lemma~\ref{lem: directed tree meet polynomial} yields the following proposition. 

\begin{proposition}
    \label{prop: directed tree closure}
    Let $\GG = ([m], E)$ be a directed tree and $s\in P_{\varphi_{\GG,c^\ast}}$. 
    Then $p_s\in \mathcal D$. 
\end{proposition}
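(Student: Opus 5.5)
Every element $s$ of $P_{\varphi_{\GG,c^\ast}}$ is, by definition of the poset in Section~\ref{sec: preliminaries}, of the form $s = \bigwedge_{(i,j)\in S} t(i,j)$ for some nonempty set $S$ of pairs of nodes. I will argue by induction on $|S|$, using that the meet is associative and commutative coordinatewise. So $s$ can be built up as an iterated meet $s_1 = t(i_1,j_1)$, $s_{k+1} = s_k \wedge t(i_{k+1},j_{k+1})$.

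For the base case $|S|=1$, Proposition~\ref{prop: ancestral polynomial} applies because $\GG$ is a directed tree. It gives $p_{t(i,j)} = \lambda^{|S_{ij}|}A_{\textrm{top}(S_{ij})}$. Moreover $A_k = 1+\sum_{s=1}^{h}\lambda^{2s}$, where $h$ is the height of $k$ above the unique source, as observed just before Definition~\ref{def: trek polynomials for directed trees}. Hence $p_{t(i,j)} = p_{w,h}\in\mathcal D$ with $w=|S_{ij}|$ and $h$ the height of $\textrm{top}(S_{ij})$.

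For the inductive step, suppose $p_{s_k}\in\mathcal D$. We also have $p_{t(i_{k+1},j_{k+1})}\in\mathcal D$ by the base case. If $p_{s_{k+1}}\neq 0$, then Lemma~\ref{lem: directed tree meet polynomial} applied to $a=s_k$ and $b=t(i_{k+1},j_{k+1})$ gives $p_{s_{k+1}}\in\mathcal D$. It also gives explicit width and height: the width is the maximum of the widths, and the height is determined by the minimum of the degrees. One small point needs checking here: the lemma is stated for arbitrary elements of $\mathcal D$, not just trek polynomials, which is exactly what the induction needs.

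The only real obstacle is the zero case. If at some stage $s_k\wedge t(\cdot,\cdot)=0$, then $p_s=0$, and the zero polynomial is not literally of the form $p_{w,h}$. I would handle this in one of two ways. The first is to read the proposition, as the paper does in Lemma~\ref{lem: directed tree meet polynomial}, as a statement about nonzero $p_s$. The second is to adopt the convention that $0$ is treated alongside $\mathcal D$. In either reading, once a partial meet is $0$ all further meets remain $0$, since meets only decrease coordinates. Therefore every nonzero $s$ is reached through a chain of nonzero partial meets, and the induction above applies to it. This closes the argument.
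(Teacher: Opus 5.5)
Your argument is correct and matches the paper's, which only says that the proposition follows from Lemma~\ref{lem: directed tree meet polynomial}. Trek polynomials lie in $\mathcal D$ by Proposition~\ref{prop: ancestral polynomial}, and iterating the lemma over the meets defining $s$ keeps you in $\mathcal D$; your induction just spells this out, and your zero-case caveat is right, since $0$ is an element of $P_{\varphi_{\GG,c^\ast}}$ (Theorem~\ref{thm: poset characterization}) and the statement should be read for $s\neq 0$, which the paper leaves implicit.
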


Proposition~\ref{prop: directed tree closure} allows us to define height and width of an element $s\in P_{\varphi_{\GG,c^\ast}}$. 
This naturally leads to a definition of height of the directed tree $\GG$.
% These definitions are collected below.
% Since the content vectors of $s\in P_{\varphi_{\GG,c^\ast}}$ are vectors of all $1$s, then we may replace the content of $s\in P_{\varphi_{\GG,c^\ast}}$ with the more simple height parameter. 
% So the analogy of Definition~\ref{def: width and content of poset element} is the following:

\begin{definition}
    \label{def: width and height of directed tree poset element}
    Let $\GG = ([m], E)$ be a directed tree with root node $r$. 
    For $v\in [m]$, define the \emph{height} of $v$, denoted $h_{\GG}(v)$, to be the length of the unique directed path  $d(r,v)$ from $r$ to $v$.  
    The \emph{height} of $\GG$ is the length of the longest directed path in $\GG$. 
    Furthermore, for any element $s\in P_{\varphi_{\GG,c^\ast}}$ corresponding to the polynomial $p_s \in \mathcal{D}$, we define %We also define
    \begin{itemize}
    \item The \emph{height} of $s\in P_{\varphi_{\GG,c^\ast}}$ as $h_{\GG}(s) := h(p_s)$,  
    \item The \emph{width} of $s\in P_{\varphi_{\GG,c^\ast}}$ as $w_{\GG}(s) := w(p_s)$, and 
    \item $h(\GG) := \max_{s\in P_{\varphi_{\GG,c^\ast}}} h_{\GG}(s)$. 
    \end{itemize}
    % For a vertex $v\in[m]$ we will also refer to $h_{\GG}(v) := h_{\GG}(t(v, v))$ as the height of the vertex to simplify notation. 
\end{definition}

The value $h(\GG)$ agrees with the graph-theoretic notion of height in a directed tree, as we show in the following lemma.
% The values $h_{\GG}(t(v,v))$ and $h(\GG)$ agree with the graph-theoretic notions of height in a directed tree given in Definition~\ref{def: width and height of directed tree poset element}.
\begin{lemma}
    \label{lem: realizing height of G}
    Let $\GG = ([m],E)$ be a directed tree with root node $r$. %, and let $d(r,k)$ denote the length of the directed path in $\GG$ from $r$ to $k\in[m]$.  
    Then, for $v\in[m]$, 
    \[
    h_{\GG}(v) = h_{\GG}(t(v,v)) \quad \textrm{and} \quad h(\GG) = \max_{v\in[m]}h_{\GG}(v). %= \max_{k\in[m]}d(r,k).
    \]
\end{lemma}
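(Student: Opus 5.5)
The plan is to prove the two equalities separately, relying on Proposition~\ref{prop: ancestral polynomial}, Lemma~\ref{lem: directed tree meet polynomial}, and the explicit description of $\mathcal D$ from Definition~\ref{def: trek polynomials for directed trees}.

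For the first equality I will compute $p_{t(v,v)} = p_{v,v}^{(\GG,\cast)}$ directly. The unique simple trek between $v$ and itself is the trivial trek with top $v$, so $|S_{vv}|=0$ and Proposition~\ref{prop: ancestral polynomial} gives $p_{v,v}^{(\GG,\cast)} = A_v = \sum_{u\in\mathrm{an}_\GG(v)}\lambda^{2|d(u,v)|}$. In a directed tree with root $r$, the ancestors of $v$ are exactly the nodes on the path $d(r,v)$, one at each distance $0,1,\ldots,h_\GG(v)$ from $v$. Hence $A_v = 1+\sum_{s=1}^{h_\GG(v)}\lambda^{2s} = p_{0,h_\GG(v)}$, so $h_\GG(t(v,v)) = h(p_{t(v,v)}) = h_\GG(v)$.

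For the second equality, the first part already gives $\max_v h_\GG(t(v,v)) = \max_v h_\GG(v)$, and since each $t(v,v)\in P_{\varphi_{\GG,\cast}}$ this yields $h(\GG)\geq \max_v h_\GG(v)$. For the reverse inequality I will bound the height of every element of the poset, in two steps.

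\begin{enumerate}
\item \emph{Generators.} Write $S_{ij}$ with top $u$. Proposition~\ref{prop: ancestral polynomial} gives $p_{t(i,j)} = \lambda^{|S_{ij}|}A_u$, so $h(p_{t(i,j)}) = h_\GG(u)$. Since $u$ is an ancestor of $i$, we have $h_\GG(u)\leq h_\GG(i)\leq \max_v h_\GG(v)$.
\item \emph{Meets.} Every $s\in P_{\varphi_{\GG,\cast}}$ is a nonzero iterated meet of generators, or else $s=0$, which contributes no height. By Lemma~\ref{lem: directed tree meet polynomial}, for $p_a,p_b\in\mathcal D$ with nonzero meet,
\[
2h(p_{a\wedge b}) = \min\bigl(w_a+2h_a,\, w_b+2h_b\bigr)-\max(w_a,w_b).
\]
Suppose without loss of generality that the minimum is attained at $a$. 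Then $2h(p_{a\wedge b}) \leq w_a+2h_a-w_a = 2h_a$. So the height of a meet is at most the minimum of the two heights, and by induction on the number of meeted generators every $h_\GG(s)$ is at most $\max_v h_\GG(v)$.
\end{enumerate}

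There is a subtlety in the case $h=0$. Definition~\ref{def: trek polynomials for directed trees} ties the height to the degree, so I should make sure that a monomial $\lambda^w$ is read as $p_{w,0}$. This reading is harmless. Taking the maximum in the definition of $h(\GG)$ over nonzero elements only, or assigning height $0$ to the zero element, does not change the conclusion. I do not expect a real obstacle here; the only care needed is the inequality in the meet step.
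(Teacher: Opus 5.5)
Your proposal is correct and follows essentially the same route as the paper: the first equality comes from Proposition~\ref{prop: ancestral polynomial}, and the second from the fact, via Lemma~\ref{lem: directed tree meet polynomial}, that a meet never has larger height than its arguments, so every element's height is bounded by that of some trek polynomial. You are more explicit than the paper about the lower bound through $t(v,v)\in P_{\varphi_{\GG,\cast}}$ and about the bound $h_{\GG}(\mathrm{top}(S_{ij}))\le h_{\GG}(i)$ for generators; note also that your ``without loss of generality'' step only shows $h(p_{a\wedge b})\le h_a$, but the symmetric estimate gives $h(p_{a\wedge b})\le h_b$ as well, and either bound suffices for the induction.
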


\begin{proof}
    The equality $h_{\GG}(v) = h_{\GG}(t(v,v))$ for $v\in[m]$ follows from Proposition~\ref{prop: ancestral polynomial}. 
    To see that $h(\GG) = \max_{v\in[m]}h_{\GG}(v)$, let $s, s'\in P_{\varphi_{\GG,c^\ast}}$. 
    The corresponding polynomials in $\mathcal D$ are $p_s = \sum_{i=0}^{h(p_s)}\lambda^{w(p_s) + 2i}$ and $p_{s'} =\sum_{i=0}^{h(p_{s'})}\lambda^{w(p_{s'}) + 2i}$, and  $p_{s\wedge s'}$ is the Hadamard product of $p_s$ and $p_{s'}$.
    It follows by Lemma \ref{lem: directed tree meet polynomial} that the height of the Hadamard product  $p_{s\wedge s'}$ is upper bounded by the heights of $p_s,p_{s'}$.
    %\textcolor{red}{Since the height of $p\in \mathcal D$ is the half the difference of the degree of $p$ and $k$ where $[\lambda^k]p$ is the first nonzero coefficient of $p$, it follows that the height of the Hadamard product is upper bounded by the heights of $p_s,p_{s'}$.} \marina{just by Lemma 4.4}
    %\marina{[Suggestion] By Lemma .. $h(p) = min(h(p_s) + 1/2$}
    For $s\in P_{\varphi_{\GG,c^\ast}}$, the polynomial $p_s$ is the Hadamard product of a set of trek polynomials $p_{i,j}^{(\GG, c^\ast)}$. 
    So the height of $s$ is upper bounded by the maximum height obtained by the trek polynomials.  
    % The polynomial for $p_{s\wedge s'}$ has coefficient vector given as the coordinate-wise minimum of the coefficient vectors of these two polynomials. 
    % Hence, the height of $p_{s\wedge s'}$ is upper-bounded by the heights of the polynomials $p_s, p_{s'}$. 
    % By the definition of the poset $P_{\varphi_{\GG,c^\ast}}$, it follows that the height of any $s\in P_{\varphi_{\GG,c^\ast}}$ is upper-bounded by the maximum height of the $p_{t(i,j)}$ for $i,j\in[m]$. 
    This value is $\max_{v\in[m]}h_{\GG}(v)$, which is the height of $\GG$. 
\end{proof}

If $\GG$ is a directed tree, then every element $s\in P_{\varphi_{\GG,c^\ast}}$ can be represented as $[h]^w$ where $h$ and $w$ are, respectively, the height and width of $s$. 
We illustrate this in the following example. 

\begin{example}\label{ex: directed tree poset}
    \begin{figure}[t]   
    \centering
        \begin{subfigure}[b]{0.3\textwidth}
         \centering
            \begin{tikzpicture}
                \node[circle, draw, minimum size=1pt, inner sep=2pt] (1) at (1, 1) {1};
                \node[circle, draw, minimum size=2pt, inner sep=2pt] (2) at (0, 0) {2};
                \node[circle, draw, minimum size=2pt, inner sep=2pt] (3) at (0, -1) {3};
                \node[circle, draw, minimum size=2pt, inner sep=2pt] (4) at (2, 0) {4};
    
                \draw[->,  >=stealth] (1) edge (2);
                \draw[->,  >=stealth] (1) edge (4);
                \draw[->,  >=stealth] (2) edge (3);
            \end{tikzpicture}
            %\caption{A directed tree $\GG$.}
            %\label{fig: polytree not pi}
        \end{subfigure}
        \begin{subfigure}[b]{0.3\textwidth}
        \centering
            \begin{tikzpicture}[xscale=1, yscale=1]
                % Nodes even
                \node (0-0) at (0, 0) {$[0]^0$};
                \node (1-0) at (0, 1) {$[1]^0$};
                \node (2-0) at (0, 2) {$[2]^0$};

                \node (0-2) at (1, 0) {$[0]^2$};

                % Nodes odd
                \node (0-1)   at (-1,   0) {$[0]^1$};
                \node (1-1)   at (-1,   1) {$[1]^1$};

                \node (0-3)   at (-2,   0) {$[0]^3$};

                % Nodes empty set
                \node (empty) at (-0.5, -1) {$0$};
            
                % Edges (Cover Relations)
                \draw (2-0) edge (1-0);
                
                \draw (1-0) edge (0-0);
                \draw (1-0) edge (0-2);
                
                \draw (1-1) edge (0-1);
                \draw (1-1) edge (0-3);
                
                \draw (0-3) edge (empty);
                \draw (0-1) edge (empty);
                \draw (0-0) edge (empty);
                \draw (0-2) edge (empty);
            \end{tikzpicture}
        \end{subfigure}

        \caption{A directed tree $\GG$ on the left and the poset $P_{\GG,c^*}$ on the right.}
        \label{fig: example notation poset}
    \end{figure}
    
    Consider the directed tree $\GG$ with vertex set $[4] =\{1,2,3,4\}$ and edges $E=\{(1, 2), (1, 4), (2, 3)\}$, depicted in Figure~\ref{fig: example notation poset} (left). 
    The poset $P_{\GG, c^\ast}$, depicted to its right, is a $\pi$-system, since the ground set of $P_{\varphi_{\GG,c^\ast}}$ consists of the set of trek vectors $t(i,j)$ for $i,j\in [m]$ and the $0$-vector. 
    The identification of the trek vectors $t(i,j)$ with the elements $[h]^w$ is given by:
    \begin{align*}
        t(1,1) &\sim [0]^0,          & t(1,2) = t(1,4) & \sim [0]^1, \\
        t(2,2) = t(4,4) &\sim [1]^0, & t(2,3) & \sim [1]^1,\\
        t(3,3) &\sim [2]^0,          & t(3,4) & \sim [0]^3,\\ 
        t(1,3) = t(2,4) &\sim [0]^2.          
    \end{align*} 
    Note that elements in the column of the Hasse diagram of $P_{\varphi_{\GG,c^\ast}}$ with width $0$ correspond to trek vectors of the form $t(i, i)$. 
    Also note that elements in the column with width $1$ correspond to edges in the graph $1\to 2$, $1\to 4$ and $2\to 3$.
\end{example}

We visualize the poset $P_{\GG,c^*}$ as a grid structure as in Example~\ref{ex: directed tree poset}. 
The rows are indexed by heights $[h]$ starting with $[0]$ at the bottom and increasing upwards. 
The columns are indexed by widths $w$ and arranged by parity relative to a central axis: even widths $(0,2,4,\ldots)$ to the right and odd widths $(1,3,5,\ldots)$ going to the left. 
See Figure~\ref{fig: example notation poset} (right).

A \emph{diagonal} of the poset $P_{\varphi_{\GG,c^\ast}}$ consists of the elements lying on a line with slope $\pm 1$ extending downwards and outwards from the central axis of the grid. 
That is, the \emph{$d$-th diagonal} of $P_{\varphi_{\GG,c^\ast}}$ is the collection $\{[h]^w : 2h + w = d\}$.
% Specifically, diagonals on the even (right) side slope downwards to the right, while those on the odd (left) side slope downwards to the left. 
In the poset of Figure~\ref{fig: example notation poset} (right) $[1]^0$ and $[0]^2$, or $[1]^1$ and $[0]^3$ each lie on the same diagonal, however, $[2]^0$ and $[1]^1$ do not.
Note that elements on the $d$-th diagonal correspond to polynomials in $p_{w, h}\in \mathcal D$ with degree, e.g., $d = 2h + w$. 

The $[h]^w\in P_{\varphi_{\GG,c^\ast}}$ represents a meet $t(i_1,j_1)\wedge\cdots \wedge t(i_N,j_N)$ of coefficient vectors of trek polynomials $p_{i_k,j_k}^{(\GG,c^\ast)}$ for the colored DAG $(\GG,c^\ast)$.  
There may exist vertices $i',j'$ in $\GG$ such that $t(i',j') = t(i_1,j_1)\wedge\cdots \wedge t(i_N,j_N)$, in which case $[h]^w$ represents a subgraph of $\GG$; namely the simple trek $S_{i'j'}$ with top node $k$ at height $h = h_{\GG}(k)$ in $\GG$. %corresponds to a vertex $k$ at height $h$ over the source node of $\GG$ and a simple trek $S_{i',j'}$ with top $k$. 
However, not all $[h]^w$ have this property. 
%\textcolor{red}{The following lemma shows... }\marina{[suggestion:]}
% Before analyzing the full poset, the following lemma establishes a foundational property of the directed tree itself: 
The following lemma establishes a closure property that detects all $[h]^w\in P_{\varphi_{\GG,c^\ast}}$ represented by simple treks $S_{i'j'}$ in $\GG$. % in the graph $(\GG,c^\ast)$; e.g., for which $p_{w,h} = p_{i',j'}^{(\GG,c^\ast)}$ for some $i',j'$ in $\GG$.  
Specifically, 
% For all $0 \leq h \leq h(\GG)$, we let $w_\GG(h)$ denote the maximum length of a simple trek in $\GG$ whose top node is at distance $h$ from the source $r$ in $\GG$. 
% Then for all $0\leq w\leq w_\GG(h)$ there exists $i',j'\in \GG$ such that $p_{h,w} = p_{i',j'}^{(\GG,c^\ast)}$. 
% Hence, all $[h]^0,\ldots, [h]^{w_\GG(h)}$ are realized by treks in $\GG$. 
% So finding the $[h]^w$ realized by treks in $\GG$ reduces to finding the maximum length $w_\GG(h)$ of a simple trek in $\GG$ with top that is at distance $h$ above the source node $r$ in $\GG$. 
% 
at any given height $0\leq h\leq h(\GG)$, the simple treks in $\GG$ represent any $[h]^w\in P_{\varphi_{\GG,c^\ast}}$ with width $w$ ranging from $0$ up to the maximum length of a simple trek with top node at height $h$ in $\GG$. 
This maximum length is denoted as 
\begin{equation}
    \label{eqn: width of a vertex}
    w_{\GG}(h) := \max_{s = t(i,j):\ h_{\GG}(s)=h} w_{\GG}(s), 
\end{equation}
and we call $w_{\GG}(h)$ the \emph{width} of height $h$ in $\GG$.

\begin{lemma}
    \label{lem: simplicial structure}
    Let $\GG = ([m],E)$ be a directed tree, and let $0 \leq h \leq h(\GG)$. 
    For all $0\leq w \leq w_{\GG}(h)$ 
    there exist $i,j\in [m]$ such that $p_{i,j}^{(\GG,c^\ast)}\in\mathcal D$ satisfies
    \begin{enumerate}
        \item $w\left(p_{i,j}^{(\GG,c^\ast)}\right) = w$, and 
        \item $h\left(p_{i,j}^{(\GG,c^\ast)}\right) = h$.
    \end{enumerate}
\end{lemma}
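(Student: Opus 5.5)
Fix a height $h$ with $0\le h\le h(\GG)$ and set $W=w_{\GG}(h)$. By the definition~\eqref{eqn: width of a vertex}, there are nodes $i_0,j_0$ such that $t(i_0,j_0)$ has height $h$ and width $W$. By Proposition~\ref{prop: ancestral polynomial}, $p_{i_0,j_0}^{(\GG,c^\ast)}=\lambda^{|S|}A_k$, where $S=S_{i_0j_0}$ is the simple trek between $i_0$ and $j_0$ and $k=\mathrm{top}(S)$. Since $A_k=1+\sum_{s=1}^{h_{\GG}(k)}\lambda^{2s}$, the width of $p_{i_0,j_0}^{(\GG,c^\ast)}$ is $|S|$ and its height is $h_{\GG}(k)$. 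So $|S|=W$ and the top $k$ of $S$ lies at height $h$ in $\GG$.

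The plan is to shrink $S$ one edge at a time while keeping its top fixed. Write $S=\{P,Q\}$ with $P=d(k,i_0)$ and $Q=d(k,j_0)$, so that $|P|+|Q|=W$. For $0\le w\le W$, I would choose a length $a$ with $0\le a\le |P|$ and $\max(0,w-|P|)\le w-a\le |Q|$; for instance, take $a=\min(w,|P|)$. Let $i$ be the node on $P$ at distance $a$ from $k$, and let $j$ be the node on $Q$ at distance $w-a$ from $k$. The truncated paths $P'\subseteq P$ and $Q'\subseteq Q$ share only the node $k$, because $P$ and $Q$ do. Hence $\{P',Q'\}$ is a simple trek between $i$ and $j$ with top $k$, and by uniqueness of simple treks in a polytree it equals $S_{ij}$.

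Applying Proposition~\ref{prop: ancestral polynomial} again gives $p_{i,j}^{(\GG,c^\ast)}=\lambda^{w}A_k=p_{w,h}\in\mathcal D$. This polynomial has width $w$ and height $h_{\GG}(k)=h$, as required. The case $w=0$ is included: it gives $i=j=k$ and the trek polynomial $A_k$.

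The only delicate point I expect is justifying that the truncated pair $\{P',Q'\}$ really is \emph{the} simple trek $S_{ij}$. This requires that no other top is possible. In a polytree the underlying tree has a unique undirected path between $i$ and $j$, and that path is $P'$ reversed followed by $Q'$; so the simple trek is forced. One should also note that $w_{\GG}(h)$ is defined as a maximum over a nonempty set: the set is nonempty because some node lies at every height $h\le h(\GG)$, since the path realizing the height of $\GG$ passes through all heights. That node $v$ gives $t(v,v)$ of height $h$ and width $0$, so the set in~\eqref{eqn: width of a vertex} is nonempty.
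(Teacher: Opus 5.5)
Your proof is correct and follows the same route as the paper. You take a simple trek of maximal length $w_{\GG}(h)$ whose top lies at height $h$, truncate its two legs toward the top to get any length $w\le w_{\GG}(h)$ with the same top, and apply Proposition~\ref{prop: ancestral polynomial} to get $p_{i,j}^{(\GG,c^\ast)}=\lambda^{w}A_k$; your explicit choice $a=\min(w,|P|)$, the uniqueness argument identifying the truncated pair with $S_{ij}$, and the nonemptiness check fill in details the paper leaves implicit.
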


\begin{proof}
    Firstly, since $h\leq h(\GG)$, there is at least one vertex in $\GG$ with height $h$, so $w_{\GG}(h)\geq 0$. 
    By definition of $w_{\GG}(h)$ there exist $u,v\in V$ such that $\textrm{top}(S_{uv})$ has height $h$ in $\GG$ and the number of edges in $S_{uv}$ is equal to $w_{\GG}(h)$. 
    Thus, since $\GG$ is a directed tree
    \[
    p_{u,v}^{(\GG,c^\ast)} = \lambda^{w_{\GG}(h)}(1 +  \cdots + \lambda^{2h}) \in\mathcal{D}. 
    \]
    We may then take vertices $i,j\in [m]$ on the simple trek $S_{uv}$, such that $\textrm{top}(S_{ij}) = \textrm{top}(S_{uv})$ and the number of edges in $S_{ij}$ is equal to $w$ by choosing $i, j$ closer to the top vertex.  
    Hence, the number of edges in $S_{ij}$ is equal to $w$, and we obtain 
    $
    p_{i,j}^{(\GG,c^\ast)} = \lambda^{w}(1 + \cdots + \lambda^{2h}). 
    $
\end{proof}

Summarizing, Lemmas~\ref{lem: realizing height of G} and~\ref{lem: simplicial structure} tell us which features of the poset $P_{\varphi_{\GG,c^\ast}}$ are represented by subgraphs of $\GG$.  
Lemma~\ref{lem: realizing height of G} tells us that for every $0\leq h \leq h(\GG)$ there is a node in $\GG$ whose distance from the source $r$ is equal to $h$. 
Hence, the $h$ in every $[h]^w\in P_{\varphi_{\GG,c^\ast}}$ is realized by a subgraph of $\GG$; namely, a vertex at distance $h$ from $r$. 
Lemma~\ref{lem: simplicial structure} extends this to describe the $w$ in $[h]^w$ that are graphically realized.  
It says that all $[h]^w\in P_{\varphi_{\GG,c^\ast}}$ represented by a subgraph of $\GG$ (namely, a simple trek) can be found easily: 
For every $0\leq h\leq h(\GG)$, find a simple trek $S_{ij}$ with $\textrm{top}(S_{ij})$ at height $h$ in $\GG$ and maximum length with respect to this property.  
By~\eqref{eqn: width of a vertex}, the length of $S_{ij}$ is $w_{\GG}(h)$, and for every $[h]^0,\ldots, [h]^{w_{\GG}(h)}\in P_{\varphi_{\GG,c^\ast}}$ there exists $i,j\in [m]$ such that $p_{w,h} = p_{i,j}^{(\GG,c^\ast)}$. 
Moreover, for any $w>w_{\GG}(h)$, there is no $i,j\in[m]$ such that $p_{w,h} = p_{i,j}^{(\GG,c^\ast)}$.
The following is a simple example. 

\begin{example}
    \label{ex: closure lemma}
    \begin{figure}[t]
    \centering
    \begin{tikzpicture}[scale = 0.75]
            \node[circle, draw, minimum size=1pt, inner sep=1pt] (1) at (0,0) {1};
            \node[circle, draw, minimum size=1pt, inner sep=1pt] (2) at (0, -1) {2};
            \node[circle, draw, minimum size=1pt, inner sep=1pt] (3) at (-1, -2) {3};
            \node[circle, draw, minimum size=1pt, inner sep=1pt] (4) at (1, -2) {4};
            \node[circle, draw, minimum size=1pt, inner sep=1pt] (5) at (-1, -3) {5};
            \node[circle, draw, minimum size=1pt, inner sep=1pt] (6) at (1, -3) {6};

            \draw[->,  >=stealth] (1) edge (2);
            \draw[->,  >=stealth] (2) edge (3);
            \draw[->,  >=stealth] (2) edge (4);
            \draw[->,  >=stealth] (3) edge (5);
            \draw[->,  >=stealth] (4) edge (6);

            \node[circle, draw, minimum size=1pt, inner sep=1pt] (7) at (5,0) {1};
            \node[circle, draw, minimum size=1pt, inner sep=1pt] (8) at (5, -1) {2};
            \node[circle, draw, minimum size=1pt, inner sep=1pt] (9) at (4, -2) {3};
            \node[circle, draw, minimum size=1pt, inner sep=1pt] (10) at (6, -2) {4};
            \node[circle, draw, minimum size=1pt, inner sep=1pt] (11) at (4, -3) {5};

            \draw[->,  >=stealth] (7) edge (8);
            \draw[->,  >=stealth] (8) edge (9);
            \draw[->,  >=stealth] (8) edge (10);
            \draw[->,  >=stealth] (9) edge (11);
    \end{tikzpicture}
    \caption{The graph on the left is not a $\pi$-graph, however the graph on the right is.}
    \label{fig:notpi}
    \end{figure}
    Consider the two directed trees depicted in Figure~\ref{fig:notpi}, with corresponding posets shown in Figure~\ref{fig: example posets pi/not pi graphs}. 
    Both graphs have $h(\GG) = 3$. 
    In the graph on the right, the vertices $\{1\}, \{2\}, \{3,4\}$ and $\{5\}$ have, respectively, heights $0, 1, 2$ and $3$ corresponding to the length of the directed path from $1$ to each vertex. 
    Hence, the maximum length simple trek with height $3$ must have top node $5$, so this is the empty trek $T = (\emptyset, \emptyset)$, which has length $0$.  This represents the poset element $[3]^0$. 
    Similarly, the maximum length simple trek with top node at height $2$ is $T = (\{3\to 5\}, \emptyset)$, which has length $1$.
    This corresponds to the element $[2]^1$, whereas $[2]^0$ is graphically represented by the two treks $T =(\emptyset,\emptyset)$ with top nodes $3$ and $4$. 
    At height $0$, the maximum length simple trek with top node $1$ is $T =(\{1\to 2, 2 \to 3, 3\to 5\}, \emptyset)$, which represents $[0]^3$.  
    Lemma~\ref{lem: simplicial structure} captures that each of $[0]^0, [0]^1, [0]^2$ are also realized by subtreks of this trek. 

    Note that the trek $T =(\{1\to 2, 2 \to 3, 3\to 5\}, \emptyset)$ is also the maximum length simple trek with top node $1$ in the graph on the left in Figure~\ref{fig:notpi}. However, this poset contains the element $[0]^4$.  Since $3< 4$ and $3$ is the length of this trek, then $[0]^4$ is not given by a subgraph of $\GG$. However, it is in the poset since
    \[
    t(5,6)\wedge t(3,4) = (0,0,0,0,1,0,1) \wedge (0,0,1,0,1,0,0) = (0,0,0,0,1,0,0),
    \]
    which is the coefficient vector of $p_{4,0}^{\GG,c^\ast}$. 
    
    \begin{figure}[t]  
        \centering
        \begin{subfigure}[b]{0.45\textwidth}
        \centering
            \begin{tikzpicture}[xscale=1.2, yscale=1]
                % Nodes even
                \node (0-0) at (0, 0) {$[0]^0$};
                \node (1-0) at (0, 1) {$[1]^0$};
                \node (2-0) at (0, 2) {$[2]^0$};
                \node (3-0) at (0, 3) {$[3]^0$};

                \node (0-2) at (1, 0) {$[0]^2$};
                \node (1-2) at (1, 1) {$[1]^2$};

                \node (0-4) at (2, 0) {\textcolor{red}{$[0]^4$}};
                \node (1-4) at (2, 1) {$[1]^4$};

                % Nodes odd
                \node (0-1)   at (-1,   0) {$[0]^1$};
                \node (1-1)   at (-1,   1) {$[1]^1$};
                \node (2-1)   at (-1,   2) {$[2]^1$};

                \node (0-3)   at (-2,   0) {$[0]^3$};
                \node (1-3)   at (-2,   1) {$[1]^3$};

                % Nodes empty set
                \node (empty) at (-0.5, -1) {$0$};
            
                % Edges (Cover Relations)
                \draw (3-0) edge (2-0);
                \draw (2-0) edge (1-0);
                \draw (1-0) edge (0-0);

                \draw (1-2) edge (0-2);
                \draw (1-4) edge (0-4);

                \draw (1-0) edge (0-2);
                \draw (2-0) edge (1-2);
                \draw (1-2) edge (0-4);
                \draw (3-0) edge (1-4);

                \draw (2-1) edge (1-1);
                \draw (1-1) edge (0-1);

                \draw (1-3) edge (0-3);

                \draw (1-1) edge (0-3);
                \draw (2-1) edge (1-3);
                
                \draw (0-0) edge (empty);
                \draw (0-1) edge (empty);
                \draw (0-2) edge (empty);
                \draw (0-3) edge (empty);
                \draw (0-4) edge (empty);
            \end{tikzpicture}
        \end{subfigure}
        \begin{subfigure}[b]{0.45\textwidth}
        \centering
        \begin{tikzpicture}[xscale=1.2, yscale=1]
                % Nodes even
                \node (0-0) at (0, 0) {$[0]^0$};
                \node (1-0) at (0, 1) {$[1]^0$};
                \node (2-0) at (0, 2) {$[2]^0$};
                \node (3-0) at (0, 3) {$[3]^0$};

                \node (0-2) at (1, 0) {$[0]^2$};
                \node (1-2) at (1, 1) {$[1]^2$};

                % Nodes odd
                \node (0-1)   at (-1,   0) {$[0]^1$};
                \node (1-1)   at (-1,   1) {$[1]^1$};
                \node (2-1)   at (-1,   2) {$[2]^1$};

                \node (0-3)   at (-2,   0) {$[0]^3$};
                \node (1-3)   at (-2,   1) {$[1]^3$};

                % Nodes empty set
                \node (empty) at (-0.5, -1) {$0$};
            
                % Edges (Cover Relations)
                \draw (3-0) edge (2-0);
                \draw (2-0) edge (1-0);
                \draw (1-0) edge (0-0);

                \draw (1-2) edge (0-2);

                \draw (1-0) edge (0-2);
                \draw (2-0) edge (1-2);

                \draw (2-1) edge (1-1);
                \draw (1-1) edge (0-1);

                \draw (1-3) edge (0-3);

                \draw (1-1) edge (0-3);
                \draw (2-1) edge (1-3);
                
                \draw (0-0) edge (empty);
                \draw (0-1) edge (empty);
                \draw (0-2) edge (empty);
                \draw (0-3) edge (empty);
            \end{tikzpicture}
        \end{subfigure}

        \caption{The posets $P_{\varphi_{\GG,c^\ast}}$ (left) and $P_{\varphi_{\HH,c^\ast}}$ (right), associated with the respective directed trees from Figure~\ref{fig:notpi}.}
        \label{fig: example posets pi/not pi graphs}
    \end{figure}
\end{example}

The following proposition collects a few simple properties of the poset $P_{\varphi_{\GG,c^\ast}}$. 
Combined with Lemma~\ref{lem: simplicial structure}, they tell us how to construct the poset $P_{\varphi_{\GG,c^\ast}}$ for any directed tree $\GG$. 

\begin{proposition}
\label{prop: properties of the poset}
Let $\GG=([m], E)$ be a directed tree. % with the constant coloring $c^\ast$, and recall that $h(\GG)$ denotes the length of the longest directed path in $\GG$. 
The poset $P_{\GG, c^\ast}$ satisfies the following properties:
    \begin{enumerate}[label=(\arabic*), ref=(\arabic*)]
        \item\label{item: columns} The column in $P_{\varphi_{\GG,c^\ast}}$ for width $w=0$ contains $h(\GG)+1$ rows, while the column in $P_{\varphi_{\GG,c^\ast}}$ for width $w=1$ contains $h(\GG)$ rows. The number of elements in the column for width $w$ is at least $h(\GG)-w+1$. 
        \item\label{item: rows}  The number of elements in the row for height $h$ is between $h(\GG)-h+1$ and $2(h(\GG)-h)+1$. 
        \item \label{item: shape} If $[h]^w\in P_{\varphi_{\GG,c^\ast}}$, then $[h]^{w'}\in P_{\varphi_{\GG,c^\ast}}$ for all $w'\leq w$.  %\marina{[This generalizes Lemma \ref{lem: simplicial structure}]} 
        Moreover, the number of elements in the row for height $h$ is at least the number of elements in the row for $h+1$.  
        %(Note also that the poset is quasi-symmetric with respect to the central axis: for any height, the number of elements with odd and even widths differs by at most 1.)
        %\item\label{item: piramid shape} The maximum width is non-increasing with respect to height, that is, for all $h \leq h(\GG)$, $w_\GG(h) \leq w_\GG(h-1)$. \marina{Is this the same as $\pi$-graph? So this property is only true for $\pi$-graphs? What about the rest?}
    % \end{enumerate}
    % Furthermore, the following properties hold regarding the relations among the elements of the poset:
    %  \begin{enumerate}[resume, label=(\arabic*), ref=(\arabic*)]
        \item\label{item: degree} The elements $[h]^w$ and $[h']^{w'}$ lie on the same diagonal if and only if $\mathrm{deg}(p_{w,h}) = \mathrm{deg}(p_{w',h'})$; that is, $w+2h = w'+2h'$. %Moreover, the corresponding polynomials $p_{w,h}$ and $p_{w',h'}$ have the same degree, which equals the quantity $w+2h$.
        \item\label{item: lower cover} Each element $[h]^w$ has at most two elements in its lower cover. The lower cover consists of $[h-1]^w$ and an element of the form $[h-i]^{w+2i}$ which lies on the same diagonal (provided it exists).
    \end{enumerate}
\end{proposition}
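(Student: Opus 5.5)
The plan is to identify each element $s\in P_{\varphi_{\GG,c^\ast}}\setminus\{0\}$ with the exponent set $E(h,w)=\{w,w+2,\ldots,w+2h\}$ of $p_s=p_{w,h}$. By the proof of Lemma~\ref{lem: directed tree meet polynomial}, meets are intersections of these sets. Hence $[h']^{w'}\preceq[h]^w$ if and only if $w'\equiv w \pmod 2$, $w'\geq w$ and $w'+2h'\leq w+2h$. All five items then follow from this order description, together with Lemmas~\ref{lem: realizing height of G} and~\ref{lem: simplicial structure} for realizability. Item~\ref{item: degree} is immediate, since the $d$-th diagonal was defined by $2h+w=d$ and $\deg p_{w,h}=w+2h$.

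\emph{Item~\ref{item: shape}.} Suppose $[h]^w\in P$, so $E(h,w)=\bigcap_k E(h_k,w_k)$ for trek vectors $t(i_k,j_k)$. For $w'<w$ I will show that $[h]^{w'}$ is realized directly. Since $h$ is a minimum of heights of trek vectors by Lemma~\ref{lem: directed tree meet polynomial}(2)-type bounds, there is a vertex at height $\geq h$, hence also a vertex $v$ at height exactly $h$. I take the path from $v$ downward, or use the path from the root region. More cleanly: there is a vertex of height $h$, and I would show $w_\GG(h)\geq w$ by exhibiting a long enough simple trek topped at height $h$. Then Lemma~\ref{lem: simplicial structure} gives all $[h]^{w'}$ with $w'\le w$.

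I expect this step to be the main obstacle. The approach is as follows. Some factor in the meet has top at height exactly $h$ and width $w_k\le w$, and some factor attains the endpoint $w+2h$. I will try to shift the top of that factor's trek: walk from a vertex of height $h$ on one of its paths, using the fact that a trek with top height $h_k\ge h$ and width $w_k$ with $w_k+2h_k\geq w+2h$ contains, after replacing its top by the ancestor at height $h$, a simple trek topped at height $h$ of length at least $w_k+2(h_k-h)\geq w$. This is possible when the two arms go through a common ancestor chain. If this fails in general, I would instead argue by downward induction on $w$: $[h]^{w-2}$ arises from the same meet with one factor replaced by its sub-trek, and parity is handled by removing a single edge. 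For the row monotonicity, the map $[h+1]^w\mapsto[h]^w$ is injective, and its image lies in $P$ by the same top-lowering argument. The reason is that the ancestor one step up of every top gives a trek of height $h$ and width $w+2$, and then item~\ref{item: shape} gives $[h]^w$.

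\emph{Items~\ref{item: columns} and~\ref{item: rows}.} For the column $w=0$, the vectors $t(v,v)$ give $[h]^0$ for every $0\le h\le h(\GG)$ by Lemma~\ref{lem: realizing height of G}, and no larger height exists. For $w=1$, the edges $u\to v$ give $[h_\GG(u)]^1$ with $h_\GG(u)\in\{0,\ldots,h(\GG)-1\}$, and height $h(\GG)$ cannot occur because a vertex of maximal height has no children. For general $w$, the longest path, of length $h(\GG)$, contains simple treks with top at height $h$ and width $h(\GG)-h$. Lemma~\ref{lem: simplicial structure} then gives $[h]^w$ for all $h\le h(\GG)-w$, which is $h(\GG)-w+1$ elements. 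The same construction gives the lower bound in item~\ref{item: rows}. For the upper bound, every $[h]^w$ in $P$ has $w+2h\le 2h(\GG)$, because every trek has length at most $2h(\GG)$ (its top is at height at least $0$ and both arms end at height at most $h(\GG)$). This gives $w\le 2(h(\GG)-h)$, that is, $2(h(\GG)-h)+1$ possible widths.

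\emph{Item~\ref{item: lower cover}.} Using the interval description, the elements strictly below $[h]^w$ are obtained by shrinking the interval $E(h,w)$ from the top or from the bottom. Removing the top exponent gives $[h-1]^w$, which lies in $P$ by item~\ref{item: shape}'s row monotonicity applied at fixed $w$. Removing bottom exponents keeps the degree $w+2h$ fixed, so these candidates lie on the same diagonal. The cover on that side is the element $[h-i]^{w+2i}\in P$ with minimal $i\geq1$. Any other element below $[h]^w$ is contained in one of these two, so there are at most two covers.
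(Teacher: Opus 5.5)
Items (1), (2) and (4) are fine; your degree bound $w+2h\le 2h(\GG)$ is a cleaner route to the upper bound in (2) than the paper's. Item (3), however, has a genuine gap. Your row monotonicity and the claim $[h-1]^w\in P$ in item (5) (with $P=P_{\varphi_{\GG,c^\ast}}$) inherit it.

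\emph{Your target claim is false.} You try to show that $[h]^{w'}$ is a trek vector for every $w'<w$, in your cleaner version via $w_{\GG}(h)\ge w$.
\begin{itemize}
\item In the left tree of Figure~\ref{fig:notpi}, $[0]^4=t(5,6)\wedge t(3,4)\in P$ while $w_{\GG}(0)=3$. Neither factor has its top at height $0$.
\item For the weaker claim, take $r\to t$ with two directed paths $t\to x_1\to x_2\to x_3$ and $t\to y_1\to y_2\to y_3$. Then $[0]^6=t(x_3,y_3)\wedge t(x_2,y_2)$ and $[0]^5=t(x_3,y_2)\wedge t(x_3,t)$ both lie in $P$. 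But $w_{\GG}(0)=4$, so $[0]^5$ is not a trek vector.
\end{itemize}

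\emph{Why top-lowering breaks.} Moving a trek's top up to an ancestor makes both arms share the path to the old top, so the result is not simple. The simple trek of length $w_k+2(h_k-h)$ you need therefore need not exist. The same error sinks ``the ancestor one step up gives width $w+2$'': in the path $r\to a\to b$ we have $[1]^1=t(a,b)\in P$ but $[0]^3\notin P$.

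\emph{Your fallback also fails as stated.} In $[0]^4=t(5,6)\wedge t(3,4)$, shrinking a single factor to a sub-trek yields $[1]^2$ or $0$, never $[0]^2$.

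\emph{The missing ingredient} is the identity $[k]^x\wedge[k]^{x-2}=[k-1]^x$ for $x\ge 2$. It is immediate from your own interval description, since $E(k,x)\cap E(k,x-2)=E(k-1,x)$. The paper uses it as follows.
\begin{itemize}
\item Suppose $[h]^w$ is not a trek vector. The factor of the meet attaining the maximal width $w$ is a trek vector $[h']^w$. Its top exponent is at least $w+2h$, so $h'>h$.
\item Lemma~\ref{lem: simplicial structure} gives all $[h']^x$ with $x\le w$.
\item The identity, together with the full columns $w=0,1$ from item (1), gives all $[h'-1]^x$ with $x\le w$. Descend row by row to height $h$.
\end{itemize}
Row monotonicity then follows from $[h]^W=[h+1]^W\wedge[h+1]^{W-2}$, where $W$ is the largest width in row $h+1$. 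Likewise $[h-1]^w=[h]^w\wedge[h]^{w-2}$ gives the vertical cover in item (5); for $W\le 1$ (respectively $w\le 1$), use the full columns from item (1).

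Alternatively, your fallback can be repaired by shifting \emph{every} factor at once. Replace each factor $E(h_k,w_k)$ by $(E(h_k,w_k)-1)\cap\mathbb{Z}_{\ge 0}$. This set equals $E(h_k,w_k-1)$ if $w_k\ge 1$, and $E(h_k-1,1)$ if $w_k=0$. In both cases it is again a trek vector, by Lemma~\ref{lem: simplicial structure} or item (1). The intersection of these sets is exactly $E(h,w-1)$.
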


\begin{proof}

    We prove each property by analyzing the structure of treks in the directed tree $\GG$ and the form of their corresponding trek polynomials. % in $\mathcal{M}(\GG, c^\ast)$. 
    Recall an element $[h]^w \in P_{\varphi_{\GG,c^\ast}}$ corresponds to a trek polynomial $p_{i,j}\in \mathcal D$ if $w(p_{i,j})=w$ and $h(p_{i,j})=h$. Equivalently, this means the lowest common ancestor of $i$ and $j$ is at height $h$, and the unique simple trek between $i$ and $j$ has length $w$.
    
    \begin{enumerate}[label=(\arabic*), ref=(\arabic*)]
        \item By definition, the height of any element in $P_{\varphi_{\GG,c^\ast}}$ cannot exceed $h(\GG)$. In column $w=0$, the elements are generated by the self-treks $t(v,v)$, which yield $[h_{\GG}(v)]^0$. Because the graph contains at least one path from the root to a leaf of length $h(\GG)$, there exists a vertex at every height from $0$ to $h(\GG)$. Thus, the elements $[h]^0$ are realized for all $h(\GG) + 1$ possible heights, meaning the column $w=0$ contains exactly $h(\GG) + 1$ rows.

        By taking $t(u, v)$ where $u\to v\in E$ we find a trek of width 1 with height equal $h_{\GG}(u)$, that is, elements $[h_{\GG}(u)]^1$. Going down the longest path in $\GG$, $u$ can be at any height from $0$ (the root) down to $h(\GG)-1$ (above a leaf). Thus, the elements $[h]^1$ are realized for all $0\leq h\leq h(\GG)-1$, implying that there are exactly $h(\GG)$ rows in the column $w=1$.

        More generally, consider $w\leq h(\GG)$. Since the length of the longest path in $\GG$ is $h(\GG)$, we can find vertices $u, v\in 
        V$, with $h_{\GG}(u)\leq h_{\GG}(v)$, lying on the longest path in $\GG$, such that the distance between them equals $w$. The height of $u$ can vary from $0$ to $h(\GG)-w$. 
        The element $[h]^w$ is realized by the trek $t(u, v)$ for $h\leq h(\GG)-w$, implying that the column corresponding to $w$ has at least $h(\GG)-w+1$ elements.
        
        %\item Fix $h\leq h(\GG)$. By considering pairs $u$ and $v$ from the longest path with $h_{\GG}(u)=h$ and $h_\GG(v)\geq h_\GG(u)$ we conclude that $[h]^w \in P_{\GG, c^*}$ for all $0\leq w\leq h(\GG)-h$.

        %Suppose for a contradiction there is an element $[h]^w\in P_{\GG, c^*}$ s.t. $w> 2(h(\GG)-h)$. 
        
        %If it is realized in $\GG$, there is a trek $t(u, v)$ with height $h$ and width $w$. One of the paths of the simple trek must have length at least $\frac{w}{2}$, and thus, there is a path in a graph of length at least $h+\frac{w}{2} > h(\GG)$, which gives a contradiction.

        %In case $[h]^w$ is not realized in the graph, it is a meet element, i.e. $[h]^w = [h_1]^{w_1}\wedge[h_2]^{w_2}$ and $w$ must be at least 2. Then, by Lemma \ref{lem: directed tree meet polynomial}, $\max(w_1, w_2) = w$. Thus, there exists $[h']^w\in P_{\GG, c^*}$ with $h'> h$. By repeating this process, find $[h']^w\in P_{\GG, c^*}$ with $h'> h$ that is realized in $\GG$. By the argument above, we can find a path in a graph of length at least $h'+\frac{w}{2}> h(\GG)$ giving a contradiction. 

        \item 
        By Lemma \ref{lem: simplicial structure}, the row at height $h$ contains at least $w_{\GG}(h) + 1$ elements, corresponding to the contiguous widths $0, 1, \dots, w_{\GG}(h)$ realized  by vectors $t(i,j)$. %(If $(\GG, c^\ast)$ is not a $\pi$-graph, non realizable elements may introduce additional widths).
        To find the minimum row size, consider a subpath of the longest path in $\GG$ starting at height $h$ and extending further from the root. Its length is at least $h(\GG)-h$, meaning $w_{\GG}(h) \geq h(\GG)-h$. Thus, the row has at least $h(\GG)-h+1$ elements. 
        For the maximum, the longest possible simple trek with its top at height $h$ consists of two directed paths, each of length at most $h(\GG)-h$. Thus, $w_{\GG}(h) \leq 2(h(\GG)-h)$. Since a meet operation $a \wedge b$ cannot create a width larger than $\max(w_a, w_b)$, no width in the entire poset at height $h$ can exceed $2(h(\GG)-h)$. This strictly bounds the row size to at most $2(h(\GG)-h)+1$ elements.

        \item 
        If $[h]^w \in P_{\GG, c^\ast}$ is realized by a trek, then $w \le w_{\GG}(h)$, and Lemma~\ref{lem: simplicial structure} guarantees that $[h]^{w'}$ exists for all $w' < w$. If $[h]^w$ is a meet element with $w \ge 2$, %since the meet operation does not increase the width,
        there exists some realizable element $[h']^w$ with $h' > h$. From the previous statement, at height $h'$ all elements $[h']^{w'} \in P_{\GG, c^\ast}$ for all $w' \le w$. By the properties of the meet operation (Lemma~\ref{lem: directed tree meet polynomial}), we have the identity $[k]^x \wedge [k]^{x-2} = [k-1]^x$ for any width $x \ge 2$. Applying this identity to height $h'$, we have that $[h'-1]^{w'} \in P_{\GG, c^\ast}$ for all $2 \le w' \le w$. Since all elements in columns $0$ and $1$ exist by property \ref{item: columns}, every element at height $h'-1$ with width up to $w$ exists in the poset. Proceeding downwards inductively, we conclude both that if $[h]^w \in P_{\GG, c^\ast}$, then $[h]^{w'} \in P_{\GG, c^\ast}$ for all $w' < w$, and that the number of elements in a row does not decrease as height decreases.

        %\item Assume first $[h]^w\in P_{\GG, c^*}$ for some $w\geq 1$ is realized in $\GG$, i.e. $[h]^w = t(u, v)$ for some $u, v\in V$. Then, without loss of generality, assume that $pa(u)$ belongs to the simple trek between $u$ and $v$. Then, $t(pa(u), v) = [h]^{w-1}$. Continue this process to find vectors $[h]^{w'}$ for all $w'<w$. 

        %In case $[h]^w$ is not realized in the graph, it is a meet element, i.e. $[h]^w = [h_1]^{w_1}\wedge[h_2]^{w_2}$ and $w$ must be at least 2. Then, by Lemma \ref{lem: directed tree meet polynomial}, $\max(w_1, w_2) = w$. Thus, there exists $[h']^w\in P_{\GG, c^*}$ with $h'> h$. By repeating this process, find $[h']^w\in P_{\GG, c^*}$ with $h'> h$ that is realized in $\GG$. It follows that $[h']^{w'} \in P_{\GG, c^*}$ for all $w'\leq w$. 
        %Since $[h']^{w'}\wedge[h']^{w'-2} = [h'-1]^{w'}$, the elements $[h'-1]^{w'}$ for $w'\leq w$ belong to the poset. By repeating this argument possibly several times (and considering lower rows), we conclude that $[h]^{w'} \in P_{\GG, c^*}$ for all $w'\leq w$. 

    \item The degree of the polynomial is strictly $\deg(p_{w, h}) = \deg(\lambda^w(1+\lambda^2+ \dots + \lambda^{2h})) = 2h+w$. By the construction of the poset, widths $w$ and $w'$ (which share the same parity) lie on the same diagonal if and only if $w - w' = 2(h' - h)$. This algebraically rearranges to $2h+w = 2h'+w'$, meaning they lie on the same diagonal if and only if $\deg(p_{w, h}) = \deg(p_{w', h'})$.
    
    %\item The widths $w$ and $w'$ have the same parity. Without loss of generality, $w>w'$. They lie on the same diagonal if and only if $w-w' = 2(h'-h)$ by the construction of the poset. Then, 
    %\[
    %\deg(p_{w, h}) = %\deg(\lambda^w(1+\lambda^2+ %\ldots + \lambda^{2h})) = 2h+w = %2h'+w'=\deg(p_{w', h'}). 
    %\]

    \item Consider $[h]^w\in P_{\GG, c^*}$. %If $h> 1$, there is an element $[h]^{w-3}\in P_{\GG, c^*}$.
    If $w \ge 2$, there is an element $[h]^{w-2}\in P_{\GG, c^*}$ by property \ref{item: shape}.
    Note that $[h-1]^w = [h]^w\wedge [h]^{w-2} \in P_{\GG, c^*}$. (If $w < 2$, $[h-1]^w \in P_{\GG, c^*}$ is guaranteed by property \ref{item: columns}). 
    The vectors $[h]^w$ and $[h-1]^w$ are different in one entry, where $([h]^w)_{2h+w} = 1$ and $([h-1]^w)_{2h+w}=0$. Thus, $[h-1]^w\prec [h]^w$ and there are no vectors between them, i.e. $[h-1]^w$ is in the lower cover. If there is another element in the lower cover $[h']^{w'}$, it must hold that $([h']^{w'})_{2h+w}=1$, otherwise $[h']^{w'}\preceq [h-1]^w$. In other words, it imply that $\deg(p_{w', h'}) = \deg(p_{w, h}) = 2h+w$. It happens if and only if $[h']^{w'}$ lie on the same diagonal. Since elements on the same diagonal form a chain, there can be at most one such element in the immediate lower cover.
    \end{enumerate}
\end{proof}

The following example shows how Lemmas~\ref{lem: realizing height of G} and~\ref{lem: simplicial structure} and Proposition~\ref{prop: properties of the poset}  construct $P_{\varphi_{\GG,c^\ast}}$. 

\begin{example}
    \label{ex: constructing the poset}
    Consider the graph $\GG$ on the left in Figure~\ref{fig:notpi}. 
    Since the longest directed path in $\GG$ is length $3$, we have $h(\GG) = 3$. 
    %For $0\leq h\leq h(\GG)$ the length of a maximum length simple trek with top at height $h$ is found from $\GG$, giving $w_{\GG}(0) = 3, w_{\GG}(1) = 3, w_{\GG}(2) = 1, w_{\GG}(3) = 0$.
    To construct the poset $P_{\varphi_{\GG,c^\ast}}$, we first find the maximum length of a simple trek with its top at each height $h$. Examining $\GG$ gives the widths  $w_{\GG}(0)=3,\ w_{\GG}(1)=4,\ w_{\GG}(2)=1,\ w_{\GG}(3)=0$. Notice that $w_{\GG}(1) = 4$ comes from the simple trek between nodes 5 and 6 (which has top node 2 at height 1), while $w_{\GG}(0) = 3$ because the longest simple trek from node 1 only has length 3.
    By Lemma~\ref{lem: simplicial structure}, the poset must include all elements $[h]^w$ for $0\leq h\leq h(\GG)$ and $0\leq w\leq w_{\GG}(h)$, filling in elements along rows. In particular, it adds the element $[1]^4$.
    %Proposition~\ref{prop: properties of the poset}~\ref{item: shape} tells us to add one more element to $P_{\varphi_{\GG,c^\ast}}$; namely, $[0]^4$ since it sits beneath an element $[1]^4$ that has already been added. This step fills in elements beneath elements in the poset going down the columns. 
    Next, Proposition~\ref{prop: properties of the poset}\ref{item: shape} tells us that we must close the poset going down the columns. Since $[1]^4$ is in the poset, we are forced to add the element $[0]^4$ beneath it. Notably, $[0]^4$ does not correspond to any simple trek in $\GG$ (since $w_{\GG}(0) = 3$), but it is required to exist as the meet of other elements. 
    % Proposition~\ref{prop: properties of the poset}~\ref{item: degree} tells us how to organize these $[h]^w$ along diagonals, and 
    Finally, Proposition~\ref{prop: properties of the poset}~\ref{item: lower cover} tells us the covering relations between these elements.  
    To complete the poset,we place $0$ at the bottom, covered by all $[0]^w$. 
    This fully recovers the poset on the left in Figure~\ref{fig: example posets pi/not pi graphs}. 
\end{example}

The general construction of the poset $P_{\GG,c^\ast}$ is summarized in the following theorem.

\begin{theorem}
    \label{thm: poset characterization}
    Let $\GG$ be a directed tree with $h(\GG)>0$ the length of the longest directed path in $\GG$, and let 
    \[
    \hat{\mathcal{E}}_{\GG} = \{[h]^w : 0\leq h\leq h(\GG), 0 \leq w \leq w_{\GG}(h)\}.
    \]
    The poset $P_{\varphi_{\GG,c^\ast}}$ has ground set
    \[
    \mathcal{E}_{\GG} = \{0\}\cup\hat{\mathcal{E}}_{\GG}\cup\{[i]^w : 0\leq i \leq h, [h]^w\in \hat{\mathcal{E}}_{\GG}\}
    \]
    and covering relations 
    \[
    \begin{split}
    &\{0\prec [0]^w : [0]^w\in P_{\varphi_{\GG,c^\ast}}\},\\
    &\{[h-1]^w\prec [h]^w: [h]^w\in P_{\varphi_{\GG,c^\ast}}\},\textrm{ and} \\
    &\{[h-i]^{w+2i}\prec[h]^w : [h-i]^{w+2i},[h]^w\in P_{\varphi_{\GG,c^\ast}} \textrm{ with $i>0$ minimal}\}.
    \end{split}
    \]
\end{theorem}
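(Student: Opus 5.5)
I would prove the two halves of the statement separately: first that the ground set of $P_{\varphi_{\GG,c^\ast}}$ equals $\mathcal{E}_{\GG}$, and then that the covering relations are exactly the three listed families. Throughout, every nonzero element is identified with its pair $[h]^w$ via Proposition~\ref{prop: directed tree closure}. Its coefficient vector is then the indicator of the exponent set $\{w, w+2, \ldots, w+2h\}$.

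\textbf{Ground set.} For the inclusion $\mathcal{E}_{\GG}\subseteq P_{\varphi_{\GG,c^\ast}}$, Lemma~\ref{lem: simplicial structure} shows that each $[h]^w\in\hat{\mathcal{E}}_{\GG}$ equals some $t(i,j)$. Then $0 = t(i,j)\wedge t(k,\ell)$ for any two treks of different width parity, which exist since $h(\GG)>0$ gives an edge and a vertex. The elements $[i]^w$ with $i\le h$ are obtained by induction downward: for $w\ge 2$ use $[h]^w\wedge[h]^{w-2}=[h-1]^w$ (Lemma~\ref{lem: directed tree meet polynomial}), where $[h]^{w-2}\in\hat{\mathcal{E}}_{\GG}$ because $w-2\le w_{\GG}(h)$. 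For $w\in\{0,1\}$ use property~\ref{item: columns} of Proposition~\ref{prop: properties of the poset}.

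For the reverse inclusion, I would show that $\mathcal{E}_{\GG}$ contains every $t(i,j)$ and is closed under meets. The first claim is the definition of $w_{\GG}(h)$. For closure, take $[h_1]^{w_1},[h_2]^{w_2}\in\mathcal{E}_{\GG}$ with nonzero meet, and assume $w_1\ge w_2$. By Lemma~\ref{lem: directed tree meet polynomial} the meet is $[h]^{w_1}$ with $2h+w_1=\min(2h_1+w_1,\,2h_2+w_2)$, so $h\le h_1$. Since $[h_1]^{w_1}\in\mathcal{E}_{\GG}$, there is some $[h']^{w_1}\in\hat{\mathcal{E}}_{\GG}$ with $h'\ge h_1\ge h$, and hence $[h]^{w_1}\in\mathcal{E}_{\GG}$. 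This step is routine.

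\textbf{Covering relations.} Order is coordinatewise inclusion of exponent sets. We have $[h']^{w'}\preceq[h]^w$ if and only if $w'\ge w$, $w'\equiv w \pmod 2$, and $w'+2h'\le w+2h$. Proposition~\ref{prop: properties of the poset}\ref{item: lower cover} already says the lower cover of $[h]^w$ is contained in
\[
\{[h-1]^w\}\cup\{\text{the highest element of the diagonal } 2h+w \text{ strictly below } [h]^w\}.
\]
The diagonal elements below $[h]^w$ are the $[h-i]^{w+2i}$ with $i>0$, and these form a chain decreasing in $i$, so the highest one present is the one with minimal $i$. It remains to check that each listed relation really is a cover.

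\begin{enumerate}
\item For $[h-1]^w\prec[h]^w$, the two vectors differ in the single coordinate $2h+w$, so nothing lies strictly between them.
\item For $[h-i]^{w+2i}$ with $i$ minimal, suppose some $x=[h'']^{w''}$ lies strictly between it and $[h]^w$. Then $x$ has top degree at most $2h+w$ and at least $2h+w$, so $x$ lies on the same diagonal. This contradicts the minimality of $i$.
\item The elements $[0]^w$ cover $0$ since they are the atoms: every nonzero element $[h]^w$ dominates $[0]^w$, which lies in $\mathcal{E}_{\GG}$ by downward closure.
\end{enumerate}

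\textbf{Main obstacle.} The subtle point is the downward closure in the ground-set description: showing that every element below a realized $[h]^w$ in its column is in the poset, and nothing else. One needs the identity $[h]^w\wedge[h]^{w-2}=[h-1]^w$ together with the fact that row $h$ of $\hat{\mathcal{E}}_{\GG}$ is an initial segment of widths. I would handle this by the induction described above, going down rows. A further check is that the case $[h-1]^w$ with $h=0$ is excluded, so that the elements $[0]^w$ are instead covered by $0$.
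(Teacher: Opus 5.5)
Your proposal is correct and follows essentially the same route as the paper. It uses Lemma~\ref{lem: simplicial structure} together with the identity $[h]^w\wedge[h]^{w-2}=[h-1]^w$ for downward column closure, closure of $\mathcal{E}_{\GG}$ under meets for the reverse inclusion, and Proposition~\ref{prop: properties of the poset}\ref{item: lower cover} for the covers. You are in fact slightly more careful than the paper: you check meet-closure on all of $\mathcal{E}_{\GG}$ rather than only on trek vectors, and you verify that each listed pair really is a cover. One wording point: when you descend more than one row, $[h-k]^{w-2}\in P_{\varphi_{\GG,c^\ast}}$ for $k\geq 1$ comes from your row-by-row induction, not from membership in $\hat{\mathcal{E}}_{\GG}$.
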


\begin{proof}
    By definition of $P_{\varphi_{\GG,c^\ast}}$, we know $[h]^{w_{\GG}(h)}\in P_{\varphi_{\GG,c^\ast}}$, since these elements are coefficient vectors of trek polynomials $p_{i,j}^{(\GG,c^\ast)}$. 
    By Lemma~\ref{lem: simplicial structure}, $P_{\varphi_{\GG,c^\ast}}$ also includes all elements in $\hat{\mathcal{E}}_{\GG}$. 
    By Proposition~\ref{prop: directed tree closure}, every trek polynomial $p_{i,j}^{(\GG,c^\ast)}\in\mathcal{D}$, and hence $p_{i,j}^{(\GG,c^\ast)} = p_{w,h}$. 
    By Lemma~\ref{lem: realizing height of G} $h\leq h(\GG)$, and by definition $w\leq w_{\GG}(h)$.  
    Hence, $[h]^w\in \hat{\mathcal{E}}_{\GG}$ for all trek polynomials.
    
    To see that the elements $\{[i]^w : 0\leq i \leq h, [h]^w\in \hat{\mathcal{E}}_{\GG}\}$ belong to the ground set of $P_{\GG,c^\ast}$, it suffices to show that $[h-1]^w\in P_{\varphi_{\GG,c^\ast}}$ whenever $[h]^w\in P_{\varphi_{\GG,c^\ast}}$.  
    For $w = 0$ or $1$, the result is immediate. 
    For $w>1$, choose $[h]^w\in P_{\varphi_{\GG,c^\ast}}$. 
    By Proposition~\ref{prop: properties of the poset}\ref{item: shape}, we know that $[h]^{w-2}\in P_{\varphi_{\GG,c^\ast}}$. 
    So by Lemma~\ref{lem: directed tree meet polynomial}, we have that $[h]^w\wedge[h]^{w-2} = [h-1]^{w}\in P_{\varphi_{\GG,c^\ast}}$. 
    
    Since $h(\GG)>0$, we have that $0$ is also in the ground set of $P_{\GG,c^\ast}$.  
    This follows from Proposition~\ref{prop: properties of the poset}\ref{item: columns}, since $[0]^0,[0]^1\in P_{\GG,c^\ast}$ and $[0]^0\wedge [0]^1 = 0$. 
    
    Hence, we have shown that the ground set of $P_{\varphi_{\GG,c^\ast}}$ contains $\mathcal{E}_{\GG}$, and that $\mathcal{E}_{\GG}$ contains all coefficient vectors of the trek polynomials $p_{i,j}^{\GG,c^\ast}$. 
    Since $P_{\varphi_{\GG,c^\ast}}$ is generated by arbitrary finite meets of these coefficient vectors, it remains only to show that $\mathcal{E}_{\GG}$ is closed under the meet operation.
    %It remains to show that the meet of any two coefficient vectors belongs to $\mathcal{E}_{\GG}$.  
    To see this take $[h]^w, [h']^{w'}\in P_{\varphi_{\GG,c^\ast}}$ corresponding to a pair of trek polynomials. 
    If $w$ and $w'$ differ in parity then $[h]^w\wedge [h']^{w'} = 0 \in P_{\varphi_{\GG,c^\ast}}$. 
    Otherwise, by Lemma~\ref{lem: directed tree meet polynomial}, we have $[h'']^{w''} = [h]^w\wedge [h']^{w'}$ where $w'' = \max(w, w')$. 
    Without loss of generality, take $w'' = w$. 
    It then follows from Lemma~\ref{lem: directed tree meet polynomial} that $[h'']^{w}$ lies beneath $[h]^w$, and hence belongs to the set $\{[i]^w : 0\leq i \leq h, [h]^w\in \hat{\mathcal{E}}_{\GG}\}$. 
    Thus, the ground set of $P_{\varphi_{\GG,c^\ast}}$ equals $\mathcal{E}_{\GG}$.

    The upper cover of $0$ is clear, and the remaining covering relations are given by Proposition~\ref{prop: properties of the poset}\ref{item: lower cover}, which completes the proof. 
\end{proof}

\begin{remark}
To summarize, Theorem~\ref{thm: poset characterization} says that the poset $P_{\varphi_{\GG,c^\ast}}$ is constructed directly from $\GG$ by identifying the maximal length of a simple trek $w_{\GG}(h)$ at each height $h$ less than or equal to $h(\GG)$, which is the length of the longest directed path in $\GG$.  
For every $h$, we then add in all $[h]^w$ with $w\leq w_{\GG}(h)$ (closure along rows).  
Then, for every $w$, we add in $[i]^w$ for all $i\leq h(w)$ where $h(w) = \max(h : [h]^w \textrm{ already added})$ (closure down columns). 
This is precisely the procedure executed in Example~\ref{ex: constructing the poset}.
\end{remark}

\subsection{The Möbius function of \texorpdfstring{$P_{\varphi_{\GG,c^\ast}}$}{the poset}}
\label{subsec: möbius function}
For a poset $P$ with partial order $\preceq$, the \emph{Möbius function} $\mu(s,u)$ of $P$ is defined recursively as
\begin{equation}
    \label{eqn: mobius_definition}
    \begin{split}
        \mu(s,s) &= 1, \quad \textrm{for all } s\in P\\
        \mu(s,u) & = - \sum_{s\preceq t\prec u}\mu(s,t) \quad \textrm{for all } s \prec u \textrm{ in } P. 
    \end{split}
\end{equation}
The characterization of $P_{\varphi_{\GG,c^\ast}}$ in Theorem~\ref{thm: poset characterization} allows us to compute a closed-form formula for its Möbius function.
In the following, we let $s' \precdot \,\,s$ denote that $s$ covers $s'$ in $P_{\varphi_{\GG,c^\ast}}$.

\begin{lemma}
    \label{lem: complete möbius function}
    Let $\GG$ be a directed tree, and let $s,s'\in P_{\varphi_{\GG,c^\ast}}$ with $s'\preceq s$.
    If $s$ has more than one element in its lower cover then %\marina{[Exactly 2, no? Maybe:] If $s$ has exactly two elements $s_1$ and $s_2$ in its lower cover then}
    \[
    \mu(s',s) = 
    \begin{cases}
        -1 & \textrm{ if $s'\precdot \,\, s$}\\
        1  & \textrm{ if $s' = s$ or $s' = \wedge_{s''\prec \cdot \,s}s''$ }\\
        0  & \textrm{ otherwise}
    \end{cases}
    \]
    If $s$ has exactly one element in its lower cover then 
    \[
    \mu(s',s) = \begin{cases}
        -1 & \textrm{ if $s'\precdot \,\, s$}\\
        1  & \textrm{ if $s' = s$}\\
        0  & \textrm{ otherwise}
    \end{cases}
    \]
\end{lemma}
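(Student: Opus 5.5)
The plan is to compute $\mu(s',s)$ via the structure of the lower interval $[\,\cdot\,,s]$ given by Theorem~\ref{thm: poset characterization} and Proposition~\ref{prop: properties of the poset}\ref{item: lower cover}. The tool is the standard crosscut (or ``dual Weisner'') theorem: for a finite meet-semilattice, $\mu(s',s)=\sum_{k}(-1)^k N_k$, where $N_k$ is the number of $k$-subsets of the lower cover of $s$ whose meet equals $s'$ (for $s'\prec s$). Since $P_{\varphi_{\GG,c^\ast}}$ is a meet-semilattice and the interval $[s',s]$ is a lattice (a finite meet-semilattice with a top element), the crosscut theorem applied to the coatoms of $[s',s]$ is available. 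Equivalently, $\mu(s',s)$ is the sum of $(-1)^{|C|}$ over subsets $C$ of the lower cover of $s$ lying above $s'$ with $\bigwedge C = s'$.

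By Proposition~\ref{prop: properties of the poset}\ref{item: lower cover}, $s=[h]^w$ has either one lower cover element or exactly two, namely $[h-1]^w$ and $[h-i]^{w+2i}$. If there is a single cover $c$, then the only subsets are $\emptyset$ (meet $s$) and $\{c\}$ (meet $c$). This gives $\mu(s,s)=1$, $\mu(c,s)=-1$, and $0$ for every other $s'\prec s$: each such $s'$ lies below $c$, but no subset has meet $s'$. Alternatively, one can argue directly from the recursion~\eqref{eqn: mobius_definition}: for $s'\preceq c\prec s$ we have $\mu(s',s)=-\sum_{s'\preceq t\preceq c}\mu(s',t)=0$ whenever $s'\neq c$.

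If there are two covers $c_1,c_2$, the subsets are $\emptyset$, $\{c_1\}$, $\{c_2\}$ and $\{c_1,c_2\}$, with meets $s$, $c_1$, $c_2$ and $c_1\wedge c_2$ respectively. This gives $\mu=-1$ on the covers and $\mu=1$ at $c_1\wedge c_2$, which is the same as $\wedge_{s''\precdot s}s''$. For all other $s'$ we get $\mu=0$. Two small points need checking. First, $c_1\wedge c_2$ lies in the poset, which holds because the poset is closed under meets. Second, $c_1\wedge c_2$ differs from $c_1$ and $c_2$, which holds because the two covers are incomparable. If one prefers to avoid the crosscut theorem, the same result follows from the defining recursion by induction on the length of $[s',s]$. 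For $s'$ not below both covers, the interval $[s',s)$ equals $[s',c_j]$ for one $j$, so the sum vanishes. For $s'$ strictly below $c_1\wedge c_2$, we use inclusion–exclusion, $[s',s)=[s',c_1]\cup[s',c_2]$ with intersection $[s',c_1\wedge c_2]$, and each of these closed-interval sums is $0$.

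The main obstacle is justifying the two structural facts that the proof relies on. The first is that every element strictly below $s$ lies below some element of its lower cover; this is immediate in a finite poset. The second is that the intersection $[s',c_1]\cap[s',c_2]$ is exactly $[s',c_1\wedge c_2]$; this follows since $c_1\wedge c_2$ is the coordinatewise minimum and lies in the poset. Nothing tree-specific is needed beyond the bound of at most two covers from Proposition~\ref{prop: properties of the poset}\ref{item: lower cover}.
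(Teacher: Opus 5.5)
Your proposal is correct, but it takes a different and cleaner route than the paper.

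The paper works with the dual recursion $\mu(s',s)=-\sum_{s'\prec t\preceq s}\mu(t,s)$ and proceeds downward from $s$ in three stages:
\begin{enumerate}
\item it shows $\mu(s',s)=0$ for $s'$ below exactly one of the covers $s_1,s_2$, by induction on height difference;
\item it computes $\mu(s_1\wedge s_2,s)=-(1-1-1)=1$ from the surviving terms;
\item it handles $s'\prec s_1\wedge s_2$ by a similar induction.
\end{enumerate}

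You instead apply the crosscut theorem to the lattice $[s',s]$, taking its coatoms as the crosscut. This reduces the whole lemma to listing the at most four subsets of the lower cover of $s$ and their meets. The only facts it needs are that $[s',s]$ is a lattice (a finite meet-semilattice with a top) and that two distinct covers are incomparable, and you verify both.

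Your fallback argument via \eqref{eqn: mobius_definition} is also complete, and it needs no induction at all. When $s'\preceq c_1\wedge c_2$ you have $[s',s)=[s',c_1]\cup[s',c_2]$ with intersection $[s',c_1\wedge c_2]$. By definition, $\sum_{t\in[s',c]}\mu(s',t)=\delta_{s',c}$. Together these give $\mu(s',s)=\delta_{s',c_1\wedge c_2}$ directly. This formula also covers the case $s'=c_1\wedge c_2$ itself; in the recursion version you only state the strictly-below case, but the crosscut computation already handles equality.

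What your approach buys is transparency. It shows that the lemma is a general fact about finite meet-semilattices in which an element has at most two lower covers. The only input specific to directed trees is Proposition~\ref{prop: properties of the poset}\ref{item: lower cover}. It also avoids the paper's induction on height difference. That induction is somewhat awkward: the paper's base case assumes a lower cover of $s_1=[h-1]^w$ lies exactly one row down, but the cover on its diagonal may lie several rows lower.

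The paper's version has the modest advantage of being self-contained, since it does not cite the crosscut theorem.
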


\begin{proof}
    Let $s=[h]^w \in P_{\GG, c^*}$. 
    Suppose $s$ has exactly two elements in its lower cover. By Proposition \ref{prop: properties of the poset}\ref{item: lower cover}, these must be of the form $s_1 = [h-1]^w$ and $s_2 = [h-i]^{w+2i}$.
    %For the first case, let $s_1 = [h-1]^w$ and $s_2 = [h-i]^{w+2i}$ be the elements from the lower cover of $s$, as by Proposition \ref{prop: properties of the poset}\ref{item: lower cover}, there are at most 2 elements of them. 
    Firstly, if $s'=s$, then $\mu(s, s)=1$ by the definition of the Möbius formula \eqref{eqn: mobius_definition}. 
     Secondly, if $s'=s_1$, 
    then
     \[
     \mu(s_1, s) = -\sum_{s_1\preceq t\prec s}\mu(s_1, t) = -\mu(s_1, s_1) = -1,
     \]
     and, analogously, $\mu(s_2, s)=-1$.

By one of the properties of the Möbius function \cite{jelinek2020growth}, it can be computed by the following formula for $s'\preceq s$:
\begin{equation}
\label{eq: mobius property}
\mu(s', s) = -\sum_{s'\prec t \preceq s} \mu(t, s).    
\end{equation}

Let $s' \prec s_1$, but such that $s' \nprec s_2$. For all elements forming the interval $[s', s_1]$ the same relation holds. Assume $s'$ corresponds to $[h']^{w'}$. We will show that $\mu(s', s) =0$ by induction on the difference of heights $d = (h-1) - h'$. The base case corresponds to $d=1$, which happens exactly when $s'$ is in the lower cover of $s_1$. Note that the only elements in the interval $[s', s]$ are $s', s_1$ and $s$. Thus,
\[
\mu(s', s) = -\sum_{s'\prec t \preceq s} \mu(t, s) = 1 -1 = 0.
\]
By~\eqref{eq: mobius property}, it holds that if $\mu(t, s)=0$ for $t\prec s_1$, then $\mu(s', s) =0$ for $s'$ in the lower cover of $s'$ unless $s'\prec s_2$. Thus, for all $s'\prec s_1$ but $s'\nprec s_2$ we have that $\mu(s', s)=0$. 
By analogous reasoning we can show that $\mu(s', s)=0$ for $s'\prec s_2$, but $s'\nprec s_2$. 

 Then, let $s' = s_1\wedge s_2$. By the discussion above, the only non-zero elements in the sum~\eqref{eq: mobius property} are $\mu(s, s)$, $\mu(s_1, s)$ and $\mu(s_2, s)$, i.e.
 \[
 \mu(s_1\wedge s_2, s) = -(1-1-1)=1. 
 \]
 Finally, let $s'\prec s$, but not covered by the cases above. It implies that $s'\prec s_1\wedge s_2$. Then, by the similar induction as above $\mu(s', s)=0$.

For the second case, if $s$ has exactly one element $s_1$ in its lower cover, the same logic applies. We have $\mu(s,s) = 1$ and $\mu(s_1, s) = -1$. For any $s' \prec s_1$, \eqref{eq: mobius property} yields $\mu(s', s) = -(\mu(s_1, s) + \mu(s,s)) = 0$.
 %By Lemma \ref{lem: directed tree meet polynomial}, we can find that $s' = [h]^w\wedge [h-i]^{w+2i} = [h-i-1]^{w+2i}$, which corresponds to the element right below $s_2$ in the poset, and lying on the same diagonal as $s_1$.
 %The interval $[s_2, s']$ consists only of 2 elements: $\{s_2, s'\}$. The interval $[s_1, s']$ contains all elements from the same diagonal lying between $s_1$ and $s'$, forming a chain. Thus,
 %\begin{align*}
 %\mu(s', s) = -(\mu(s', s')+\sum_{s_2\preceq t\prec s'}\mu(s', t) + \sum_{s_1\preceq t\prec s'} \mu(s', t))=\\
 %-(1+ \mu(s', s_2) + \sum_{s_1\preceq t\prec s'} \mu(s', t)) = 
 %-\sum_{s_1\preceq t\prec s'} \mu(s', t) = 1,
 %\end{align*}
%where the last sum evaluates to -1, as there is only one non-zero term corresponding to $t$ in the upper cover of $s'$. 
\end{proof}

Finally, regarding the Möbius inversion formula in Theorem~\ref{thm: mobius}, \cite[Lemma 4.19]{garrote2026unirational} establishes that $r(s) = 0$ (hence, $f_s(\theta)=0$) if and only if $s$ has at least two elements in its lower cover. Given the specific structure of our poset, since the size of the lower cover is at most two (Proposition~\ref{prop: properties of the poset}\ref{item: lower cover}), we directly have the following characterization:
\begin{corollary}\label{cor: f0 2 lower cover}
    Let $\GG$ be a directed tree with constant coloring $c^\ast$ and $s\in P_{\varphi_{\GG,c^\ast}}$. The following conditions are equivalent:
    \begin{enumerate}[label=(\arabic*), ref=(\arabic*)]
        \item $f_s(\theta) = 0$,
        \item $r(s) = 0$, and
        \item $s$ has exactly 2 elements in the lower cover. 
    \end{enumerate}
\end{corollary}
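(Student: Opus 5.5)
(1)$\Leftrightarrow$(2) is immediate from the definition $f_s(\theta)=r(s)\cdot\phi_{\mathcal T}(\theta)$. The monomials in $\mathcal T$ are distinct, so $f_s$ is the zero polynomial exactly when its coefficient vector $r(s)$ vanishes. The real content is therefore (2)$\Leftrightarrow$(3).

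For this I would first cite \cite[Lemma 4.19]{garrote2026unirational}: $r(s)=0$ if and only if $s$ has at least two elements in its lower cover. By Proposition~\ref{prop: properties of the poset}\ref{item: lower cover}, every element has at most two elements in its lower cover, so "at least two" means "exactly two", and the equivalence follows.

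As a sanity check that avoids the external lemma, I would also verify it directly from the structure in Theorem~\ref{thm: poset characterization}. Write $s=[h]^w$, whose support is $\{w,w+2,\dots,w+2h\}$. The element $[h-1]^w$, when it exists, covers every exponent except the top one, $d=w+2h$. Hence
\[
\bigvee_{s'\prec s}s'\quad\text{equals } s \text{ or } s-e_{d}.
\]
It equals $s$ exactly when some $s'\prec s$ contains $\lambda^{d}$. Such an $s'$ lies on the $d$-th diagonal, and by Proposition~\ref{prop: properties of the poset}\ref{item: lower cover} this happens exactly when a second lower cover $[h-i]^{w+2i}$ exists. So $r(s)=0$ precisely in the two-cover case.

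The edge cases need a brief check:
\begin{itemize}
\item For $h=0$ the only lower cover is $0$, so $r(s)=s\neq 0$.
\item For $s=0$ there is nothing below it and $r(0)=0$. This element must be handled separately, or treated as excluded from the statement.
\end{itemize}
The case $s=0$ is the only real subtlety.
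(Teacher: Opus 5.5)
Your argument is correct and is essentially the paper's. The paper gets (2)$\Leftrightarrow$(3) by combining \cite[Lemma 4.19]{garrote2026unirational} (zero residue iff at least two lower covers) with Proposition~\ref{prop: properties of the poset}\ref{item: lower cover} (at most two lower covers), and (1)$\Leftrightarrow$(2) is immediate from $f_s=r(s)\cdot\phi_{\mathcal T}$. Your direct check via the top exponent $d=w+2h$ is a valid self-contained alternative, and your caveat is right: the bottom element $0$ has $r(0)=0$ (by the empty-join convention, or by Möbius inversion) but no lower cover, so the statement must be read for $s\neq 0$, which the paper leaves implicit.
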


% The results in this section tell us how to construct the poset $P_{\varphi_{\GG,c^\ast}}$ using the elements $[h]^w$, how to find the lower cover of each element, and a closed-form formula for the Möbius function.  
% Using this combinatorial data, the following sections give complete answers to \textcolor{red}{Problems~\ref{prob: colored distinguishability},~\ref{prob: colored implicitization},~\ref{prob: toric} and~\ref{prob: linear equivalence}} for the family of colored DAG models $\{\mathcal M(\GG,c^\ast) : \GG \textrm{ a directed tree}\}$. 
% In other words, knowledge of the poset $P_{\varphi_{\GG,c^\ast}}$ allows us completely answer all questions for these semialgebraic sets that were posed in the introduction. 

\subsection{$\pi$-graphs and $P$-partitions}
\label{subsec: pi-graphs and P-partitions}
This subsection characterizes the directed trees $\GG$ for which $P_{\varphi_{\GG,c^\ast}}$ is a $\pi$-system (see Section~\ref{sec: preliminaries} for a definition of $\pi$-system and its relevance). 
In the case of colored DAG varieties, the condition that $P_{\varphi_{\GG,c}}$ is a $\pi$-system is fundamentally determined by the combinatorics of the colored DAG $(\GG,c)$. 
Hence, we make the following definition.
\begin{definition}
    \label{def: pi-graph}
    A colored DAG $(\GG,c)$ is a \emph{$\pi$-graph} if $P_{\varphi_{\GG,c}}$ is a $\pi$-system.
\end{definition}
% When a colored DAG $(\GG,c)$ is a $\pi$-graph, the computation of polynomials in the vanishing ideal of $I_{\GG,c}$ via the poset $P_{\GG,c}$ simplifies since the vanishing ideal is equal to the ideal $I_{P_{\GG,c}}$ of the poset $P_{\GG,c}$. 
% For instance, for $\pi$-graphs, the linear forms satisfied by the colored DAG model $\mathcal M(\GG,c)$, as well as the binomial generators of a toric reparameterization of the vanishing ideal $I_{\GG,c}$ can be read directly from the poset $P_{\GG,c}$. 

% Hence, we end by giving a characterization of directed trees $\GG$ for which $(\GG,c^\ast)$ is a $\pi$-graph. 
Interestingly, the characterization of directed tree that are $\pi$-graphs presented in this section suggests a connection between the $\pi$-graph condition and Richard Stanley's theory of $P$-partitions of posets. 
For a naturally labeled poset $P = ([N], \prec)$,   
a \emph{$P$-partition} is a map $\sigma: [N] \to [k]$ for some positive integer $k$ that is \emph{order-reversing}; i.e., $\sigma(i) \geq \sigma(j)$ whenever $i\preceq_P j$. 
The following is stated in slightly more general language than necessary to highlight the connection between the $\pi$-graph condition and $P$-partitions, which are fundamental in enumerative combinatorics. 

Let $\GG = ([m], E)$ be a directed tree. 
% By Definition~\ref{def: width and height of directed tree poset element}, each $i\in[m]$ has a height $h_{\GG}(i)$ such that $0\leq h_{\GG}(i) \leq h(\GG)$, where $h(\GG)$ is the length of the longest directed path in $\GG$ (see Lemma \ref{lem: realizing height of G}). 
% Moreover, every element $[h]^w$ of the poset $P_{\varphi_{\GG,c^\ast}}$ has height $h$ satisfying $0\leq h \leq h(\GG)$. 
% Hence, describing the elements in the poset $P_{\varphi_{\GG,c^\ast}}$ amounts to deciding the possible widths $w$ for each $h$ such that $[h]^w\in P_{\varphi_{\GG,c^\ast}}$. 
% 
Note that the elements of $P_{\varphi_{\GG,c^\ast}}$ can be described by determining, for each $0\leq h \leq h(\GG)$, the $w\geq 0$ for which $[h]^w\in P_{\GG,c^\ast}$.   
In particular, $P_{\varphi_{\GG,c^\ast}}$ can be viewed as constructed by applying a certain operation to the \emph{height poset} of $(\GG,c^\ast)$:
\begin{definition}
    \label{def: height poset}
    The \emph{height poset} of the directed tree $\GG$ is the poset $\mathcal C_{\GG}$ with ground set $\{0, 1, \ldots, h(\GG)\}$ with the partial order $h \preceq h'$ if and only if $h\leq h'$. 
\end{definition} 

In the following proof, we say that the \emph{width} of a vertex $i$ in $\GG$, denoted $w_{\GG}(i)$, is the length of the longest simple trek with top node $i$. 
In~\eqref{eqn: width of a vertex}, we defined the width $w_{\GG}(h)$ of a height $h\in \mathcal C_{\GG}$.  
Note that
\[
w_{\GG}(h) = \max\{w_{\GG}(i) : i\in [m] \textrm{ and } h_{\GG}(i) = h\}.
\]

% Each element $h\in \mathcal C_\GG$ naturally has a notion of width that captures the maximum $w$ such that $[h]^w\in P_{\varphi_{\GG,c^\ast}}$. 

% \begin{definition}
%     \label{def: width of heights}
%     Let $\GG = ([m],E)$ be a directed tree. 
%     \begin{itemize}
%         \item The \emph{width} of a vertex $v\in[m]$, denoted $w_\GG(v)$, is the length of the longest simple trek $S_{ij}$, with $\textrm{top}(S_{ij}) = v$.
%         \item The \emph{width} of $h\in \mathcal C_\GG$ is $w_\GG(h) = \max\{w_\GG(v) : v\in [m] \textrm{ and } h_\GG(v) = h\}$.
%     \end{itemize}
% \end{definition}

\begin{theorem}
\label{thm: pi-graphs}
Let $\GG =([m],E)$ be a directed tree. Then $(\GG,c^\ast)$ is a $\pi$-graph if and only if $w_{\GG}(h) \geq w_{\GG}(h')$ whenever $h\preceq h'$ in $\mathcal{C}_{\GG}$. 
\end{theorem}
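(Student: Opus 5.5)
We reduce the $\pi$-system condition to a statement about the ground set $\mathcal{E}_{\GG}$ described in Theorem~\ref{thm: poset characterization}. By definition, $(\GG,c^\ast)$ is a $\pi$-graph exactly when every element of $P_{\varphi_{\GG,c^\ast}}$ is either $0$ or a trek vector $t(i,j)$. By Lemma~\ref{lem: simplicial structure} and the definition of $w_{\GG}(h)$ in~\eqref{eqn: width of a vertex}, the nonzero elements realized by trek vectors are exactly those in $\hat{\mathcal{E}}_{\GG}=\{[h]^w : 0\le h\le h(\GG),\ 0\le w\le w_{\GG}(h)\}$. Hence $(\GG,c^\ast)$ is a $\pi$-graph if and only if the column closure adds nothing, i.e.
\[
\{[i]^w : 0\leq i\leq h,\ [h]^w\in\hat{\mathcal{E}}_{\GG}\}\subseteq \hat{\mathcal{E}}_{\GG}.
\]
The proof then consists of showing that this containment is equivalent to $w_{\GG}$ being non-increasing on the chain $\mathcal{C}_{\GG}$.

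For the ``if'' direction, assume $w_{\GG}(h)\ge w_{\GG}(h')$ whenever $h\le h'$. Take $[i]^w$ with $i\le h$ and $[h]^w\in\hat{\mathcal{E}}_{\GG}$. Then $w\le w_{\GG}(h)\le w_{\GG}(i)$, so $[i]^w\in\hat{\mathcal{E}}_{\GG}$. By Theorem~\ref{thm: poset characterization} the ground set is therefore $\{0\}\cup\hat{\mathcal{E}}_{\GG}$, and every nonzero element equals some $t(i,j)$.

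For the ``only if'' direction, suppose there are $h<h'$ with $w_{\GG}(h)<w_{\GG}(h')$, and set $w=w_{\GG}(h')$. Then $[h']^w\in\hat{\mathcal{E}}_{\GG}$, so $[h]^w$ lies in the ground set by Theorem~\ref{thm: poset characterization}. The remaining step is to check that $[h]^w$ is not a trek vector. If $t(i,j)=[h]^w$, then by Proposition~\ref{prop: ancestral polynomial} the top of $S_{ij}$ has height $h$ (via Lemma~\ref{lem: realizing height of G}) and $S_{ij}$ has length $w$. This gives $w\le w_{\GG}(h)$, a contradiction. Since $[h]^w\neq 0$ and $[h]^w$ is not any $t(i,j)$, the poset is not a $\pi$-system.

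The main point requiring care is the identification of trek vectors with $\hat{\mathcal{E}}_{\GG}$ in both directions. Realizability of every element of $\hat{\mathcal{E}}_{\GG}$ comes from Lemma~\ref{lem: simplicial structure}. Conversely, each $t(i,j)$ has the form $[h]^w$ with $w\le w_{\GG}(h)$, and this uses that $p_{w,h}$ determines the pair $(w,h)$ uniquely, since the lowest exponent is $w$ and the degree is $w+2h$. The degenerate case $h(\GG)=0$, a single vertex, is trivial and is handled separately, because Theorem~\ref{thm: poset characterization} assumes $h(\GG)>0$.
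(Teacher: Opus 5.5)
Your proof is correct, but it follows a different route from the paper's.

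You obtain the statement as a corollary of the ground-set description in Theorem~\ref{thm: poset characterization}. Since the trek vectors are exactly $\hat{\mathcal{E}}_{\GG}$, being a $\pi$-graph means the column closure adds nothing. That is visibly equivalent to $w_{\GG}$ being weakly decreasing on $\mathcal{C}_{\GG}$.

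The paper instead works directly from Lemma~\ref{lem: directed tree meet polynomial}. For ``if'', it does a case analysis on how the exponent ranges of two trek vectors overlap. It checks that $h(p_{a\wedge b})\le h(p_b)$ and $w(p_{a\wedge b})=w(p_b)\le w_{\GG}(h(p_b))\le w_{\GG}(h(p_{a\wedge b}))$, so the meet is realized by Lemma~\ref{lem: simplicial structure}. For ``only if'', it reduces to adjacent heights $l,l+1$ and rules out $w_{\GG}(l+1)=1$. It then exhibits two trek vectors $[l+1]^{w}$ and $[l+1]^{w-2}$ whose meet $[l]^{w}$ is not realized.

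Your version is shorter and needs neither the adjacent-height reduction nor the $w\ge 2$ case check. It also makes Corollary~\ref{cor: simplified poset characterization} immediate. The cost is that the substantive step is delegated entirely to Theorem~\ref{thm: poset characterization}, and through it to Proposition~\ref{prop: properties of the poset}. That step is showing $[i]^w$ lies in the poset for all $i\le h$ whenever $[h]^w$ is a trek vector. The paper's argument is self-contained modulo the two lemmas and produces an explicit witness pair of trek polynomials.

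There is no circularity, since Theorem~\ref{thm: poset characterization} is proved without the $\pi$-graph result. Your separate handling of $h(\GG)=0$ correctly covers that theorem's hypothesis $h(\GG)>0$.
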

\begin{proof}
$\Leftarrow$: Let $u,v,i,j\in [m]$. 
We let $a = t(u,v)$ and $b = t(i,j)$. 
We want to show that $p_{a\wedge b}$ is \emph{realized} by $(\GG,c^\ast)$, in the sense that $a\wedge b = 0$ or $a\wedge b = t(i,j)$ for some $i,j\in [m]$. 
It suffices to show that $0 \leq h(p_{a\wedge b})\leq h(\GG)$ and $0\leq w(p_{a\wedge b})\leq w_{\GG}(h(p_{a\wedge b}))$, since it will then follow from Lemma \ref{lem: simplicial structure} that $p_{a\wedge b} = p_{i,j}^{(\GG,c^\ast)}$ for some $i,j\in[m]$.  
%So, first we need to check the conditions on $w(p_{a\wedge b})$ and $h(p_{a\wedge b})$.

Since $p_{a\wedge b} = 0$ is always realized by $(\GG,c^\ast)$ we assume that $p_{a\wedge b} \neq 0$. 
Without loss of generality, we further assume that $w(p_a) \leq w(p_b)$. 

The non-zero entries of $a$ are in positions $w(p_a), w(p_a)+2, \ldots, w(p_a) + 2h(p_a)$. Analogously, we know the non-zero entries of $b$.
There are then precisely two cases in which $p_{a\wedge b} \neq 0$: $w(p_a)$, $w(p_b)$ have the same parity and
\begin{enumerate}
    \item $w(p_a) \leq w(p_b) \leq w(p_a) + 2h(p_a) \leq w(p_b) + 2h(p_b)$, or 
    \item $w(p_a) \leq w(p_b) \leq w(p_b) + 2h(p_b) \leq w(p_a) + 2h(p_a)$.
\end{enumerate}

Consider case~(1). 
Lemma~\ref{lem: directed tree meet polynomial} implies that
\begin{align}
        w(p_{a\wedge b}) &= w(p_b), \textrm{ and } \label{eqn: width of meet}\\
        w(p_{a\wedge b}) + 2h(p_{a\wedge b}) &= w(p_{a}) + 2h(p_{a}). \label{eqn: degree of meet}
\end{align}
Hence, 
\begin{equation}
\label{eqn: degree inequality}
    w(p_{a\wedge b}) + 2h(p_{a\wedge b}) \leq w(p_{b}) + 2h(p_{b})
\end{equation}
by \eqref{eqn: degree of meet} and the inequalities specifying case~(1). 
Applying \eqref{eqn: width of meet} to \eqref{eqn: degree inequality} we obtain
\[
w(p_{b}) + 2h(p_{a\wedge b}) \leq w(p_{b}) + 2h(p_{b}),
\]
and thus
\begin{equation}
    \label{eqn: height inequality}
    h(p_{a\wedge b}) \leq h(p_b)\leq h(\GG).
\end{equation}
Hence, $0 \leq h(p_{a\wedge b}) \leq h(\GG)$.

Now what is left is to check that $0\leq w(p_{a\wedge b})\leq w_{\GG}(h(p_{a\wedge b}))$. By definition of the width of $\GG$ at height $h(p_b)$,
\[
w(p_b) \leq w_{\GG}(h(p_b)).
\]
By \eqref{eqn: height inequality} and the assumption that $w_{\GG}(h_1) \geq w_{\GG}(h_2)$ whenever $h_1 \leq h_2$, we obtain
\[
w(p_{a\wedge b}) \leq w_{\GG}(h(p_b)) \leq w_{\GG}(h(p_{a\wedge b})). 
\]
Hence $p_{a\wedge b}$ is a polynomial in $ \mathcal{D}$ of height $h(p_{a\wedge b})\leq h(\GG)$ with width between $0$ and  $ w_{\GG}(h(p_{a\wedge b}))$.
It follows from Lemma~\ref{lem: simplicial structure} that $p_{a\wedge b}$ is realized by $(\GG, c^\ast)$. 

Consider case~(2). Then, $w(p_{a\wedge b}) = w(p_b)$, and, by Lemma \ref{lem: directed tree meet polynomial}, $h(p_{a\wedge b}) = h(p_b)$. It follows that $p_{a\wedge b} = p_b$.

$\Rightarrow$: Suppose for a contradiction $w_{\GG}(l)<w_{\GG}(l+1)$ for some $l\in[h(\GG)-1]$. 

    The case $0=w_{\GG}(l)<w_{\GG}(l+1)=1$ is not possible. Indeed, $w_{\GG}(l)=0$ implies that there are no edges leaving any vertex of height $l$, so $l=h(\GG)$.  
    
    Thus, $w_{\GG}(l+1)\geq 2$. By Lemma \ref{lem: simplicial structure} there are vertices $i, j$ with $w(p_{t(i, j)}) = w_{\GG}(l+1)$ and $h(p_{t(i, j)}) = l+1$, and vertices $k, \ell$ such that $w(p_{t(k, \ell)}) = w_{\GG}(l+1)-2$ and $h(p_{t(k, \ell)}) = l+1$.  Denote $t(i, j)=a$ and $t(k, \ell)=b$. Then, by Lemma~\ref{lem: directed tree meet polynomial}
    \begin{enumerate}
        \item $w(p_{a\wedge b}) = w(p_a) = w_{\GG}(l+1)$, and
        \item $h(p_{a\wedge b}) = l$. 
    \end{enumerate}
     But that gives a contradiction since there are no vertices $u, v \in V$, such that $w(p_{t(u, v)}) = w_{\GG}(l+1)$ and $h(p_{t(u, v)}) = l$, if $w_{\GG}(l)<w_{\GG}(l+1)$. 
\end{proof}

The following example illustrates the Theorem~\ref{thm: pi-graphs}.
\begin{example}
For the graph on the right of Figure~\ref{fig:notpi}, the vector of widths 
\[
(w_{\GG}(0),w_{\GG}(1),w_{\GG}(2),w_{\GG}(3)) =(3,3,1,0),
\]
is a $P$-partition on the naturally ordered chain $\mathcal C_{\GG} = (\{0,1,2,3\}, <)$.  
By Theorem~\ref{thm: pi-graphs}, the graph is a $\pi$-graph, which can be seen from the explicit computation of the poset $P_{\varphi_{\GG,c^\ast}}$ on the right in Figure~\ref{fig: example posets pi/not pi graphs}. 

The vector of widths for the graph on the left in Figure~\ref{fig:notpi} is $(3,4,1,0)$, which is not weakly decreasing, and hence not a $P$-partition of $\mathcal C_{\GG}$. 
So the graph is not a $\pi$-graph by Theorem~\ref{thm: pi-graphs}. 
Indeed, the second coordinate of this vector of widths indicates that $[1]^4$ is in the poset $P_{\varphi_{\GG,c^\ast}}$. 
This forces $[0]^4\in P_{\varphi_{\GG,c^\ast}}$ (See Figure~\ref{fig: example posets pi/not pi graphs} (left)), which cannot possibly be realized in the graph since $w_{\GG}(0) = 3$.   
\end{example}

For a directed tree $\GG$, define the \emph{width vector} of $\GG$ as
\[
w_{\GG} = (w_{\GG}(0),\ldots, w_{\GG}(h(\GG))) \in \mathbb{Z}_{\geq 0}^{h(\GG) + 1}. 
\]
Theorem~\ref{thm: pi-graphs} allows us to simplify the characterization of $P_{\varphi_{\GG,c^\ast}}$ in Theorem~\ref{thm: poset characterization}.

\begin{corollary}
    \label{cor: simplified poset characterization}
    Let $\GG$ be directed tree, and let $\hat w_{\GG}\in \mathbb{Z}_{\geq 0}^{h(\GG) + 1}$ be the weakly decreasing vector that satisfies $w_{\GG}(i) \leq \hat w_{\GG}(i)$ for all $i$ and is minimal with respect to this property. 
    % Let $\GG$ be a directed tree, and let $\hat w_{\GG}$ be the weakly decreasing vector minimally above $w_{\GG}$ in the dominance order on $\mathbb{Z}_{\geq 0}^{h(\GG) + 1}$.  
    The ground set of $P_{\varphi_{\GG,c^\ast}}$ is 
    \[
    \{[h]^w : 0\leq h\leq h(\GG), 0 \leq w\leq \hat w_{\GG}(h)\}\cup \{0\}.
    \]
\end{corollary}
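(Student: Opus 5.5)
The plan is to rewrite the ground set $\mathcal{E}_{\GG}$ from Theorem~\ref{thm: poset characterization} as a union of full rows. Concretely, I would show that for each $0\leq h\leq h(\GG)$,
\[
\{w : [h]^w\in \mathcal{E}_{\GG}\} = \{0,1,\ldots,\hat w_{\GG}(h)\},
\]
where $\hat w_{\GG}(h) = \max_{h'\geq h} w_{\GG}(h')$. The first step is to check that this formula describes the minimal weakly decreasing majorant of $w_{\GG}$.

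\emph{Step 1.} The vector $\max_{h'\geq h} w_{\GG}(h')$ is weakly decreasing in $h$ and dominates $w_{\GG}$. Now let $v$ be any weakly decreasing vector with $v\geq w_{\GG}$ coordinatewise. Then for every $h'\geq h$ we have $v(h)\geq v(h')\geq w_{\GG}(h')$. Hence $v(h)\geq \max_{h'\geq h} w_{\GG}(h')$, so the formula gives the coordinatewise minimum.

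\emph{Step 2 (inclusion $\subseteq$).} Take a nonzero element of $\mathcal{E}_{\GG}$. By Theorem~\ref{thm: poset characterization} it has the form $[i]^w$ with $i\leq h'$ and $[h']^w\in\hat{\mathcal E}_{\GG}$. This element lies either in $\hat{\mathcal E}_{\GG}$ itself (the case $h'=i$) or in the column closure. In both cases $w\leq w_{\GG}(h')$ for some $h'\geq i$, so $w\leq \hat w_{\GG}(i)$.

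\emph{Step 3 (inclusion $\supseteq$).} Fix $h$ and $0\leq w\leq \hat w_{\GG}(h)$. Choose $h'\geq h$ attaining $\hat w_{\GG}(h) = w_{\GG}(h')$. Then $w\leq w_{\GG}(h')$, so $[h']^w\in\hat{\mathcal E}_{\GG}$, with realization guaranteed by Lemma~\ref{lem: simplicial structure}. The column-closure part of $\mathcal{E}_{\GG}$ then gives $[h]^w\in\mathcal{E}_{\GG}$. Finally, the element $0$ appears in both descriptions: this is the case $h(\GG)>0$ of Theorem~\ref{thm: poset characterization}. When $h(\GG)=0$ the tree is a single node, the poset is just $\{[0]^0\}$, and the edge case has to be handled separately or excluded.

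\emph{Main obstacle.} No step is deep; the delicate point is bookkeeping. The argument uses the ground-set description of Theorem~\ref{thm: poset characterization} rather than Theorem~\ref{thm: pi-graphs}, even though the latter motivates the statement. One must also confirm that the row closure (all $w'\leq w$) is preserved after the column closure. This is automatic here because the majorant is defined row by row and each row is an initial interval of widths.
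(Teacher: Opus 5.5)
Your proof is correct and takes essentially the intended route. The paper gives no separate proof, and the corollary amounts to rewriting the ground set $\mathcal{E}_{\GG}$ of Theorem~\ref{thm: poset characterization} using the suffix maximum $\hat w_{\GG}(h)=\max_{h'\geq h}w_{\GG}(h')$, exactly as in your Steps 1--3. Your caveat that the element $0$ requires $h(\GG)>0$ is valid and is omitted from the corollary's statement: for the one-vertex tree the ground set is just $\{[0]^0\}$.
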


\begin{remark}[Counting width functions of $\pi$-graphs]
\label{rem: polytope}
Let $\mathbb G_h$ denote the set of all directed trees $\GG$ whose longest directed path from the root node has length $h$. 
It is not hard to show that the set of width vectors $\{w_{\GG}  : \GG \in \mathbb{G}_h\}$ are exactly the lattice points in a convex lattice polytope $\mathcal{P}_h\subset \mathbb{R}^{h+1}$ defined by the inequalities
    \[
    i\leq w_{h-i} \leq 2i, \qquad \textrm{$i = 0,\ldots, h$.}
    \]
Hence, by computing the Ehrhart polynomial of $\mathcal{P}_h\cap \mathcal{O}(\mathcal{C}_{\GG})$, where $\mathcal{O}(\mathcal{C}_{\GG})$ is the \emph{order polytope} \cite{stanley1986two} of $\mathcal{C}_{\GG}$, we would obtain a closed-form formula for the number of width functions of directed trees for which $(\GG,c^\ast)$ is a $\pi$-graph.
\end{remark}

It appears that several of the combinatorial observations in this section generalize to colored DAGs beyond directed trees.  
We leave such considerations for future work. 
Instead, the remaining sections of this paper are devoted to using the combinatorial properties of the poset $P_{\varphi_{\GG,c^\ast}}$ derived in this section to complete Package~\ref{prob: package I} for the family of colored DAG varieties 
\[
\mathcal{F}_{\textrm{$c^\ast$-TREE}} = \{\mathcal{V}_{\varphi_{\GG,c^\ast}} : \GG \textrm{ is a directed tree}\}.
\]

\section{The minimal linear subspace containing a constantly colored directed tree model}
\label{subsec: min lin space of constant directed tree}
As noted in Section~\ref{sec: preliminaries}, the vanishing ideal $I_{\varphi_{\GG, c^*}}$ of the colored DAG variety $\mathcal{V}_{\varphi_{\GG,c^\ast}}$ is an elimination ideal of $I_{\hat\varphi_{\GG, c^*}}$. 
By \cite[Corollary 4.27]{garrote2026unirational}, to find the degree 1 component of the ideal  $I_{\varphi_{\GG, c^*}}$ we aim to find a basis of the linear part of $I_{\hat\varphi_{\GG, c^*}}$ and eliminate variables not corresponding to $\sigma_{ij}$.
Recall that the ideal $I_{\varphi_{\GG,c^\ast}}$ belongs to the polynomial ring $\mathbb{R}[\sigma_{ij}: 1\leq i \leq j \leq m]$, while the ideal $I_{\hat\varphi_{\GG,c^\ast}}$ belongs to the ring $\mathbb{R}[P_{\varphi_{\GG, c^*}}]$, which is produced by adding some additional variables $z_S$ to the ring $\mathbb{R}[\sigma_{ij}: 1\leq i \leq j \leq m]$ (see Section~\ref{sec: preliminaries}).  
These additional variables correspond to elements $[h]^w$ in the poset $P_{\varphi_{\GG,c^\ast}}$  such that $p_{w,h}\neq p_{i,j}^{(\GG,c^\ast)}$ for any nodes $i,j$ in $\GG$. %that are not equal to any coefficient vector $t(i,j)$ of a trek polynomial.  
We call these additional elements \emph{meet elements} in the following. 

Consider the matrix $M_{\hat\varphi_{\GG, c^\ast}}$ introduced in Section~\ref{sec: preliminaries}, where rows are labeled by the variables $z_S$ in the ring $\mathbb{R}[P_{\varphi_{\GG, c^*}}]$, and columns are labeled by elements of $\mathcal{T}$ (trek monomials coming from $(\GG, c^*)$). Each row is a  vector $\wedge_{(i,j)\in S}t(i,j)$ where $t(i,j)$ denotes the coefficient vector of the trek polynomial $p_{i,j}^{\GG,c^\ast}$. 
% coefficient vector of the polynomial $\varphi^*_{P_{\GG, c^*}}(z_S)$. 
Note also that $M_{\varphi_{\GG, c^*}}$ is a submatrix of $M_{\hat\varphi_{\GG, c^\ast}}$. 
% Analogous to the Equation \ref{eqn: map decomp}, we have that 
% \begin{equation*}
%     \varphi_{P_{\GG, c^*}}(\lambda, \omega) = M_{P_{\GG, c}} (m_T: T\in\mathcal{T} )^t.
% \end{equation*}

To find the linear generators of $I_{\varphi_{\GG,c^\ast}}$ in $\mathbb{R}[\sigma_{ij}: 1\leq i \leq j \leq m]$, we first consider the larger ring $\mathbb{R}[P_{\varphi_{\GG, c^*}}]$, where each additional variable corresponds to a new row of $M_{\hat\varphi_{\GG, c^\ast}}$.
In the following theorem we show that the resulting matrix has the same column span, so finding the kernel of $M^t_{\hat\varphi_{\GG, c^\ast}}$ will allow us to describe the kernel of $M^t_{\varphi_{\GG, c^*}}$. 

\begin{theorem} Let $(\GG, c^*)$ be a directed tree with a constant coloring. Then,
\label{thm: basis general directed tree}
\begin{enumerate}
    \item the minimal linear subspace containing $\mathcal{V}_{\hat\varphi_{\GG,c^\ast}}$ is the zero locus of the following: % polynomials: %every linear polynomial in $I_{P_{\GG, c^*}}$ is a linear combination of the following:
    \begin{enumerate}
        \item $z_S - z_{S'}$, when $\wedge_{(i, j)\in S}t(i, j) = \wedge_{(i, j)\in S'}t(i, j)$,
        \item $z_S - z_{S_1} - z_{S_2} + z_{S_3}$, where $s=\wedge_{(i, j)\in S}t(i, j)$ satisfies $r(s)=0$,  $s_1=\wedge_{(i, j)\in S_1}t(i, j)$ and $s_2=\wedge_{(i, j)\in S_2}t(i, j)$ are the lower cover of $s$, and $s_1\wedge s_2 =\wedge_{(i, j)\in S_3}t(i, j)$,
    \end{enumerate}
    \item $\mathrm{rank}(M_{\varphi_{\GG, c^*}}) = \mathrm{rank}(M_{\hat\varphi_{\GG, c^*}})$,
    \item $\dim(\ker(M^t_{\varphi_{\GG, c^*}})) = \binom{m+1}{2}-2h(\GG)-1$.
\end{enumerate}
\end{theorem}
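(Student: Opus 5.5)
The plan is to get all three parts from Theorem~\ref{thm: linear span combinatorially}, Corollary~\ref{cor: f0 2 lower cover}, Lemma~\ref{lem: complete möbius function} and the explicit shape of $P_{\varphi_{\GG,c^\ast}}$ in Theorem~\ref{thm: poset characterization}. The key observation is that the monomial support $\mathcal T$ has exactly $2h(\GG)+1$ elements.

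First I would determine $N_{\GG}$. By Proposition~\ref{prop: properties of the poset}\ref{item: rows}, every element $[h]^w$ satisfies $w\le 2(h(\GG)-h)$, so its degree satisfies $w+2h\le 2h(\GG)$. The bound is attained by $t(v,v)$ for a vertex $v$ of maximal height. Hence, by Lemma~\ref{lem: degree closure}, $\mathcal T=\{\omega,\omega\lambda,\dots,\omega\lambda^{2h(\GG)}\}$ and $|\mathcal T|=2h(\GG)+1$.

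\textbf{Parts (2) and (3).} I will show that the rows of $M_{\varphi_{\GG,c^\ast}}$ alone already span $\mathbb R^{\mathcal T}$.
\begin{enumerate}
\item By Proposition~\ref{prop: properties of the poset}\ref{item: columns}, the elements $[h]^0$ for $0\le h\le h(\GG)$ are trek vectors $t(v,v)$. The vector $[h]^0$ has support $\{0,2,\dots,2h\}$. These vectors are therefore unitriangular and span the even-degree coordinates.
\item The elements $[h]^1$ for $0\le h\le h(\GG)-1$ are the trek vectors of edges. The vector $[h]^1$ has support $\{1,3,\dots,2h+1\}$, so these span the odd-degree coordinates.
\end{enumerate}
Thus $\mathrm{rank}(M_{\varphi_{\GG,c^\ast}})=|\mathcal T|$. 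Since $M_{\hat\varphi_{\GG,c^\ast}}$ has the same $|\mathcal T|$ columns and contains $M_{\varphi_{\GG,c^\ast}}$ as a row submatrix, its rank is squeezed to the same value, giving (2). Part (3) is then rank–nullity: $M_{\varphi_{\GG,c^\ast}}^t$ has $\binom{m+1}{2}$ columns, so
\[
\dim\ker\big(M_{\varphi_{\GG,c^\ast}}^t\big)=\binom{m+1}{2}-(2h(\GG)+1).
\]

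\textbf{Part (1).} I would verify the hypothesis of Theorem~\ref{thm: linear span combinatorially}, namely that the nonzero residue vectors are linearly independent.
\begin{enumerate}
\item By Corollary~\ref{cor: f0 2 lower cover}, $r(s)\ne 0$ exactly when $s\neq 0$ has a single lower cover.
\item By Proposition~\ref{prop: properties of the poset}\ref{item: lower cover}, that single cover is $s_1=[h-1]^w$, or $0$ when $h=0$. Then $\bigvee_{s'\prec s}s'=s_1$, so $r(s)=s-s_1=e_{2h+w}$, a standard basis vector indexed by the degree of $s$.
\item The elements on a fixed diagonal $d$ form a chain. Any element of diagonal $d$ that is not the bottom of this chain has a second lower cover, namely the next element of the chain (again by Proposition~\ref{prop: properties of the poset}\ref{item: lower cover}).
\item Hence each diagonal contributes exactly one nonzero residue, and distinct diagonals give distinct basis vectors. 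The nonzero residues are therefore independent; this also shows they are standard basis vectors, which is convenient later.
\end{enumerate}

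Theorem~\ref{thm: linear span combinatorially} then says the span is cut out by two families of forms. The first is the identifications $z_S-z_{S'}$ of coordinates with equal meets, which is (a). The second is the linear $P$-invariants $\sum_{s'\preceq s}\mu(s',s)z_{S'}$ for $r(s)=0$.

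For the second family I plug in Lemma~\ref{lem: complete möbius function}. When $s$ has two lower covers $s_1,s_2$, the only nonzero Möbius values are $\mu(s,s)=1$, $\mu(s_1,s)=\mu(s_2,s)=-1$ and $\mu(s_1\wedge s_2,s)=1$. This gives exactly $z_S-z_{S_1}-z_{S_2}+z_{S_3}$, which is (b).

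The remaining element with $r(s)=0$ is $s=0$, whose invariant is simply $z_S=0$ for any $S$ whose meet is the zero vector. I would absorb this into (a) and (b), either via the convention that the zero element is the empty meet (its coordinate identically $0$), or by noting that it follows from an instance of (b) together with (a).

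The main obstacle I expect is in Part (1): the bookkeeping around the element $0$, and making sure the one-nonzero-residue-per-diagonal count is airtight. In particular, I must check that the bottom of each diagonal really has only its column cover. This is where Theorem~\ref{thm: poset characterization}'s explicit covering relations would be used.
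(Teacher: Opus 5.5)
Your proof is correct. Part~(1) follows the paper's route: check that the nonzero residues are independent, then read the invariants off the closed-form Möbius function. Your diagonal argument makes precise a step the paper only asserts: every non-bottom element of a diagonal has a second lower cover, so the nonzero residues are exactly one standard basis vector $e_d$ for each $d=0,\dots,2h(\GG)$.

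For Parts~(2) and~(3) you take a genuinely different and shorter route. The paper first gets $\mathrm{rank}(M_{\hat\varphi_{\GG,c^\ast}})=2h(\GG)+1$ from the residues. It then argues, by an induction on width using identities such as $[h+1]^{w-2}=[h]^w+[0]^{w-2}$, that every meet vector lies in the row span of $M_{\varphi_{\GG,c^\ast}}$. You instead exhibit $2h(\GG)+1$ unitriangular rows of $M_{\varphi_{\GG,c^\ast}}$ itself, namely the vectors $t(v,v)$ and the edge vectors. So $M_{\varphi_{\GG,c^\ast}}$ already has full column rank $|\mathcal T|$, and~(2) is forced by $\mathrm{rank}(M_{\varphi_{\GG,c^\ast}})\le\mathrm{rank}(M_{\hat\varphi_{\GG,c^\ast}})\le|\mathcal T|$. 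This makes~(2) and~(3) independent of the poset and Möbius machinery and sidesteps the bookkeeping over meet elements. The paper's strategy of rewriting meet rows through trek rows is the one still needed in settings where $M_{\varphi}$ does not have full column rank.

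On the element $0$: you are right that $r(0)=0$. Theorem~\ref{thm: linear span combinatorially} therefore also contributes the form $z_{S_0}$ for any $S_0$ with zero meet, a point the paper's proof passes over. However, your second option, deriving $z_{S_0}$ from (a) and (b), cannot work. Let $N$ be the number of coordinates $z_S$ and $k_2$ the number of elements with two lower covers. Then (a) spans a space of dimension $N-|P_{\varphi_{\GG,c^\ast}}|$, and (b) adds at most $k_2$ more. Since $|P_{\varphi_{\GG,c^\ast}}|=1+(2h(\GG)+1)+k_2$, the forms (a) and (b) together span at most $N-(2h(\GG)+1)-1$ dimensions, one fewer than the codimension of the span.

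Concretely, for the tree $1\to 2$ no element has two lower covers, so there are no forms of type (b). Yet $z_{\{(1,1),(1,2)\}}$ vanishes on $\mathcal V_{\hat\varphi_{\GG,c^\ast}}$. So statement~(1) is true only under your first option: the convention that coordinates with zero meet are identically $0$, as with $\sigma_{wz}:=0$ in Theorem~\ref{thm: basis}. Drop the second option.
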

\begin{proof}
Note that, for $\emptyset\neq S\subseteq\{(i,j) : 1\leq i\leq j\leq m\}$ % $S\in 2^{[m]\times [m]\setminus \emptyset}$, % \marina{[maybe?] $S \in 2^{\{(i,j) : 1 \leq i \leq j \leq m\}\setminus \{\emptyset\}} $}, 
the polynomials in (1a) correspond to elementary row operations on $M_{\hat\varphi_{\GG,c^\ast}}$ that send the row indexed by $S'\neq S$ satisfying $\wedge_{(i, j)\in S}t(i, j) = \wedge_{(i, j)\in S'}t(i, j)$ to $0$. 
By the Möbius inversion formula in Lemma~\ref{lem: complete möbius function} and Corollary~\ref{cor: f0 2 lower cover}, the remaining polynomials in (1b) are the linear $P_{\hat\varphi_{\GG,c^\ast}}$-invariants for $[h]^w\in P_{\varphi_{\GG,c^\ast}}$ with residue vector $0$. 
Hence, by Theorem~\ref{thm: linear span combinatorially}, to prove the polynomials in (1) define the minimal linear subspace containing $\mathcal{V}_{\hat\varphi_{\GG,c^\ast}}$, it suffices to prove that the nonzero residue vectors $r([h]^w)\neq 0$ of $[h]^w\in P_{\varphi_{\GG,c^\ast}}$ are all linearly independent. 

By Corollary~\ref{cor: f0 2 lower cover}, $r([h]^w)\neq 0$ if and only if $[h]^w$ has exactly one element in its lower cover. 
By Proposition~\ref{prop: properties of the poset}~\ref{item: lower cover}, this must be the element $[h-1]^w$ (which is equal to $0$ if $h = 0$). 
By the Möbius inversion formula in Lemma~\ref{lem: complete möbius function}, $r([h]^w) = [h]^w - [h-1]^w = [0]^{2h + w}$. 
For $h > 0$, there are at most two $w$ for which $r([h]^w)\neq 0$, one with $w$ even and the other with $w$ odd. 
It is not hard from here to see that the resulting vectors $[0]^{2h + w}$ for $h\geq 0$ are $[0]^0,\ldots, [0]^{2h(\GG)}$. 
Since these are all linearly independent, the polynomials in (1) span the minimal linear subspace containing $\mathcal{V}_{\hat\varphi_{\GG,c^\ast}}$. 
It also follows that the $\mathrm{rank}(M_{\hat\varphi_{\GG,c^\ast}}^t) = 2h(\GG) + 1$.

    For (2), let $s\in P_{\varphi_{\GG, c^*}}$ be a meet element. Suppose $s$ is denoted by $[h]^w$ for some $0\leq h\leq h(\GG)$ and $w\geq 0$. By definition, $[h+1]^w , [h+1]^{w+2}\in P_{\varphi_{\GG, c^*}}$ and
    \[
    [h]^w = [h+1]^w \wedge [h+1]^{w+2}.
    \]
    We also have the linear equation $[h+1]^{w-2} - [h]^w - [0]^{w-2}=0$.
    Assume that there are no more meet elements with smaller width than $w$ of the same parity. Then, the equation allows us to write the vector  $[h]^w $ as a linear combination of columns of $M^t_{\varphi_{\GG, c^*}}$. Then, by inductively increasing the width we show that each meet vector belongs to the columns span of $M^t_{\varphi_{\GG, c^*}}$.

     For (3), we have observed that $\mathrm{rank}(M_{\varphi_{\GG,c^\ast}}^t) = \mathrm{rank}(M_{\hat\varphi_{\GG,c^\ast}}^t) = 2h(\GG) + 1$. Hence,  
     \[
     \dim(\ker(M^t_{\varphi_{\GG, c^*}})) = \binom{m+1}{2} -\mathrm{rank}(M_{\varphi_{\GG,c^\ast}}^t) = \binom{m+1}{2} -\mathrm{rank}(M^t_{\hat\varphi_{\GG, c}}) = \binom{m+1}{2}-2h(\GG)-1.
     \]
     % since the rank of $M_{P_{\GG, c}}$ is determined by the number of elements in the poset $s\in P_{\GG, c^*}$ for which $f(s)\neq 0$.
\end{proof}

When $(\GG,c^\ast)$ is a $\pi$-graph, we have $\R[P_{\varphi_{\GG,c^\ast}}] = \R[\sigma_{ij} : 1\leq i\leq j \leq m]$, and $I_{\varphi_{\GG,c^\ast}} = I_{\hat\varphi_{\GG,c^\ast}}$.  So the result of Theorem~\ref{thm: basis general directed tree} simplifies.

\begin{theorem}
\label{thm: basis}
Let $\GG$ be a directed tree, and assume $(\GG,\cast)$ is a $\pi$-graph.
Every linear polynomial in $\mathbb{R}[\sigma_{ij} : 1\leq i\leq j\leq m]$ vanishing on $\mathcal{V}_{\varphi_{\GG,c^\ast}}$ is a linear combination of the following:
\begin{enumerate}[label=(\arabic*), ref=(\arabic*)]
	\item $\sigma_{ij} - \sigma_{k\ell}$, where $t(i,j) = t(k,\ell)$, and
	%\item $\sigma_{ij} - \sigma_{k\ell} - \sigma_{uv} + \sigma_{wz}$, where $s = t(i,j), s_1 = t(k,\ell), s_2 = t(u,v), s_1\wedge s_2 = t(w,z)$ and $f(s) = 0$.
    \item $\sigma_{ij} - \sigma_{k\ell} - \sigma_{uv} + \sigma_{wz}$, where $s = t(i,j)$ satisfies $r(s) = 0$, $\{s_1 = t(k,\ell), s_2 = t(u,v)\}$ is the lower cover of $s$, and $s_1 \wedge s_2 = t(w,z)$ (with $\sigma_{wz} := 0$ whenever $s_1 \wedge s_2 = 0$). 
\end{enumerate}
\end{theorem}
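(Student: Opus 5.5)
The plan is to derive the statement from Theorem~\ref{thm: basis general directed tree}, using the $\pi$-graph hypothesis to remove the elimination step. By Definition~\ref{def: pi-graph}, $P_{\varphi_{\GG,c^\ast}}$ is a $\pi$-system. So its ground set lies in $\{0\}\cup\{t(i,j) : 1\leq i\leq j\leq m\}$. For every nonempty $S$, the meet $\wedge_{(i,j)\in S}t(i,j)$ is therefore either $0$ or equal to some $t(k,\ell)$.

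First, identify variables. For each $S$, relation (1a) of Theorem~\ref{thm: basis general directed tree} gives $z_S = \sigma_{k\ell}$ on the linear span whenever $\wedge_S t(i,j) = t(k,\ell)$. When the meet is $0$, the element $0$ has empty lower cover, so $r(0)=0$; the corresponding linear $P$-invariant is just $z_S$, so $z_S = 0$ there. Substituting these identifications, every form in (1a) between two $\sigma$'s becomes $\sigma_{ij}-\sigma_{k\ell}$ with $t(i,j)=t(k,\ell)$, which is type (1) of the statement. Every form in (1b) becomes $\sigma_{ij}-\sigma_{k\ell}-\sigma_{uv}+\sigma_{wz}$, with $\sigma_{wz}:=0$ when $s_1\wedge s_2=0$, which is type (2). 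Here Lemma~\ref{lem: complete möbius function} and Corollary~\ref{cor: f0 2 lower cover} guarantee that the only elements with $r(s)=0$ (besides $0$) are those with exactly two lower covers. They also guarantee that the Möbius coefficients are $1,-1,-1,1$ as written.

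Next, conclude via elimination. The linear part of $I_{\varphi_{\GG,c^\ast}}$ is the linear part of $I_{\hat\varphi_{\GG,c^\ast}}$ intersected with $\mathbb R[\sigma_{ij}]$, by \cite[Lemma 4.26]{garrote2026unirational}. Take any linear form $\ell(\sigma)$ vanishing on $\mathcal V_{\varphi_{\GG,c^\ast}}$. It lies in the span of (1a) and (1b) by Theorem~\ref{thm: basis general directed tree}(1), written in the $z$-variables. Now apply the linear substitution $\pi$ that sends $z_S\mapsto\sigma_{k\ell}$ (or to $0$). It fixes $\ell$, since the singleton $z_{\{(i,j)\}}$ is $\sigma_{ij}$. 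It also maps each generator to a form of type (1), type (2), or $0$. Hence $\ell$ is a combination of forms of types (1) and (2).

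The main point requiring care is the choice of representative $\sigma_{k\ell}$ when several pairs share the same trek vector. Any fixed choice works, since the differences are themselves of type (1). One must also check that $\pi$ sends the generators (1a) involving $z_S$ with meet $0$ to $0$, which is harmless.
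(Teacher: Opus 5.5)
Your proposal is correct and follows the paper's route: the paper obtains this theorem directly from Theorem~\ref{thm: basis general directed tree}, observing that in the $\pi$-graph case no genuinely new variables appear and $I_{\varphi_{\GG,c^\ast}} = I_{\hat\varphi_{\GG,c^\ast}}$, so the elimination step is vacuous. Your substitution argument makes that one-line remark rigorous: it sends each $z_S$ to a representative $\sigma_{k\ell}$ or to $0$, absorbs the choice of representatives into type-(1) forms, and handles the meet-$0$ variables explicitly, which the paper glosses over.
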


A characterization of directed trees $\GG$ for which $(\GG,c^\ast)$ is a $\pi$-graph is given in Theorem~\ref{thm: pi-graphs}.

\begin{example}
Let $(\GG, c^\ast)$ be the colored directed tree from the Example \ref{ex: directed tree poset}, which is a $\pi$-graph. 
The poset $P_{\varphi_{\GG, c^\ast}}$ is visualized in Figure \ref{fig: example notation poset}.
By Theorem \ref{thm: basis}, every linear polynomial that vanishes on $\mathcal{V}_{\varphi_{\GG, c^\ast}}$ is generated by the following set:
\begin{align*}
    \sigma_{22}-\sigma_{44}, \qquad \sigma_{13}-\sigma_{24}, \qquad \sigma_{12}-\sigma_{14},\\
    \sigma_{23}-\sigma_{34}-\sigma_{12}, \qquad \sigma_{22}-\sigma_{11} - \sigma_{13}, 
\end{align*}
where the generators from the first and the second lines correspond to polynomials of the first and second type, respectively.
   
\end{example}

\section{Toric reparameterizations}
\label{subsec: toric reparam constant directed tree}

The structure of the poset $P_{\varphi_{\GG,c^\ast}}$ suggests a natural linear transformation of the ambient space containing the colored DAG variety $\mathcal{V}_{\hat\varphi_{\GG,c^\ast}}$, revealing that $\mathcal{V}_{\varphi_{\GG,c^\ast}}$ is a toric variety. % from which it can be seen that the Zariski closure of $\mathcal{M}(\GG,c^\ast)$ is a toric variety. 
Specifically, when $\GG$ is a directed tree, $\mathcal{V}_{\hat\varphi_{\GG,c^\ast}}$ is an invertible linear transformation of the rational normal curve of degree $2h(\GG)$.
%In case $\GG$ is a directed tree, one obtains that $\mathcal{M}(\GG,c^\ast)$ is a linear invertible transformation of the rational normal curve, which agrees with Remark \ref{ex: rational normal curve}. 

\begin{proposition}\label{prop: constant directed toric}
    Let $\GG = ([m], E)$ be a directed tree. 
Then $I_{\hat\varphi_{\GG, c^*}}$ is a toric ideal after a linear change of coordinates. Hence, $\mathcal{V}_{\hat\varphi_{\GG,c^\ast}}$ and $\mathcal{V}_{\varphi_{\GG,c^\ast}}$ are toric varieties.
\end{proposition}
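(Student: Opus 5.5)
The plan is to apply Theorem~\ref{thm: poset toric}, which asks only that every nonzero residue vector $r(s)$, $s\in P_{\varphi_{\GG,c^\ast}}$, be a scalar multiple of a standard basis vector of $\mathbb{R}^{\mathcal{T}}$. Once this is verified, the theorem says that $\mathcal{V}_{\hat\varphi_{\GG,c^\ast}}$ is toric, with toric reparametrization given by $F_{\hat\varphi_{\GG,c^\ast}}$, and that $\mathcal{V}_{\varphi_{\GG,c^\ast}}$ is toric as well.

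First, Corollary~\ref{cor: f0 2 lower cover} shows that $r(s)\neq 0$ exactly when $s$ has a single element in its lower cover. Proposition~\ref{prop: properties of the poset}\ref{item: lower cover} then says this element is $[h-1]^w$ when $s=[h]^w$ with $h>0$, and is $0$ when $h=0$. For $s=0$ we have $r(0)=0$.

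Next, Lemma~\ref{lem: complete möbius function} together with Theorem~\ref{thm: mobius} gives $r([h]^w)=[h]^w-[h-1]^w$. By Definition~\ref{def: trek polynomials for directed trees}, the support of $p_{w,h}$ is $\{w,w+2,\dots,w+2h\}$ and the support of $p_{w,h-1}$ is the same set with $w+2h$ removed. The difference is therefore the standard basis vector $e_{\omega\lambda^{2h+w}}$, that is, $[0]^{2h+w}$. When $h=0$ we get $r([0]^w)=[0]^w$, which is again a single monomial. So every nonzero residue vector is a standard basis vector, and the hypothesis of Theorem~\ref{thm: poset toric} holds.

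It remains to identify the toric variety explicitly. The distinct nonzero residues are $e_{\omega\lambda^d}$ for $0\le d\le 2h(\GG)$, as computed in the proof of Theorem~\ref{thm: basis general directed tree}. The row-equivalence of $F_{\hat\varphi}$ and $M_{\hat\varphi}$ then gives an invertible linear change of coordinates. Concretely, we keep one representative coordinate $z_S$ per residue vector, apply the Möbius inversion $z_S\mapsto\sum_{s'\preceq s}\mu(s',s)z_{S'}$ (which is unitriangular), and use the linear forms of Theorem~\ref{thm: basis general directed tree}(1) for the remaining coordinates. After this change, the coordinates become the monomials $\omega\lambda^d$ for $d=0,\dots,2h(\GG)$, together with coordinates that vanish identically. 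This is a cone over the rational normal curve, so $I_{\hat\varphi_{\GG,c^\ast}}$ becomes a toric ideal, namely the $2\times 2$ minors of a Hankel matrix plus linear forms.

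The main obstacle is making this linear change of coordinates genuinely invertible on the whole ambient space $\mathbb{R}^{P_{\varphi_{\GG,c^\ast}}}$ and not merely on the span of the variety. The repeated coordinates $z_S=z_{S'}$ and the coordinates with zero residue have to be handled by an explicit unitriangular substitution. Finally, toricity of $\mathcal{V}_{\varphi_{\GG,c^\ast}}$ follows because it is a coordinate projection of $\mathcal{V}_{\hat\varphi_{\GG,c^\ast}}$ that is injective on the linear span (Theorem~\ref{thm: basis general directed tree}(2), equal ranks). It is therefore linearly isomorphic to the same rational normal curve, or we may simply invoke the final clause of Theorem~\ref{thm: poset toric}.
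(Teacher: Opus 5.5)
Your proposal is correct and follows the paper's proof. Both arguments use Corollary~\ref{cor: f0 2 lower cover} and Proposition~\ref{prop: properties of the poset}\ref{item: lower cover} to see that a nonzero residue arises only when the lower cover is the single element $[h-1]^w$ (or $0$ when $h=0$), compute $r([h]^w)=[0]^{2h+w}$ (respectively $[0]^w$), and conclude with Theorem~\ref{thm: poset toric}. The paper reads the residue off directly from $r(s)=s-\bigvee_{s'\prec s}s'$ rather than through the M\"obius function. It also does not need your explicit coordinate change or your projection argument, because Theorem~\ref{thm: poset toric} already supplies both the change of coordinates and the toricity of $\mathcal{V}_{\varphi_{\GG,c^\ast}}$.
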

\begin{proof}
Let $r(s)$ be the residue vector for $s = [h]^w\in P_{\varphi_{\GG, c^*}}$. 
By Corollary~\ref{cor: f0 2 lower cover}, $r(s) = 0$ whenever $s$ has exactly two %or more 
elements in its lower cover. 
Otherwise, $s = [h]^w$ must have one element in its lower cover, either the element $[h-1]^w$ if $h\geq 1$ or the $0$ element if $h=0$. 
In the first case, 
$
r(s) = s - \bigvee_{s'\prec s} s' = [h]^w - [h-1]^w = [0]^{2h+w},
$
which corresponds to the monomial $\lambda^{2h+w}$ and thus is a standard basis vector.

For the second case, $s = [0]^w$, and then, 
$
r(s) = s - \bigvee_{s'\prec s} s' = [0]^w,
$
which also corresponds to a monomial $\lambda^{w}$ and therefore is also a standard basis vector.
By Theorem~\ref{thm: poset toric}, $I_{\hat\varphi_{\GG, c^*}}$ is a toric ideal after a linear change of coordinates and the varieties $\mathcal{V}_{\hat\varphi_{\GG,c^\ast}}$ and $\mathcal{V}_{\varphi_{\GG,c^\ast}}$ are toric.    
\end{proof}

In particular, $\mathcal{V}_{\hat\varphi_{\GG,c^\ast}}$ is a rational normal curve of degree $2h(\GG)$ living in the minimal linear subspace defined by the linear forms in Theorem~\ref{thm: basis general directed tree}. 

\begin{remark}
    Proposition~\ref{prop: constant directed toric} uses the poset $P_{\varphi_{\GG,c^\ast}}$ to show that the Zariski closure of the colored DAG model $\mathcal M(\GG,c^\ast)$ is toric for any directed tree. 
    The advantage of this method is that the toric structure is revealed by a change of coordinates of the ambient space containing $\mathcal M(\GG,c^\ast)$.  
    In the case of directed trees, one could also use the nonlinear change of coordinates described in \cite{sullivant2023algebraic} to show that these models are toric. 
    It is worth noting the two methods are incomparable; that is, the method of \cite{sullivant2023algebraic} shows some $\mathcal{M}(\GG,c)$ are toric that our method does not, and our method shows some models are toric that are not covered by the method of \cite{sullivant2023algebraic} (see for instance the discussion in~\cite{garrote2026unirational}). %\textcolor{red}{(see, for instance Figure~\ref{fig: colored c} and the discussion in Example~\ref{ex: toric})}. 
    It would be interesting to understand how these two methods work together to describe the toric colored DAG models. 
\end{remark}

The main value of establishing this toric structure is that it can be used to compute an explicit generating set for the model, thereby solving Problem~\ref{prob: package I}\eqref{prob: implicitization}.  This is the content of the next section.

\section{Implicitization}
\label{subsec: implicitization constant directed tree}
Let $\GG = ([m], E)$ be a directed tree, and $c^\ast$ the constant coloring. Following the notation from Section~\ref{sec: preliminaries}, let $E_{\hat\varphi_{\GG,c^\ast}}$ represent the invertible linear change of coordinates dictated by the residue vectors; i.e. $F_{\hat\varphi_{\GG,c^\ast}} = E_{\hat\varphi_{\GG,c^\ast}}M_{\hat\varphi_{\GG,c^\ast}}$. For each variable $z_S \in \mathbb{R}[P_{\varphi_{\GG,c^\ast}}]$ corresponding to an element $s = [h]^w$, let $z_{S'}$ correspond to its vertical predecessor $[h-1]^w$ (if $h \geq 1$). We introduce the transformed variables:
$$r_s = \begin{cases} z_S - z_{S'} & \text{if } h \geq 1 \text{ for } s \\ z_S & \text{otherwise.} \end{cases}$$

Let $\hat\varphi_{\GG,c^\ast}^\ast : \mathbb{R}[P_{\varphi_{\GG,c^\ast}}] \to \mathbb{R}[\omega_1,\ldots, \omega_n,\lambda_1,\ldots, \lambda_e]$ denote the algebraic pullback of the map $\hat\varphi_{\GG,c^\ast}$. 
Then $I_{\hat\varphi_{\GG,c^\ast}} = \ker(\hat\varphi_{\GG,c^\ast}^\ast)$.
Under  $\hat\varphi_{\GG,c^\ast}^\ast$, each $z_S\in \mathbb{R}[P_{\varphi_{\GG,c^\ast}}]$ evaluates to $\omega(\lambda^w + \dots + \lambda^{2h+w})$. Therefore, the variables $r_s$ isolate the highest-degree term, meaning every $r_s$ maps directly to a pure monomial: $\hat\varphi_{\GG,c^\ast}^\ast(r_s) = \omega\lambda^d$, where $d = 2h+w$ is the degree of $p_s = p_{w,h}$.

As established in Proposition~\ref{prop: properties of the poset}\ref{item: degree}, elements in the poset $P_{\varphi_{\GG,c^\ast}}$ share the same degree $d$ if and only if they lie on the $d$-th diagonal. 
Consequently, for any given diagonal, all associated variables $r_S$ map to the exact same monomial $\omega\lambda^d$. 
The linear ideal $L_{\GG, c^\ast}\subset\mathbb{R}[P_{\varphi_{\GG,c^\ast}}]$ generated by the forms in Theorem~\ref{thm: basis general directed tree} captures these redundancies by equating any variables that map to the same monomial. Quotienting by $L_{\GG, c^\ast}$ eliminates these duplicates, reducing the coordinate ring to exactly $2h(\GG) + 1$ independent variables (one for each diagonal).

For consistency, we select one specific representative element $s_d \in P_{\varphi_{\GG, c^\ast}}$ for each diagonal by choosing the element with the smallest height $h$. This yields a well-defined set of variables $r_{s_d}$ that map bijectively to the monomials $\omega\lambda^d$ for $d \in \{0, \dots, 2h(\GG)\}$. 
These $s_d$ correspond to exactly the $[h]^w\in P_{\varphi_{\GG,c^\ast}}$ with exactly one element in their lower cover, and hence the $[h]^w\in P_{\varphi_{\GG,c^\ast}}$ with non-zero residue vectors.
Moreover, we observed in Theorem~\ref{thm: basis general directed tree} that these residues $r([h]^w)$ are exactly the basis vectors corresponding to the monomials $\omega\lambda^{2h + w} = \omega\lambda^d$. 
It is a classical result in algebraic geometry that this specific monomial map parameterizes a rational normal curve of degree $2h(\GG)$, defined by the exponent matrix:
$$A = \begin{pmatrix} 1 & 1 & \cdots & 1 \\ 0 & 1 & \cdots & 2h(\GG) \end{pmatrix}.$$

The vanishing ideal of this rational normal curve is generated by the $2 \times 2$ minors of the corresponding Hankel matrix, yielding the binomials $r_{s_i}r_{s_j} - r_{s_{i-1}}r_{s_{j+1}}$ for $1 \leq i \leq j \leq 2h(\GG) - 1$. Inverting the linear change of coordinates allows us to pull this generating set back to the original variables, yielding the full vanishing ideal for the model.
Let $\mathfrak{I}_m = \{(i,j) : 1 \leq i\leq j \leq m\}$.

\begin{theorem} \label{thm: implicitization} 
Let $\GG$ be a directed tree. For each $0\leq d\leq 2h(\GG)$, choose $\emptyset \neq S_d \subseteq \mathfrak{I}_m$ %$S_d\in 2^{[m]\times [m]\setminus\emptyset}$ \marina{[maybe?] $S_d \in 2^{\{(i,j) : 1 \leq i \leq j \leq m\}\setminus \{\emptyset\}} $} 
with $\wedge_{(i,j)\in S_d}t(i,j) = [h]^w \in P_{\varphi_{\GG, c^\ast}}$ where $[h]^w$ is the least element in $P_{\varphi_{\GG,c^\ast}}$ satisfying $2h + w = d$ (i.e. the bottom of the $d$-th diagonal), and $\emptyset \neq S_d'\subseteq \mathfrak{I}_m$  
satisfying $\wedge_{(i,j)\in S_d'}t(i,j) = [h-1]^w$ (with the convention that $z_{S_d'} := 0$ if $h=0$).
Then
% be the representative element on the corresponding diagonal with the smallest height $h$, and let $S'_k$ be its vertical predecessor in the lower cover (where $z_{S'_k} = 0$ if no such predecessor exists). 
% The vanishing ideal $I_{P_{\varphi_{\GG,c^\ast}}}$ is generated by: 
\[ I_{\hat\varphi_{\GG,c^\ast}} = \langle (z_{S_i} - z_{S_i'})(z_{S_j} - z_{S_j'}) - (z_{S_{i-1}} - z_{S_{i-1}'})(z_{S_{j+1}} - z_{S_{j+1}'}) : 1 \leq i \leq j \leq 2h(\GG) - 1 \rangle + \langle L_{\GG, c^\ast} \rangle, \] 
where $L_{\GG,c^\ast}$ denotes the linear forms in Theorem~\ref{thm: basis general directed tree} defining the linear span of $\mathcal{V}_{\hat\varphi_{\GG,c^\ast}}$. 
\end{theorem}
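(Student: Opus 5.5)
The plan is to reduce $I_{\hat\varphi_{\GG,c^\ast}}$ to the ideal of a rational normal curve by an invertible linear change of coordinates on $\mathbb{R}[P_{\varphi_{\GG,c^\ast}}]$, and then pull the known generators back. Concretely, I would use the variables $r_s$ introduced above: for each $s=[h]^w$ put $r_s = z_S - z_{S'}$ (or $z_S$ if $h=0$). This substitution is unitriangular with respect to height within each column of the poset, so it is invertible. Under $\hat\varphi^\ast_{\GG,c^\ast}$ it sends $r_s\mapsto \omega\lambda^{2h+w}$, because $p_{w,h}-p_{w,h-1}=\lambda^{w+2h}$. Before this I would first add the linear forms of type (1a) in Theorem~\ref{thm: basis general directed tree}. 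These identify all $z_S$ indexing the same poset element, so the ring may be indexed by elements $s\in P_{\varphi_{\GG,c^\ast}}$ rather than by subsets $S$.

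The next step is to show that $L_{\GG,c^\ast}$ is exactly the kernel of the linear part. In the $r$-coordinates, the kernel of $\hat\varphi^\ast$ restricted to linear forms is spanned by the differences $r_s-r_{s_d}$ with $s$ on the $d$-th diagonal, where $s_d$ is the bottom of that diagonal. By Theorem~\ref{thm: basis general directed tree}(1), the span of $L_{\GG,c^\ast}$ is the full degree-one part of $I_{\hat\varphi_{\GG,c^\ast}}$. To confirm this directly, note that for $s=[h]^w$ with two lower covers $[h-1]^w$ and $[h-i]^{w+2i}$, the form in (1b) reads
\[
z_S - z_{S_1}-z_{S_2}+z_{S_3}=r_s - r_{[h-i]^{w+2i}},
\]
because $s_1\wedge s_2=[h-i-1]^{w+2i}$ by Lemma~\ref{lem: directed tree meet polynomial}. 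These forms chain each element of a diagonal down to $s_d$. Hence
\[
\mathbb{R}[P_{\varphi_{\GG,c^\ast}}]/L_{\GG,c^\ast}\cong \mathbb{R}[r_{s_0},\ldots,r_{s_{2h(\GG)}}].
\]
Here I use that the diagonals are indexed by $d=0,\ldots,2h(\GG)$ and that every such $d$ occurs, which follows from the residue vectors $[0]^0,\ldots,[0]^{2h(\GG)}$ in Theorem~\ref{thm: basis general directed tree}.

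Then $I_{\hat\varphi_{\GG,c^\ast}} = L_{\GG,c^\ast} + J$, where $J$ is the kernel of the map $r_{s_d}\mapsto\omega\lambda^d$ lifted back. To justify this: modulo $L_{\GG,c^\ast}$, any $f$ in the kernel is a polynomial in the $r_{s_d}$ alone, and it lies in the kernel of the monomial map with exponent matrix $A$. The kernel of that monomial map is the ideal of the rational normal curve of degree $2h(\GG)$ in the cone over $\mathbb{P}^{2h(\GG)}$. This ideal is classically generated by the $2\times 2$ minors of the Hankel matrix, which produce the binomials $r_{s_i}r_{s_j}-r_{s_{i-1}}r_{s_{j+1}}$ for $1\le i\le j\le 2h(\GG)-1$. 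Substituting $r_{s_d}=z_{S_d}-z_{S_d'}$ gives exactly the stated quadrics.

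I expect the main obstacle to be the bookkeeping in the linear step: checking that each (1b) form collapses to a difference of two $r$'s on the same diagonal, and that $S_d'$ correctly indexes $[h-1]^w$. The case $h=0$ needs separate handling, using the convention $z_{S'}=0$.
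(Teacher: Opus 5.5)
Your proposal is correct and follows essentially the same route as the paper. Both arguments use the unitriangular substitution $r_s$ with $\hat\varphi^\ast_{\GG,c^\ast}(r_s)=\omega\lambda^{2h+w}$, collapse each diagonal onto its bottom element $s_d$ modulo $L_{\GG,c^\ast}$, and pull back the Hankel $2\times 2$ minors of the degree-$2h(\GG)$ rational normal curve. Your explicit check that each (1b) form equals $r_s - r_{[h-i]^{w+2i}}$ (with $z_{S_3}=0$ when $s_1\wedge s_2=0$) supplies the step the paper only asserts, namely that $L_{\GG,c^\ast}$ equates exactly the variables mapping to the same monomial.
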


When $(\GG,c^\ast)$ satisfies the conditions of Theorem~\ref{thm: pi-graphs}, it is a $\pi$-graph.  
Hence, the above theorem gives a basis for the vanishing ideal of $I_{\varphi_{\GG,c^\ast}}\subset \mathbb{R}[\sigma_{ij} : 1 \leq i\leq j \leq m]$.

\section{Model distinguishability}
\label{subsec: lin model dist}
Consider the family of colored DAG models 
\[
\mathcal{M}_{\textrm{$c^\ast$-TREE}} = \{\mathcal M(\GG,c^\ast): \GG \textrm{ a directed tree}\}.
\]
In this section, we prove $\mathcal{M}_{\textrm{$c^\ast$-TREE}}$ is \emph{structurally identifiable}, meaning that $\mathcal{M}(\GG,c^\ast)\neq \mathcal{M}(\HH, c^\ast)$ for all $\mathcal{M}(\GG,c^\ast), \mathcal{M}(\HH, c^\ast) \in \mathcal{M}_{\textrm{$c^\ast$-TREE}}$ with $\GG \neq \HH$. 
This solves a special case of the \emph{model distinguishability problem} \cite[Problem~1.4]{garrote2026unirational} from the graphical models program in statistics (which, in turn, is a special case of the purely geometric Problem~\ref{prob: package I}\eqref{prob: distinguishability}).
Notably, structurally identifiable families of graphical models are exceedingly rare. 
In fact, there are only four known examples \cite{boege2024colored,peters2014identifiability, shimizu2006linear}, and three of them are subfamilies of the colored DAG models \cite{boege2024colored,peters2014identifiability}. 
The results in this section add a fifth family using the combinatorics of the poset $P_{\varphi_{\GG,c^\ast}}$.
%\marina{The results in this section focus on one such subfamily, $\mathcal{M}_{\textrm{$c^\ast$-TREE}}$, demonstrating how the underlying combinatorics of the poset $P_{\varphi_{\GG,c^\ast}}$ can be exploited to prove strictly stronger identifiability results: namely, that these models can be distinguished solely by their linear spans.}

To prove this result, we prove the stronger result that $\mathcal M(\GG,c^\ast),\mathcal M(\HH, c^\ast)\in \mathcal{M}_{\textrm{$c^\ast$-TREE}}$ are each contained in different linear subspaces. 
Consequently, the poset $P_{\varphi_{\GG,c^\ast}}$ also solves the \emph{linear equivalence problem} for the family $\mathcal{M}_{\textrm{$c^\ast$-TREE}}$ (see \cite[Problem 1.5]{garrote2026unirational}). 

% We now turn to the problem of model distinguishability (Problem~\ref{prob: colored distinguishability}).  
% We will show that $\mathcal{M}(\GG,c^\ast)\neq \mathcal{M}(\HH, c^\ast)$ for any two distinct directed trees $\GG$ and $\HH$ by proving a stronger result. 
% Specifically, in Theorem~\ref{thm: poset recovery general directed tree}, we will show that the directed tree $\GG$ is uniquely identifiable from the linear forms vanishing on $\mathcal{M}(\GG,c^\ast)$. 
% This implies two things: (1) that $\mathcal{M}(\GG,c^\ast)$ and $\mathcal{M}(\HH,c^\ast)$ have the same linear span if and only if $\GG = \HH$ and (2) that $\mathcal{M}(\GG,c^\ast) = \mathcal{M}(\HH,c^\ast)$ if and only if $\GG = \HH$. 
% In other words, the poset $P_{\varphi_{\GG,c^\ast}}$ is used to solve Problems~\ref{prob: linear equivalence} and~\ref{prob: colored distinguishability} for the family of constantly colored directed trees.

We will use the following technical lemma that describes how to recover graph properties from the linear relations. 

\begin{lemma}
\label{lem: special polynomials}
Let $\GG=([m], E)$ be a directed tree with $m\geq 3$ and let $L_{\GG,\cast}$ be the set of linear forms in Theorem~\ref{thm: basis general directed tree} vanishing on $\mathcal{M}(\GG,\cast)$. 
    \begin{enumerate}[label=(\arabic*), ref=(\arabic*)]
        \item\label{item: root} A vertex $i\in[m]$ is the root of $\GG$ if and only if $-\sigma_{ii} + \sigma_{jj} - \sigma_{kl} \in L_{\GG,\cast}$ for some $j,k,l\in [m]$, but $\sigma_{ii} - \sigma_{jj} - \sigma_{kl} \not\in L_{\GG,\cast}$ for any choice of $j,k,l\in [m]$.
        %\item Suppose vertex 1 is the root, and there is an edge $1\to 2$. If $\sigma_{ij}-\sigma_{12}-\sigma_{kl}\in L_G$, then $i$ and $j$ are connected by an edge. Moreover, $\sigma_{ij}$ corresponds to $[h]^1$, and  $\sigma_{kl}$ corresponds to $[h-1]^3$ for some $h$. 
        \item\label{item: level 1} Suppose vertex $r$ %\marina{(maybe $r$ instead of 1?)} 
        is the root and $h(\GG)\geq 2$. Then, there exists an edge $r\to k\in E$ if and only if  $\sigma_{kk} - \sigma_{rr} - \sigma_{rl}\in L_{\GG,\cast}$ for some $l\in[m]$. 
        %\item\label{item: edges} Suppose $\sigma_{ij}$ corresponds to $[h]^1$. Then, if $\sigma_{uv}-\sigma_{ij}- \sigma_{kl}\in L_{\GG,\cast}$ for some $k,l\in [n]$, then $u$ and $v$ are connected by an edge and $\sigma_{uv}$ corresponds to $[h']^1$, for some $h'> h$. 
        %\item\label{item: last edges} Suppose $\sigma_{ij}$ corresponds to $[h]^1$, and $\sigma_{kl}$ and $\sigma_{uv}$ correspond to different elements of an odd degree $2h+3$ and with $h(p_{uv})>1$. Then, if $\sigma_{kl}-\sigma_{ij}- \sigma_{uv}+\sigma_{u'v'}\in L_{\GG,\cast}$ for some $u', v'\in [n]$, then $k$ and $l$ are connected by an edge and $\sigma_{kl}$ corresponds to $[h+1]^1$.
        %\item \label{item: one more special polynomial}
        %If $\sigma_{uv} - \sigma_{ij} - \sigma_{u'v'} + \sigma_{kl} - \sigma_{bc} + \sigma_{ab}\in L_G$ where $k, l \in[n]$ s.t. they are not connected by an edge, $a, b, c\in [n]$, s.t. $a\to b\in E$ and $b\to c \in E$, and $\sigma_{ij}$ corresponds to $[h(\GG)-2]^1$ and $\sigma_{uv}$ and $\sigma_{u'v'}$ correspond to elements with degree $2h(\GG)-1$, then there is an edge connecting $u$ and $v$.
        \item\label{item: special polynomials}  
        Suppose vertex $r$ is the root and $h(\GG)\geq 2$. Also, suppose there exists an edge $r\to c$. If $\sigma_{uv} - \sigma_{rc}-\sigma_{kl}\in L_{\GG, \cast}$, then $u$ and $v$ are connected by an edge.
        %If $\sigma_{uv} - \sigma_{ij} + \sum_{k, l\in [m]}\beta_{kl}\sigma_{kl}\in L_{\GG, \cast}$, where the variable $\sigma_{ij}$ corresponds to $[h]^1$, and for all $\beta_{kl}>0$ we have that $\sigma_{kl}$ corresponds to an element with width greater than 1, then $u$ and $v$ are connected by an edge.
    \end{enumerate}
\end{lemma}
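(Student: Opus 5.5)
The plan is to translate each linear form into a statement about poset elements and then read off graph structure. By Theorem~\ref{thm: basis general directed tree}, the linear forms vanishing on $\mathcal{M}(\GG,\cast)$ are exactly those in the span of the forms (1a) and (1b), after eliminating meet variables. Under the pullback, $\sigma_{ij}\mapsto \omega p_{i,j}^{(\GG,\cast)} = \omega\, p_{w,h}$ with $[h]^w = t(i,j)$. Hence a three-term form $\pm\sigma_{ij}\pm\sigma_{kl}\pm\sigma_{uv}$ lies in $L_{\GG,\cast}$ iff the corresponding signed sum of the polynomials $p_{w,h}$ is zero. Since the elements of $\mathcal D$ are sums of consecutive even-step monomials, such an identity $p_{w_1,h_1} = p_{w_2,h_2} + p_{w_3,h_3}$ forces the following. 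One summand has the same width $w_1$ as the left side, and the other starts right after it with width $w_1+2h_2+2$ and ends at degree $w_1+2h_1$. In particular, with $[h]^w$ written for the left side, the identity is $[h]^w = [h']^w + [h-h'-1]^{w+2h'+2}$. I will prove this ``splitting'' description first, as a short lemma, and use it in all three parts.

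For (1): if $i=r$ then $t(r,r)=[0]^0 = 1$ as a polynomial. The form $\sigma_{rr}-\sigma_{jj}-\sigma_{kl}$ would require $1 = p_{jj}+p_{kl}$ with nonzero summands, which is impossible; it is also impossible with one summand zero, since $\sigma_{jj}\neq 0$ always. For the positive half, take a child $j$ of $r$ (it exists because $m\geq 3$ and $\GG$ is connected). Then $p_{jj} = 1+\lambda^2 = p_{rr} + p_{rj'}$ fails, since $p_{rj'}$ would have odd width. Instead I use $p_{jj}=1+\lambda^2 = p_{rr}+ \lambda^2$, where $\lambda^2=[0]^2=t(k,l)$. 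This needs a simple trek of length $2$ with top $r$: either two children of $r$, or a path $r\to j\to x$. One of these exists because $m\ge3$. Conversely, if $i\neq r$ then $h_\GG(i)\ge1$, so $p_{ii}=[h]^0$ with $h\ge1$ splits as $[0]^0+[h-1]^2$. This gives $\sigma_{ii}-\sigma_{rr}-\sigma_{kl}\in L$ once $[h-1]^2$ is realized. It is realized by the path from the ancestor of $i$ at height $h-1$ down two steps, or through $i$'s parent and a sibling. This violates the second condition, so $i$ cannot be the root.

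For (2): by the splitting lemma, $p_{kk}=p_{rr}+p_{rl}$ means $[h_\GG(k)]^0 = [0]^0 + [h_\GG(k)-1]^2$ with $t(r,l)=[h_\GG(k)-1]^2$. But $t(r,l)$ has top $r$, so its height is $0$. Hence $h_\GG(k)=1$, i.e.\ $r\to k$. Conversely, an edge $r\to k$ with $h(\GG)\ge2$ gives an $l$ with $d(r,l)$ of length $2$, and then $p_{kk}=1+\lambda^2=p_{rr}+p_{rl}$.

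For (3): $t(r,c)=[0]^1=\lambda$, so $p_{uv}=\lambda + p_{kl}$. By the splitting lemma $p_{uv}$ has width $1$, so the simple trek between $u$ and $v$ has length $1$, i.e.\ $u,v$ are adjacent. The main obstacle is the elimination subtlety: $L_{\GG,\cast}$ is stated in the $z$-variables, so I must make sure that membership of a form in $\sigma$-variables is tested by polynomial identity. This follows because $L$ generates the full degree-one part of $I_{\hat\varphi}$, whose intersection with the $\sigma$-ring is the linear part of $I_\varphi$ \cite[Lemma 4.26]{garrote2026unirational}. I also need to handle degenerate cases where a $\sigma$ equals zero or two indices coincide.
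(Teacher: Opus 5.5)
Your parts (2) and (3), and the forward direction of (1), follow the paper's argument: a direct coefficient comparison of trek polynomials. Your ``splitting'' observation simply packages the support-disjointness reasoning that the paper does case by case.

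The gap is in the converse of (1). You argue that every non-root $i$ violates the second condition, because $[h-1]^2$ with $h=h_{\GG}(i)$ is supposedly always realized ``by the path from the ancestor of $i$ at height $h-1$ down two steps, or through $i$'s parent and a sibling.'' Both constructions fail when $i$ is a leaf and the only child of its parent, and then the claim itself is false.

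Take the tree of Example~\ref{ex: directed tree poset}, with edges $1\to 2$, $1\to 4$, $2\to 3$. Vertex $3$ has height $2$, but no vertex at height $1$ is the top of a simple trek of length $2$, so $[1]^2\notin P_{\varphi_{\GG,\cast}}$. Since $p_{33}=1+\lambda^2+\lambda^4$ cannot be written as $p_{jj}+p_{kl}$, no form $\sigma_{33}-\sigma_{jj}-\sigma_{kl}$ lies in $L_{\GG,\cast}$ (compare the forms listed in the example after Theorem~\ref{thm: basis}). So vertex $3$ is excluded only by the \emph{first} condition, which your converse never uses.

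The repair is the paper's route. From $-\sigma_{ii}+\sigma_{jj}-\sigma_{kl}\in L_{\GG,\cast}$ you get $p_{jj}=p_{ii}+p_{kl}$ with $p_{kl}\neq 0$. Hence $h_{\GG}(j)>h_{\GG}(i)=h$, so $h\le h(\GG)-1$. The directed path from the root to $j$ then passes through vertices $x_{h-1}\to x_h\to x_{h+1}$, so $[h-1]^2=t(x_{h-1},x_{h+1})$ is realized. The identity $[h]^0=[0]^0+[h-1]^2$ then gives $\sigma_{ii}-\sigma_{rr}-\sigma_{x_{h-1}x_{h+1}}\in L_{\GG,\cast}$, contradicting the second condition.

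A smaller point concerns the root half. That $\sigma_{rr}-\sigma_{jj}-\sigma_{kl}\notin L_{\GG,\cast}$ should rest on $p_{kl}\neq 0$, not on $\sigma_{jj}\neq 0$. Here $p_{kl}\neq 0$ because any two vertices of a directed tree have the root as a common ancestor. If $p_{kl}$ could vanish, taking $j=r$ would produce such a form.
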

\begin{proof}
    \begin{enumerate}[label=(\arabic*), ref=(\arabic*)]
        \item 
        $\Rightarrow$: Suppose $i$ is the root. Let $j$ be a child of $i$, and let $k$ and $l$ be vertices with a unique trek between them that has length two. Such $k$ and $l$ exist since $n\geq 3$ (they are either two different children of the root, or $k=i$ and $l$ is a vertex at height two). Then, $p_{ii} = 1$, $p_{jj} = 1 + \lambda^2$ and $p_{kl} = \lambda^2$. It follows that $-\sigma_{ii}+\sigma_{jj}-\sigma_{kl}\in L_{\GG,\cast}$. 
        Secondly, note that $\deg(p_{jj})\geq 1$ for any $j\neq i$, that is, $i$ is the only vertex such that $p_{ii}=1$. Thus, $p_{ii} - p_{jj}$ yields strictly non-positive coefficients. Since $p_{kl}$ can only have non-negative coefficients, the equation $p_{ii} - p_{jj} - p_{kl} = 0$ is impossible. Thus, no relation of the form $\sigma_{ii} - \sigma_{jj} - \sigma_{kl} \in L_{\GG,\cast}$ exists.
        
        $\Leftarrow:$ If $-\sigma_{ii}+\sigma_{jj}-\sigma_{kl}\in L_{\GG,\cast}$ for some $j,k, l$, then $i$ has smaller height than $j$ since $p_{i, i}$ and $p_{j, j}$ are both polynomials of the form $1+\cdots+\lambda^{2h}$ for some $h$. This ensures there is a vertex $j$ of a strictly greater height and $0 \leq h_{\GG}(i) \leq h(\GG)-1$. 
        
        Now suppose for a contradiction that $i$ has positive height, so $\sigma_{ii}$ corresponds to $[h]^0$ with $1\leq h\leq h(\GG)-1$. 
        For such an element there are exactly two elements in the lower cover: $[h-1]^0$ and $[h-1]^2$ (the latter is guaranteed to exist by the row width bounds established in Proposition \ref{prop: properties of the poset}\ref{item: rows}). % as follows from Proposition \ref{prop: properties of the poset}\ref{item: columns}.
        Thus, this allows us to write $\sigma_{ii} - \sigma_{j'j'} - \sigma_{k'\ell'} \in L_{\GG,c^\ast}$, where $\sigma_{j'j'}$ corresponds to $[0]^0$ and $\sigma_{k'\ell'}$ corresponds to $[h-1]^2$. 
        %
        %\marina{[Suggestion:] Now suppose for a contradiction that $i$ has positive height, so $\sigma_{ii}$ corresponds to $[h]^0$ with $1\leq h\leq h(\GG)-1$. The corresponding polynomial is $p_{ii} = 1 + \lambda^2 + \dots + \lambda^{2h}$. We can split this polynomial into two valid components: $[h-1]^0$ (which evaluates to $1 + \dots + \lambda^{2h-2}$) and $[0]^{2h}$ (which evaluates to $\lambda^{2h}$ and exists by Proposition \ref{prop: properties of the poset}\ref{item: rows}). \textcolor{orange}{ [It is not guaranteed it exists, because it might happen that $2h>2h(\GG)$. is that right?] } This gives the exact algebraic relation $[h]^0 - [h-1]^0 - [0]^{2h} = 0$. Therefore, we can write $\sigma_{ii} - \sigma_{j'j'} - \sigma_{k'\ell'} \in L_{\GG,c^\ast}$ where $\sigma_{j'j'}$ corresponds to $[h-1]^0$ and $\sigma_{k'\ell'}$ corresponds to $[0]^{2h}$.}
        However, by hypothesis, $\sigma_{ii} - \sigma_{jj} - \sigma_{k\ell} \notin L_{\GG,c^\ast}$ for any $j, k, \ell$, so we reach a contradiction. Therefore, $i$ must have height 0, meaning it is the root of $\GG$. 
        \item Since $r$ is the root of $\GG$, $p_{r, r}=1$, and $p_{r, l} = \lambda^{h_{\GG}(l)}$. So, $\sigma_{kk} - \sigma_{rr} - \sigma_{rl}\in L_{\GG,\cast}$ if and only if $p_{k, k} = 1 + \lambda^{h_{\GG}(l)}$. Since $p_{k, k}\in\mathcal{D}$, it has two terms only when $h_{\GG}(\ell) = 2$ so $p_{k, k} = 1 + \lambda^2$. This happens if and only if $k$ is at height $1$, and since $\GG$ is a directed tree, this is true if and only if there is an edge $r\to k$.

        \item If $\sigma_{uv} - \sigma_{rc}-\sigma_{kl}\in L_{\GG, \cast}$, it must be that for any $\lambda$:
        \[
        p_{u, v} - p_{r, c}-p_{k, l}=0.
        \]
        Since $\sigma_{rc}$ corresponds to $[0]^1$, we have that $p_{r, c} = \lambda$. For the resulting polynomial to be zero, the polynomial $p_{u, v}$ must have the term $\lambda$. That implies that $\sigma_{uv}$ corresponds to an elements $[h]^1$ of width one, so the simple trek $S_{uv}$ is of length one, and thus $u$ and $v$ are connected by edge.
    \end{enumerate}
\end{proof}

We now apply the specific linear forms in Lemma~\ref{lem: special polynomials} to show that the topology of $\GG$ is uniquely determined by the linear part of the vanishing ideal of $\mathcal{M}(\GG,c^\ast)$.

\begin{theorem}
\label{thm: poset recovery general directed tree}
    Let $\GG = ([m], E)$ be a directed tree with a constant coloring $c^*$. Then $\GG$ can be recovered from the  degree $1$ component of the vanishing ideal $I_{\varphi_{\GG, c^*}}$ of $\mathcal{V}_{\varphi_{\GG,c^\ast}}$. 
\end{theorem}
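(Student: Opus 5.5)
The approach is to show that the degree-$1$ component of $I_{\varphi_{\GG,c^\ast}}$ identifies, for every pair $\{u,v\}$, the polynomial $p_{u,v}^{(\GG,c^\ast)}$ as an element of $\Q[\lambda]_{\leq 2h(\GG)}$. Once this is done, $u$ and $v$ are adjacent exactly when $p_{u,v}^{(\GG,c^\ast)}$ has lowest degree $1$, and each edge is oriented away from the root. That reconstructs $\GG$.

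The underlying observation is that the degree-$1$ component of $I_{\varphi_{\GG,c^\ast}}$ is $\ker(M^t_{\varphi_{\GG,c^\ast}})$. So the quotient $Q := \mathbb{R}[\sigma]_1 / (I_{\varphi_{\GG,c^\ast}})_1$ is canonically the row space of $M_{\varphi_{\GG,c^\ast}}$, with $\sigma_{ij}\mapsto p_{i,j}^{(\GG,c^\ast)}/\omega$. By Theorem~\ref{thm: basis general directed tree}(2),(3) this space has dimension $2h(\GG)+1$, so it is all of $\mathbb{R}[\lambda]_{\leq 2h(\GG)}$. The only difficulty is that we see $Q$ abstractly, as a space spanned by the classes $\bar\sigma_{ij}$ subject to known relations, and we must locate the monomial basis $1,\lambda,\dots,\lambda^{2h(\GG)}$ inside it using only these relations. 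I would first dispose of the small cases $m\leq 2$ by hand, so that the hypothesis $m\geq 3$ of Lemma~\ref{lem: special polynomials} holds.

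\textbf{Step 1 (root, heights, the even monomials).} Use Lemma~\ref{lem: special polynomials}\ref{item: root} to find the root $r$; then $\bar\sigma_{rr}=1$. The classes $\bar\sigma_{ll}=A_l=1+\lambda^2+\cdots+\lambda^{2h_\GG(l)}$ are pairwise equal exactly for vertices of equal height, so equality in $Q$ partitions $[m]$ into height classes. These classes can be ordered intrinsically. By Lemma~\ref{lem: special polynomials}\ref{item: level 1} the children of $r$ are recognized, which gives $\lambda^2=\bar\sigma_{kk}-\bar\sigma_{rr}$. Inductively, level $h+1$ is the height class whose class differs from level $h$ by a vector of the form $\bar\sigma_{rl}$. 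Indeed, $\bar\sigma_{rl}=\lambda^{h_\GG(l)}$, so the classes $\bar\sigma_{rl}$ are exactly the monomials $\lambda^d$ with $d\leq h(\GG)$, and $\lambda^{2h+2}$ equals such a class only when it is a single monomial. A cleaner route I would try first is simply to order the height classes by the dimension of $\mathrm{span}\{\bar\sigma_{rr},\ldots\}$ grown along chains. In this way all even monomials $\lambda^{2d}=A_{l_d}-A_{l_{d-1}}$ are identified, and so are the heights $h_\GG(l)$.

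\textbf{Step 2 (odd monomials).} For a vertex $l$ at height $d\leq h(\GG)$, the class $\bar\sigma_{rl}$ equals $\lambda^d$. So for odd $d\leq h(\GG)$ we read off $\lambda^d$ directly, since we now know which $l$ lies at height $d$. For odd $d>h(\GG)$, fix a vertex $x_H$ of maximal height $H=h(\GG)$, with path $x_0=r\to\cdots\to x_H$. The path is determined from the heights together with the relation $\bar\sigma_{x_{i}x_{j}}=\lambda^{j-i}A_{x_i}$; that relation certifies ancestry, and this certification is the delicate point. I would then solve triangularly: $\lambda^{j-i}A_{x_i}$ has top degree $j+i$, and since all lower monomials are already known, $\lambda^{j+i}$ is obtained for every $i+j\leq 2H$. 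Ancestry is detected as follows: $u$ is an ancestor of $v$ iff $\bar\sigma_{uv}=\lambda^{h(v)-h(u)}A_u$ in $Q$, and the right-hand side is an already-identified element when we proceed by increasing degree.

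\textbf{Step 3 (edges).} With the monomial basis of $Q$ in hand, expand each $\bar\sigma_{uv}$ in it. Then $u\sim v$ iff the coefficient of $\lambda$ is $1$, equivalently iff $w(p_{u,v})=1$. This is consistent with Lemma~\ref{lem: special polynomials}\ref{item: special polynomials}. Orienting each edge from lower to higher height recovers $\GG$.

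I expect the main obstacle to be Step~2, namely certifying intrinsically which vertices lie on a common root-to-leaf path before the odd high-degree monomials are known. An alternative I would fall back on is to note that ancestry of $u$ over $v$ is equivalent to $\bar\sigma_{uv}\in\lambda^{h(v)-h(u)}\cdot\mathrm{span}(\text{even monomials})$. This can be tested using only the already-identified even monomials and low odd monomials, because the parity of $\bar\sigma_{uv}$ is $h(v)-h(u)$ and its lowest term has degree at most $H$.
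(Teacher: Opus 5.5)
\textbf{There is a genuine gap.} Your reduction is correct as far as it goes. We have $(I_{\varphi_{\GG,c^\ast}})_1=\ker(M^t_{\varphi_{\GG,c^\ast}})$, so $Q\cong\mathbb{R}[\lambda]_{\le 2h(\GG)}$ via $\bar\sigma_{ij}\mapsto p^{(\GG,c^\ast)}_{i,j}/\omega$. Your Step~3 does recover $\GG$ once the monomial basis has been located inside $Q$ using only the relations. The problem is that this location is never achieved. It requires the heights of all vertices, or at least a root-to-deepest-leaf path, to be known \emph{before} any edge is known, and none of your certificates provides this.

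\emph{Step~1.} The difference $A_{h+1}-A_h=\lambda^{2h+2}$ has the form $\bar\sigma_{rl}=\lambda^{h_\GG(l)}$ only when $2h+2\le h(\GG)$. So your induction stops at height $\lfloor h(\GG)/2\rfloor$. The deeper problem is that Step~1 uses only the vectors $\bar\sigma_{ll}$ and $\bar\sigma_{rl}$. Apart from identifying vertices of equal height, the only linear relations among them are $\lambda^{2e}=A_e-A_{e-1}$ with $2e\le h(\GG)$. For the path $x_0\to x_1\to\cdots\to x_5$, exchanging $x_3$ and $x_5$ preserves all of these relations. Hence no ordering procedure built from these vectors can separate heights $3$ and $5$, including your ``dimension of spans along chains'' idea.

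\emph{Step~2.} This step is circular. To certify $\bar\sigma_{x_ix_j}=\lambda^{j-i}A_{x_i}$ you need $\lambda^{i+j}$, which is exactly the monomial this relation is supposed to produce.

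\emph{Fallback test.} This test is false. Suppose $h(u)\le h(v)$ and $\mathrm{top}(S_{uv})$ has height $t<h(u)$. Then $\bar\sigma_{uv}=\lambda^{h(v)-h(u)}\cdot\lambda^{2(h(u)-t)}A_t$, which also lies in $\lambda^{h(v)-h(u)}\cdot\mathrm{span}(\text{even monomials})$. So the test accepts every pair and never detects ancestry.

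\textbf{The missing idea} is a way to detect edges that needs only one identified vector and no coordinate functionals. The paper takes $\lambda=\bar\sigma_{rc}$ for a child $c$ of the root, found via Lemma~\ref{lem: special polynomials}\ref{item: level 1}. It then tests whether $\sigma_{uv}-\sigma_{rc}-\sigma_{kl}\in L_{\GG,c^\ast}$.
\begin{itemize}
\item Because $p^{(\GG,c^\ast)}_{k,l}$ has nonnegative coefficients and $p^{(\GG,c^\ast)}_{u,v}$ is a $0/1$ polynomial, such a relation forces $\lambda$ to appear in $p^{(\GG,c^\ast)}_{u,v}$. Then $w(p_{u,v})=1$, so $u$ and $v$ are adjacent (Lemma~\ref{lem: special polynomials}\ref{item: special polynomials}).
\item Conversely, an edge $u\to v$ with $1\le h_\GG(u)=h\le h(\GG)-2$ produces such a relation, because $[h-1]^3$ is realized by a trek along a longest path.
\end{itemize}
This positivity argument replaces your need to expand vectors in the monomial basis, and it yields the heights layer by layer from the root.

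Edges out of height $h(\GG)-1$ still need separate treatment, because $[h(\GG)-2]^3$ may be a meet element. The paper handles these either by noting that all $\sigma_{uw}$ with $u\in U$, $w\in W$ coincide, or through its six-term relation. Once the edges are known, your Steps~2--3 become valid, but by then the tree has already been recovered.
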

\begin{proof}
   % By Proposition \ref{prop: properties of the poset}, the number of elements in column corresponding to $w=0$ is $h(\GG)$, corresponding to $w=1$ is $h(\GG)-1$ Also, corresponding to $2$ is  at least $h(\GG)-2$ and to $w=3$ at least $h(\GG)-3$. Moreover, all these elements correspond to some $\sigma_{ij}$, i.e. they are not meet elements, which follows from the proof of Proposition \ref{prop: properties of the poset}. 

    %Thus, we can use the first three steps of the algorithm from Theorem \ref{thm: poset recovery} to recover the whole graph except, possibly, some edges pointing to the leaves at height $h(\GG)$. This is because all of the elements used in these steps correspond to some $\sigma_{ij}$, thus the linear polynomials used belong to the vanishing ideal.
    %\textcolor{red}{L request: Please insert the steps from Theorem~\ref{thm: poset recovery} into this more general proof.}

We prove this by constructively recovering the directed tree $\GG$, starting from the root. Since a directed tree is uniquely defined by its directed edges, and the poset $P_{\varphi_{\GG,c^\ast}}$ is entirely determined by the structure of $\GG$, recovering all edges of $\GG$ completes the proof. We may assume $m \geq 3$, as trees with $m < 3$ are trivial and easily identifiable.

\begin{itemize}
    \item \textit{Finding the root.}
    By Lemma \ref{lem: special polynomials}\ref{item: root}, a node $r\in[m]$  is the root of $\GG$ if and only if there exists nodes $j,k,l\in [m]$ such that $\sigma_{rr} - \sigma_{jj} + \sigma_{kl} \in L_{\GG,\cast}$ but no relation $\sigma_{rr} - \sigma_{jj} - \sigma_{kl} \notin L_{\GG,c^\ast}$ exists. This uniquely identifies the root $r$, and the corresponding variable $\sigma_{rr}$ corresponds to the poset element $[0]^0$.

    \item \textit{Edges at height 1.}
    In this step we identify the immediate children of the root $r$. By Lemma \ref{lem: special polynomials}\ref{item: level 1}, there is a directed edge $r \to k$ if and only if $\sigma_{kk} - \sigma_{rr} - \sigma_{rl} \in L_{\GG,c^\ast}$ for some $l \in [m]$. 
    Let $c$ be one such child, then $\sigma_{r c}$ corresponds to the element $[0]^1$. If no such $k$ and $l$ exist, then $h(\GG) = 1$, meaning all other vertices are leaves connected directly to $r$, and the graph is fully recovered. 

    \item \textit{Internal edges.} 
    We proceed inductively by height. For each vertex at height $h$ we will identify the outgoing edges to its children. Assume we have successfully recovered all outgoing edges up to height $h-1$. Let $u$ be a vertex at height $h$ and let $c$ be any confirmed child of the root $r$ (meaning $\sigma_{rc}$ corresponds to $[0]^1$ and $p_{rc} = \lambda$).

    To find the children of $u$, we look for all nodes $v$ such that $\sigma_{uv} - \sigma_{r c} - \sigma_{kl} \in L_{\GG,c^\ast}$ for some $k, l\in[m]$. 
    By Lemma \ref{lem: special polynomials}\ref{item: special polynomials} this relation implies that there is an edge $u \to v$. If the relation holds for some $v, k, l$, it implies that $p_{u,v}$ is of the form $\lambda + \lambda^3 + \cdots + \lambda^{2h+1}$ and then $\sigma_{uv}$ corresponds to the poset element $[h]^1$. Moreover, since $p_{r,c} = \lambda$ then $p_{k,l} = \lambda^3 + \cdots + \lambda^{2h+1}$ which means that $\sigma_{kl}$ corresponds to the element $[h-1]^3$.
    By Proposition \ref{prop: properties of the poset}\ref{item: columns}, the number of elements in the column corresponding to width $3$ is at least $h(\GG)-3$. So, if $h \leq h(\GG) - 2$, it is guaranteed that $[h-1]^3\in P_{\varphi_{\GG, \cast}}$ and it is realized in the graph. Thus, this step allows us to recover all edges originating from vertices up to, at least, height $h(\GG) - 2$. 

    If the relation does not hold for any $v, k, l$, it implies that either there is no edge between $u$ and $v$, or vertex $u$ has height $h(\GG)-1$ and there is no $\sigma_{k,l}$ corresponding to $[h(\GG)-2]^3$.
  
    \item \textit{External edges.} 
    The only remaining unidentified edges are those originating from vertices at height $h(\GG) - 1$. However, in this case, the 3-term relation from the previous step may fail because the diagonal element $[h(\GG)-2]^3$ might not exist in $P_{\varphi_{\GG, c^*}}$. 
    
    Let $U$ denote the set of vertices at height $h(\GG) - 1$ and $W$ the set of leaves at height $h(\GG)$. 
    For any pair $u\in U$, $w\in W$ (not necessarily connected), it is straightforward that the trek polynomial $p_{u,w}$ has degree $2(h(\GG)-1) + 1 = 2h(\GG)-1$. Consequently, every such $\sigma_{uw}$ corresponds to an element of the diagonal $[h(\GG)-1-k]^{1+2k}$ for some $k\geq 0$.
    An edge $u\to w\in E$ exists if and only if $\sigma_{uw}$ corresponds strictly to the top element of this diagonal, $[h(\GG)-1]^1$ (where $k=0$).
    
    %If $u\in U$, $w\in W$ and $u\to w\in E$, then $\sigma_{uw}$ corresponds to $[h(\GG)-1]^1$. 
    %Moreover, 
    If all variables $\sigma_{uw}$ for any $u\in U$, $w\in W$ are equivalent in $L_{\GG,\cast}$, they must all correspond to $[h(\GG)-1]^1$, and we can uniquely identify the remaining edges $u \to w$.
    %
    %However, the 3-term relation from the previous step may fail because the diagonal element $[h(\GG)-2]^3$ might not exist in $P_{G, c^*}$. \\
    %
    %Otherwise, if they are not all equivalent, it is straightforward to see that that for any pair $u\in U$, $w\in W$ (not necessarily connected), the trek polynomial $p_{u,w}$ has degree $2(h(\GG)-1) + 1 = 2h(\GG)-1$, and hence, such $\sigma_{uw}$ corresponds to an element of the diagonal $[h(\GG)-1-k]^{1+2k}$ for some $k\geq 0$.\\ 
    %
    Otherwise, if they are not all equivalent, they correspond to different elements of the poset on that diagonal. %of the same degree. 
    In this case, as illustrated in Figure~\ref{fig: external edges}, $[h(\GG)-1]^1$ strictly covers two elements, the element below, $[h(\GG)-2]^1$, and the next element down on the diagonal.
    %and there is a linear constraint of type (2) from 
    By Theorem \ref{thm: basis general directed tree} this generates a linear constraint of type (2) in $I_{P_{\GG, c^*}}$ coming from the following relation:
    \[
    \begin{split}
    [h(\GG)-1]^1 - [h(\GG)-2]^1 - [h(\GG)-1-a]^{1+2a} \\+ [h(\GG)-1-a]^{1+2a}\wedge [h(\GG)-2]^1=0,
    \end{split}
    \]
    where $a\geq 2$ is the smallest number such that $[h(\GG)-1-a]^{1+2a}\in P_{\GG, c^*}$.
    
    The elements $[h(\GG)-2]^1$ and $[h(\GG)-1-a]^{1+2a}$ are realizable {by assumption}. %must correspond to some $\sigma_{ij}$ and $\sigma_{kl}$. 
    However, the meet element $[h(\GG)-1-a]^{1+2a}\wedge [h(\GG)-2]^1$ might be not be realized in $\GG$.
    Note that 
    \[
    [h(\GG)-1-a]^{1+2a}\wedge [h(\GG)-2]^1 = [h(\GG)-2-a]^{2a+1}, 
    \]
    and therefore the following equality holds:
    \begin{equation}
        \label{eqn: meetelem}
        [h(\GG)-2-a]^{2a+1} -[h(\GG)-1-a]^{2a-1} + [0]^{2a-1}=0.
    \end{equation}
    Moreover,
    \[
     [0]^{2a-1} = [a-1]^1 - [a-2]^1.
    \]
    Hence, Equation \eqref{eqn: meetelem} can be written as
    \[\begin{split}
    [h(\GG)-1]^1 - [h(\GG)-2]^1 - [h(\GG)-1-a]^{1+2a} \\+[h(\GG)-1-a]^{2a-1} -  [a-1]^1 + [a-2]^1=0,
    \end{split}
    \] where all the elements are realizable.
    
    Choose some $\sigma_{ij}$ corresponding to $[h(\GG)-2]^1$, recovered in one of the previous steps. To determine if there is an edge $u\to w$ for $u\in U$ and $w\in W$, find $u'\in U$ and $w' \in W$, such that 
    $$\sigma_{uw} - \sigma_{ij} - \sigma_{u'w'} + \sigma_{kl} - \sigma_{yz} + \sigma_{xy}\in L_G$$ for some $ k, l \in[m]$ that do not form an edge (i.e. they were not a previously recovered edge, nor a pair in $U \times W$) and vertices $x,y,z\in [m]$, s.t. $x\to y\to z$ is a directed path in $\GG$ with $h_{\GG}(x) = a-2$. By the construction above, such vertices $u', w', k, l, x, y, z$ will exist if there is an edge $u\to w$.

    If this relation holds, it algebraically translates to $p_{uw} - p_{ij} - p_{u'w'} + p_{kl} - \lambda^{2a-1} = 0$. Because $a \ge 2$, the lowest degree of $\lambda^{2a-1}$ is at least $\lambda^3$. Following the underlying logic of Lemma \ref{lem: special polynomials}\ref{item: special polynomials}, the only element that can provide the term $+\lambda$ needed to cancel the $-\lambda$ from $-p_{ij}$ is $p_{uw}$ (since the lowest-degree term in $p_{kl}$ is also at least $\lambda^3$). Thus, $\sigma_{uw}$ must correspond to the width-1 element $[h(\GG)-1]^1$, confirming $u \to w$ is an edge.
    \end{itemize}    
\end{proof}
\begin{figure}[t]
    \centering
            \begin{tikzpicture}[xscale=1.5, yscale=0.6]
                % Nodes odd
                \node (1) at (-1,   11) {\footnotesize$[h(\GG)-1]^1$};
                \node (2) at (-1,   9) {\footnotesize$[h(\GG)-2]^1$};
                \node (3) at (-6,   6) {\footnotesize$[h(\GG)-1-a]^{2a+1}$};
                \node (4) at (-6,   4) {\textcolor{red}{\footnotesize$[h(\GG)-2-a]^{2a+1}$}};
                \node (5) at (-4, 6) {\footnotesize$[h(\GG)-1-a]^{2a-1}$};
                \node (6) at (-4, 2.5) {\textcolor{red}{\footnotesize$[0]^{2a-1}$}};
                \node (7) at (-1,  5.5) {\footnotesize$[a-1]^1$};
                \node (8) at (-1, 3.5) {\footnotesize$[a-2]^1$};
            
                % Edges (Cover Relations)
                \draw (1) edge (2);
                \draw[dashed] (1) edge (3);
                \draw[dashed] (2) edge (5);
                \draw (3) edge (4);
                \draw (5) edge (4);
                \draw[dashed] (5) edge (6);
                \draw[dashed] (7) edge (6);
                \draw (7) edge (8);
                \draw[dashed] (2) edge (7);
            \end{tikzpicture}
    \caption{Illustration of construction from the proof of Theorem \ref{thm: poset recovery general directed tree}. Highlighted with red are elements that possibly are meet elements.}
    \label{fig: external edges}
\end{figure}

\begin{corollary}
\label{cor: linearly distinct}
    For any directed trees $\HH\neq\GG$, $\ker(M^t_{\varphi_{\GG, c^*}}) \neq \ker(M^t_{\varphi_{\HH, c^*}})$; i.e. the varieties $\mathcal{V}_{\varphi_{\GG,c^\ast}}$ and $\mathcal{V}_{\varphi_{\HH,c^\ast}}$ differ in their linear span.
\end{corollary}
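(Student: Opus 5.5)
The corollary follows from Theorem~\ref{thm: poset recovery general directed tree} once we note that the degree~$1$ component of $I_{\varphi_{\GG,c^\ast}}$ is exactly $\ker(M^t_{\varphi_{\GG,c^\ast}})$. Concretely, $\varphi_{\GG,c^\ast}(\theta) = M_{\varphi_{\GG,c^\ast}}\phi_{\mathcal T}(\theta)$. A linear form $\ell(\Sigma) = \sum_{i\le j}\beta_{ij}\sigma_{ij}$ vanishes on $\mathcal V_{\varphi_{\GG,c^\ast}}$ if and only if $\beta^t M_{\varphi_{\GG,c^\ast}}\phi_{\mathcal T}(\theta) = 0$ identically in $\theta$. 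Since the entries of $\phi_{\mathcal T}$ are distinct monomials, this holds if and only if $M^t_{\varphi_{\GG,c^\ast}}\beta = 0$. The first step is therefore to record the identification
\[
(I_{\varphi_{\GG,c^\ast}})_1 = \Bigl\{\textstyle\sum_{i\le j}\beta_{ij}\sigma_{ij} : \beta\in\ker(M^t_{\varphi_{\GG,c^\ast}})\Bigr\},
\]
and to observe that the linear span of $\mathcal V_{\varphi_{\GG,c^\ast}}$ is the common zero set of this space of forms. Hence equality of linear spans is equivalent to equality of the kernels.

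Both trees live in the same ambient space $\mathbb R^{\binom{m+1}{2}}$ with coordinates $\sigma_{ij}$ labeled by the common vertex set $[m]$, so we may compare the kernels directly. Suppose, for contradiction, that $\ker(M^t_{\varphi_{\GG,c^\ast}}) = \ker(M^t_{\varphi_{\HH,c^\ast}})$. Then the degree~$1$ components of the two vanishing ideals coincide as subspaces of $\mathbb R[\sigma_{ij}]$.

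The construction in the proof of Theorem~\ref{thm: poset recovery general directed tree} uses only membership queries of the form ``is this particular linear form in the degree~$1$ component?''. It finds the root via Lemma~\ref{lem: special polynomials}\ref{item: root}, the children of the root via \ref{item: level 1}, internal edges via \ref{item: special polynomials}, and the edges into height $h(\GG)$ via the six-term relation. Run on identical input, it therefore returns the same edge set for both trees, which forces $E(\GG) = E(\HH)$ and so $\GG = \HH$, a contradiction.

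The one point needing care is that the recovery procedure takes the input space $L$ as given and does not already use knowledge of $\GG$. For instance, the parameter $a$ in the final step must itself be readable from $L$. I would handle this by phrasing each step as an existential search over vertex tuples against $L$; the proof of Theorem~\ref{thm: poset recovery general directed tree} shows that such a search succeeds precisely on the true edges. I would also dispatch the small cases $m<3$ directly. For $m=1$ there is only one tree. For $m=2$, the tree $1\to 2$ has $\sigma_{11}=\omega$ and $\sigma_{22}-\sigma_{11}=\omega\lambda^2$, whereas for $2\to 1$ the roles are swapped, so the form $\sigma_{22}-\sigma_{11}$ is not in either kernel, but each kernel is spanned by a different linear form? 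Actually $\dim\ker = 3-3 = 0$ here by Theorem~\ref{thm: basis general directed tree}(3), so both kernels are zero and the corollary fails for $m=2$. I expect the statement therefore implicitly requires $m\ge 3$, as the recovery theorem does, and I would state that assumption explicitly.
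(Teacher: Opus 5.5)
Your argument is correct and follows the paper's own route: the corollary is presented as an immediate consequence of Theorem~\ref{thm: poset recovery general directed tree} once the degree-$1$ part of $I_{\varphi_{\GG,c^\ast}}$ is identified with $\ker(M^t_{\varphi_{\GG,c^\ast}})$, and your worry about $a$ is harmless, since $h(\GG)$ is read off from $\dim\ker(M^t_{\varphi_{\GG,c^\ast}}) = \binom{m+1}{2}-2h(\GG)-1$ and the six-term search can range over all recovered directed paths $x\to y\to z$ (the $\lambda$-coefficient argument only uses $h_{\GG}(y)\geq 1$). Your $m=2$ observation is also right and exposes a genuine omission in the statement, and in the paper's remark that trees with $m<3$ are easily identifiable: for $1\to 2$ and $2\to 1$ the matrix $M_{\varphi}$ is an invertible $3\times 3$ matrix, so both kernels are $\{0\}$ and the hypothesis $m\geq 3$ must be added; you should also delete the stray remark about $\sigma_{22}-\sigma_{11}$.
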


Theorem~\ref{thm: poset recovery general directed tree} solves Problem~\ref{prob: package I}\eqref{prob: distinguishability} for the family of colored DAG varieties $\mathcal{F}_{\textrm{$c^\ast$-TREE}}$.  
Since the colored DAG model $\mathcal{M}(\GG,c^\ast)$ is Zariski dense in $\mathcal{V}_{\varphi_{\GG,c^\ast}}$, it follows that the corresponding colored DAG models in $\mathcal{M}_{\textrm{$c^\ast$-TREE}}$ are also non-equal.  
This solves the  model distinguishability problem \cite[Problem 1.4]{garrote2026unirational} for the family of colored DAG models  $\mathcal{M}_{\textrm{$c^\ast$-TREE}}$. 
Corollary~\ref{cor: linearly distinct} additionally solves the linear equivalence problem \cite[Problem~1.5]{garrote2026unirational} for the same family.  

\subsection*{Concluding remarks}
Here, we characterized the posets $\mathcal{P}_{\textrm{$c^\ast$-TREE}} =\{P_{\varphi_{\GG,c^\ast}} : \GG \textrm{ a directed tree}\}$, and applied the characterization to derive a complete solution to Package~\ref{prob: package I} for the family of colored DAG varieties $\mathcal{F}_{\textrm{$c^\ast$-TREE}} =\{\mathcal{V}_{\varphi_{\GG,c^\ast}} : \GG \textrm{ a directed tree}\}$.  As a statistical consequence of these results, we obtained a proof of structural identifiability for the family of colored Gaussian DAG models $\mathcal{M}_{\textrm{$c^\ast$-TREE}} =\{\mathcal{M}(\GG,c^\ast) : \GG \textrm{ a directed tree}\}$.  
A natural effort would be to generalize this story to the full family of colored DAG varieties $\mathcal{F}_{\textrm{$c$-DAG}} =\{\mathcal{V}_{\varphi_{\GG,c}} : (\GG,c) \textrm{ a colored DAG}\}$.  

In particular, it would be nice to see a complete characterization of the posets $\mathcal{P}_{\textrm{$c^\ast$-DAG}} =\{P_{\varphi_{\GG,c^\ast}} : \GG \textrm{ a DAG}\}$, since this would complete our understanding of the behavior of these varieties under the extremal colorings:  $c^\circ$, the uncoloring and $c^\ast$, the constant coloring. 
It also appears that there is interesting combinatorics to be parsed in this setting.  
For instance, it would be interesting to understand how the notions of the height poset $\mathcal{C}_{\GG}$, width vectors $w_{\GG}$ and the operations on them that combine to yield $P_{\varphi_{\GG, c^\ast}}$ generalize to the constant coloring family $\mathcal{P}_{\textrm{$c^\ast$-DAG}}$. 
Notably, comparing Corollary~\ref{cor: simplified poset characterization} with Theorem~\ref{thm: poset characterization}, we see that a characterization of the colored DAGs that are $\pi$-graphs appears to simplify the problem of characterizing $P_{\varphi_{\GG, c^\ast}}$, even when $(\GG, c^\ast)$ is not a $\pi$-graph.  Hence, aside from its computational, algebraic implications, there is combinatorial motivation for characterizing the elements of $\mathcal{P}_{\textrm{$c^\ast$-DAG}}$ that are $\pi$-graphs.  

\subsection*{Data availability statement}
All computational examples presented in this paper can be verified using the Julia code available at our GitHub repository: \url{https://github.com/marinagarrote/poset-parametrizations}.

\subsection*{Acknowledgements}
M.~G-L. was partially funded by the Beatriu de Pinós postdoctoral programme of the Department of Research and Universities of the Generalitat de Catalunya (ref. 2024BP00235).
N.~K. was partially supported by the research visit grant awarded by the Foundation for Aalto University Science and Technology.
L.~S. was partially supported by the 3-year Prize for Young Researchers awarded by the G\"oran Gustafsson Foundation, a Mercator Fellowship funded by the DFG priority programme SPP 2458 \emph{Combinatorial Synergies}, and the KTH Center for Digital Futures.
L.~S. and M.~G-L. were partially supported by the Wallenberg AI, Autonomous Systems and Software Program (WASP) funded by the Knut and Alice Wallenberg Foundation.

\bibliographystyle{plainnat}
\bibliography{main}

\end{document}